\numberwithin{equation}{section}
\theoremstyle{plain}
\newtheorem{theorem}{Theorem}[section]
\newtheorem{lemma}[theorem]{Lemma}
\newtheorem{remark}[theorem]{Remark}
\newtheorem{corollary}[theorem]{Corollary}
\newtheorem{definition}[theorem]{Definition}
\newtheorem{proposition}[theorem]{Proposition}
\newtheorem{example}[theorem]{Example}
\xdef\csname cal\x\endcsname{\noexpand\ensuremath{\noexpand\mathcal{\x}}}
\xdef\csname bb\x\endcsname{\noexpand\ensuremath{\noexpand\mathbb{\x}}}
\xdef\csname frak\x\endcsname{\noexpand\ensuremath{\noexpand\mathfrak{\x}}}
\xdef\csname scr\x\endcsname{\noexpand\ensuremath{\noexpand\mathscr{\x}}}
\xdef\csname rm\x\endcsname{\noexpand\ensuremath{\noexpand\mathrm{\x}}}
\newcommand{\catO}{\calO}
\newcommand{\Hom}{\mathrm{Hom}}
\newcommand{\End}{\mathrm{End}}
\newcommand{\Span}{\mathrm{span}}
\newcommand{\Res}{\mathrm{Res}}
\newcommand{\Ind}{\mathrm{Ind}}
\newcommand{\Spec}{\mathrm{Spec}}
\newcommand{\inprod}[2]{({#1, #2^\vee})}
\newcommand{\pr}{\mathrm{pr}}
\newcommand{\ch}{\mathrm{ch}}
\newcommand{\height}{\mathrm{ht}}
\newcommand{\rmp}{\mathrm{p}}
\newcommand{\Sing}{\mathrm{Sing}}
\newcommand{\Tor}{\mathrm{Tor}}
\newcommand{\val}{\mathrm{val}_T}
\newcommand{\bfk}{\mathbf{k}}
\newcommand{\zza}{Z_{\zeta,0}}
\newcommand{\afun}{\mathbf{a}}
\newcommand{\Wch}{\widetilde{W}}
\newcommand{\Stab}{\mathrm{Stab}}
\newcommand{\bfF}{\mathbf{F}}
\newcommand{\Ki}{K_{\alpha_i}}
\newcommand{\inv}{^{-1}}
\newcommand{\widesim}[2][1.5]{
	\mathrel{\overset{#2}{\scalebox{#1}[1]{$\sim$}}}
}
\begin{document}
	
	\title[Dimension growth and Gelfand-Kirillov dimension]{Dimension growth and Gelfand-Kirillov dimension of representations of quantum groups} 
	\author{Vyacheslav Futorny, Xingpeng Liu}
	\address[V. Futorny]{Shenzhen International Center for Mathematics, Southern University of Science and Technology, Shenzhen, Guangdong, P. R. China}
	\email{vfutorny@gmail.com}
	\address[X. Liu]{Shenzhen International Center for Mathematics, Southern University of Science and Technology, Shenzhen, Guangdong, P. R. China}
	\email{xpliu127@ustc.edu.cn}
	\subjclass[2020]{Primary: 17B37, 17B10; Secondary: 20G42, 16T20}
	\keywords{Quantum group, representation, dimension growth, Gelfand-Kirillov dimension}
	
	\begin{abstract}
		We consider two algebraic invariants in the representation theory of quantized enveloping algebras: the dimension growth of simple modules for the De Concini-Kac quantum group at roots of unity, and the Gelfand-Kirillov dimension of simple highest weight modules for the quantum group at generic~$q$. In spite of being defined for different values of the parameter~$q$, these invariants  reflect closely related features in the respective contexts. We show that several new phenomena appear in the quantum case and the
        representations with non-integral weights contribute to both invariants in a way that cannot be ignored. Building on this, we determine the minimal non-zero value of these invariants for each Lie type. As an application we show that quantum cuspidal modules at generic~$q$ can occur only when the underlying semisimple Lie algebra has simple components of type~$A$, $B$, or $C$, providing a more explicit representation-theoretic distinction with the classical case.
	\end{abstract}

	\maketitle
	\setcounter{tocdepth}{1}
	\tableofcontents
	
	\section{Introduction}
	
	Let $\rmG$ be a connected semisimple algebraic group over $\bbC$, and let $\mathfrak g$ be its Lie algebra. The present paper is concerned with the representation theory of two closely related quantum analogues of the classical enveloping algebras: the \emph{Drinfeld–Jimbo quantum group} $U_q := U_q(\mathfrak g)$, where $q$ is either a formal parameter or non-root of unity (the generic case), and the \emph{De Concini-Kac specialization} $U_\zeta$ at roots of unity. These may be viewed, respectively, as quantum counterparts of the universal enveloping algebra $U(\mathfrak g)$ in characteristic $0$ and of the universal enveloping algebra $U(\mathfrak g_k)$ for a restricted Lie algebra $\mathfrak g_k:=\frakg_\bbZ\otimes_\bbZ k $ over an algebraically closed field $k$ of characteristic $p$. It is known that the representation-theoretic phenomena in the classical settings of characteristics $0$ and $p$ mirror, to a large extent, those in the quantum settings of generic $q$ and roots of unity, respectively. The aim of this paper is to use two parallel algebraic invariants, the \emph{Gelfand–Kirillov dimensions} of simple $U_q$-modules and the \emph{dimension growth} of simple $U_\zeta$-modules (see Section \ref{DimensionGrowth}),  which play analogous roles in their respective contexts, to explain certain discrepancies between the quantum and classical pictures. 
	
	Let $\rmT$ be a maximal torus of $\rmG$, and let $U_q^0$ (resp. $U_\zeta^0$) be the Cartan part of $U_q$ (resp. $U_\zeta$). Both algebras are Laurent polynomial algebras over $\bbC$ or $\bbC(q)$ with $q$ a formal parameter, naturally isomorphic to the coordinate algebra of $\rmT$ (equivalently, to the group algebra of its character group $X(\rmT)$),  which is independent  of the value of $q$. Let $Z_{\zeta,0}$ be the Frobenius center of $U_\zeta$. It is a central subalgebra of $U_\zeta$. By \cite{DKP92}, the subalgebra $Z_{\zeta,0}$, as a Hopf subalgebra of $U_\zeta$, can be identified with the coordinate algebra of the algebraic group 
	\[\rmH = \left\{(a^-b, b\inv a^+)\in \rmB^-\times \rmB^+ \bigm| a^{\pm}\in \rmU^{\pm}, b\in \rmT \right\}, \]
	where $\rmB^+$ and $\rmB^-$ are opposite Borel subgroups of $\rmG$ satisfying  $\rmB^+\cap \rmB^- = \rmT$, a fixed maximal torus of $\rmG$, and $\rmU^{\pm}$ denote the unipotent radicals of $\rmB^{\pm}$. As a result, $\rmH$ is independent 
     of the choice of the root of unity $\zeta$, and any element of $\rmH$  defines a central character of $Z_{\zeta,0}$ for any fixed $\zeta$. 
	Let $g\in \rmH$, and denote by $\chi_g$ the corresponding central character of $Z_{\zeta,0}$. Define $U_\zeta^{\chi_g}$ (or simply $U_\zeta^{g}$) to be the quotient algebra of $U_\zeta$ by the ideal generated by $z-\chi_g(z)$ for $z\in Z_{\zeta,0}$. Define $\pi:\rmH \rightarrow \rmG$ by $\pi(x,y) = xy\inv$. By \cite{DKP92}, the algebras $U_\zeta^g$ and $U_\zeta^{g'}$ are isomorphic if $\pi(g)$ and $\pi(g')$ belong to the same conjugacy class of $\rmG$. Hence, for each conjugacy class in $\rmG$ we may choose a representative lying in $\pi(\rmH)$ and the representation theory of $U_\zeta^g$ then reduces to the situation where $\pi(g)$ is one of these chosen representatives.
	
	To clarify how dimension growth and the Gelfand–Kirillov dimension exhibit similar features in their respective contexts, we take as an example the central reduction algebra $U_\zeta^g$ with $\pi(g)$ lying in a semisimple conjugacy class of $\rmG$. In particular, when $g$ is the identity element $1$ in $\rmH$, the algebra $U_\zeta^1$, usually denoted by $u_\zeta$, is the \emph{small quantum group} introduced by Lusztig \cite[Section 8]{Lus90}. The above isomorphism implies that, for our purposes, it is enough to work uniformly with the algebra
	$\hat{u}_\zeta = u_\zeta^-\; U_\zeta^0\; u_\zeta^+ $, where $u_\zeta^\pm$ denote the positive and negative parts of $u_\zeta$, respectively. 
	Regard $\rmT$ as the \emph{maximal spectrum} $\Spec( U_\zeta^0)$ of $U_\zeta^0$. An element $\Lambda$ in $\rmT$ defines an irreducible representation of $\hat{u}_\zeta = u^-_\zeta\; U_\zeta^0\;u_\zeta^+$, namely the simple quotient of the baby Verma module of highest weight $\Lambda$, which we denoted by $L_\zeta(\Lambda)$. Let $\ell$ be the order of the root of unity and $\zeta = e^{2\pi \imath/\ell}$. We will analyze how the dimension of the irreducible representation $L_\zeta(\Lambda)$ varies as $\ell$ changes. It turns out that $\dim L_\zeta(\Lambda)$ is a polynomial function of $\ell$, and moreover,
	\[\lim_{\ell\to\infty} \log_\ell \dim L_\zeta(\Lambda) \;=\; \mathrm{GK\text{-}dim}_{U_q} L_q(\Lambda),\] 
	where $L_q(\Lambda)$ denotes the irreducible highest weight $U_q$-module of highest weight $\Lambda$, and $\mathrm{GK\text{-}dim}_{U_q} L_q(\Lambda)$ denotes its  {Gelfand-Kirillov dimension} (Theorem \ref{mainthm}). This dimension-growth behavior is consistent with the classical situation \cite{BL18}.  The equality provides a precise correspondence between the generic and root-of-unity cases in the representation theory of quantum groups, where the degree of the dimension polynomial plays a role analogous to the Gelfand–Kirillov dimension in the generic setting. Nevertheless, we  further clarify the differences from the classical case by carrying out explicit computations of the dimension growth.
	
	Let $q$ be a formal parameter. We now turn to representations of $U_q$ that are \emph{weight-graded} by $\rmT_{\bbC(q)} (\cong \Spec U_q^0)$ via the action of $U_q^0$. 
	Denote by $\frakh_\bbQ^*$ the $\bbQ$-span of the fundamental weights of $\frakg$. For each $\lambda\in \frakh_\bbQ^*$ one may define $q^\lambda$ as a weight in $\rmT_{\bfk}$ where $\bfk$ is the algebraic closure of $\bbC(q)$. Following Joseph and Letzter \cite{JL95}, we refer to such weights as \emph{linear weights}. It is well-established that the simple quotient of the Verma module with highest weight $q^\lambda$ remains simple upon the specialization $q\mapsto 1$ (see \cite{EK08} and \cite{AM12}). For these simple modules, the corresponding primitive ideals and Gelfand-Kirillov dimensions coincide with those of their classical counterpart. However, when the highest weight $\Lambda \in \rmT_{\bfk}$ is not linear, it remains unclear what the precise classical counterpart of the corresponding representation should be, and such a counterpart may in fact fail to exist.
	
	Consider the Serre subcategory of $U_q$-modules generated by all irreducible highest weight modules $L_q(\Lambda)$ with $\Lambda \in \rmT_{\bbC(q)}$ (or more general $\Lambda\in \rmT_{\bfk}$), denoted by $\mathcal{O}_q$. It is a reasonable quantum analogue of the classical Bernstein-Gelfand-Gelfand (BGG) category $\mathcal{O}(\frakg)$ associated with the Lie algebra $\mathfrak{g}$,  that contains all irreducible highest weight modules. This category shares many structural features with $\mathcal{O}(\frakg)$. In the classical case, Soergel \cite{Soe90} proved that for any weight $\lambda$ in the dual space $\frakh^*$ of the Cartan subalgebra $\frakh$ of $\frakg$, the indecomposable block $\catO_{\overline{\lambda}}(\frakg)$ of $\catO(\frakg)$ depends only on the ``integral'' Coxeter group $(W_\lambda, S_\lambda)$ relative to the weight $\lambda\in \frakh^*$ (up to group isomorphism) and on the action of $W_\lambda$ on $\lambda$, where $\overline{\lambda}$ denotes the linkage class of $\lambda$. A natural question is whether an analogous result holds for $\mathcal{O}_q$. The main issue in addressing this question is to identify appropriate integral root subsystems and the corresponding Weyl groups that govern the block decomposition of $\mathcal{O}_q$ (Subsection \ref{IntegralWeyl&subsys}). Existing results suggest that the situation should behave similarly to the classical case, and no essential differences seem to arise. However, we observe that the assignment from indecomposable blocks to their associated integral root subsystems exhibits a distinction that does not occur in the classical setting, as illustrated in the diagram below: 
	{\small \[ \Bigl\{\catO_{q,\overline{\Lambda}}\bigm\vert \Lambda\in \rmT_{\bbC(q)}\Bigr \} \rightarrow\left\{ \begin{array}{c} \text{All root} \\ \text{subsystems } \end{array} \right\} \supset \left\{ \begin{array}{c} \text{All dual-closed } \\ \text{root subsystems } \end{array} \right\}  \leftarrow  \Bigl\{\catO_{\overline{\lambda}}(\frakg)\bigm\vert \lambda\in \frakh^* \Bigr\}. \]
	}
	\vspace{-.3cm}
	
	\noindent In particular, the integral root subsystem associated with an indecomposable block of $\mathcal{O}_q$ need not be dual-closed (see Lemma \ref{dualsys} and Example \ref{Notclosed}), in contrast to the classical case. We are not aware of this result appearing in the existing literature. Therefore, we include a proof based on Jantzen's deformation theory, combined with the structure functor and the translation functor. The main idea of the proof traces back to \cite{Soe90} in the classical case and to \cite{AJS94} in the root of unity case.

	Let $\Phi$ denote the root system with respect to  $(\mathfrak{g}, \frakh)$. We show that for every proper maximal root subsystem $\Psi \subsetneq \Phi$, there exists a weight $\Lambda \in \rmT_{\bfk}$ such that the integral root subsystem associated with $\mathcal{O}_{q,\overline{\Lambda}}$ coincides exactly with $\Psi$ (Proposition \ref{maxroot-characterizations}). Based on the previous statements, we express the Gelfand-Kirillov dimension of $L_q(\Lambda)$ in terms of Lusztig’s $\afun$-function (Theorem \ref{GKdimension}), which leads to a determination of the nonzero minimal Gelfand-Kirillov dimension among all simple objects in $\mathcal{O}_q$. When $\Phi$ is indecomposable, we further obtain that this minimal nonzero value agrees with the classical one, namely, $h^\vee - 1$ (with $h^\vee$ the \emph{dual Coxeter number}), except in type $B$; whereas in type $B$, the value equals the rank of $\Phi$, say $\mathrm{rank}(\Phi) = n$, which differs from the classical value $2n-2$ (Theorem \ref{q-minGK}).	
	As an application, we can address the existence problem for quantum cuspidal weight modules at generic $q$, which is central to the classification of simple weight modules of quantum groups via parabolic induction. It also accounts for a discrepancy between quantum and classical cases noted in the previous work \cite{CGLW21}.

	The paper is organized as follows. 
	Section \ref{Preli&Nota} collects the required notations and background on quantum groups and their representation theory. In Section \ref{DimensionGrowth}, we study the growth of the dimension of irreducible $U_\zeta$-modules as the order $\ell$ of the root of unity $\zeta$ tends to infinity, and relate this behavior to the Gelfand-Kirillov dimension in the generic case. 
	Section \ref{Sec:Equiv} is devoted to the structure of the category $\catO_q$ and its block decomposition under our general weight setting; in particular, we revisit the quantum analogue of Soergel’s result. In Section \ref{minGK}, we determine the Gelfand-Kirillov dimension of simple modules in $\catO_{q}$ and compute the minimal nonzero Gelfand-Kirillov dimension. Finally, Section \ref{App} discusses an application. 
	
	\vspace{.3cm}
	{\bf Acknowledgements.} The authors  would like to thank Zongzhu Lin, Olivier Mathieu, Evgeny Mukhin, Victor Ostrik and Peng Shan for useful discussions. The second-named author is grateful to Shenzhen International Center for Mathematics for financial support and hospitality.
	V. F. was partially supported by the NSF of China (12350710787 and 12350710178); X. L. was partially supported by the China Postdoctoral Science Foundation
	(2024M751285) and the NSF of China (12401030).

	\section{Quantum groups and representations}\label{Preli&Nota}
	Throughout this paper, $\bbN$ and $\bbN_+$ denote the set of nonnegative integers and the set of positive integers, respectively. Let $q$ be an indeterminate and let $\bbC(q)$ be the field of rational functions in $q$ with complex coefficients. Set $\bfk$ to be the algebraic closure of $\bbC(q)$. For $m, r \in \bbN_+$, $m\geqslant r$, we use the following standard notations:
	\[ [m]_q = \frac{q^m-q^{-m}}{q-q\inv}, \quad [m]_q^!=[1]_q[2]_q\cdots [m]_q. \]
	We assume throughout that $\zeta \in \mathbb{C}^\times$ is a primitive $\ell$-th root of unity, where $\ell$ is an odd integer greater than $1$ and prime to $3$ if $\mathfrak{g}$ has a component of type $G_2$.
	
	\subsection{Quantum groups} Let $\frakg$ be a finite dimensional complex semisimple Lie algebra of rank $n$ with a fixed Cartan subalgebra $\frakh$. Set $I= \{1, 2, \cdots, n \}$ and let $(d_ia_{ij})_{i,j\in I}$, where $d_i$'s are relatively prime positive integers, be an $n\times n$ symmetrized  Cartan matrix of $\frakg$.  Let $W$ be the Weyl group of $\frakg$, and $\Phi$ denote the root system of $\frakg$ with respect to $\frakh$. Let $\Pi = \{\alpha_i\;|\; i \in I \}\subset \Phi$ (resp. $\{\omega_i\;|\; i\in I \} \subset \frakh^*$) be the set of \emph{simple roots} (resp. of \emph{fundamental weights}) of $\frakg$. As usual, $Q$ (resp. $P$) denotes the root (resp. weight) lattice of $\frakg$. Let $P^+ = \sum_{i\in I}\bbN \omega_i$,  $Q^+ = \sum_{i\in I}\bbN \alpha_i$ and denote  $\Phi^+= Q^+\cap \Phi$ the set of  \emph{positive} roots. Set $\rho\in P^+$ the sum of fundamental weights. There exists a non-degenerate  symmetric $W$-invariant bilinear form $(\cdot,\cdot)$ on $\frakh^*$ satisfying that $(\alpha_i, \alpha_j) = d_ia_{ij}$, $(\alpha_i, \omega_j) = d_i\delta_{ij}$ for all $i,j\in I$, and for each $ \alpha\in \Phi$ the \emph{reflection} $s_\alpha\in W$ acts on $\frakh^*$ by $s_\alpha(\lambda) = \lambda - \inprod{\lambda}{\alpha}\alpha$ for any $\lambda \in \frakh^*$, where $\alpha^\vee = \frac{2\alpha}{(\alpha, \alpha)}$.

	The Hopf algebra $U_q := U_q(\frakg)$ is generated over $\bbC(q)$ by the elements $E_i, F_i$, $ i \in I$, and $K_\alpha$, $\alpha\in Q$ subject to 
	\begin{align}
		\label{defre1}    &K_\mu K_{-\mu}=1, \quad K_\mu K_\nu =K_\nu K_\mu  \\
		\label{qc}&K_\mu E_i  = q^{(\mu,\alpha_i)}E_i K_\mu, \quad
		K_\mu F_i  = q^{-(\mu,\alpha_i)}F_iK_\mu,   \\
		& E_iF_j-F_jE_i=\delta_{ij}\frac{\Ki-\Ki\inv}{q_i - q_i\inv}, \quad q_i =q^{d_i},\label{quanfrac}
	\end{align}
	along with the quantum Serre relations (cf. \cite[Section 4.3]{Jan95}). Its comultiplication $\Delta$, counit $\epsilon$, and antipode $S$ are given by 
	\begin{equation*}
		\begin{gathered}
			\Delta(K_\mu) = K_\mu\otimes K_\mu,\quad \Delta(E_i) = E_i\otimes 1 + \Ki \otimes E_i, \quad \Delta(F_i) = F_i\otimes \Ki\inv + 1\otimes F_i, \\
			\epsilon(E_i)=\epsilon(F_i) = 0, \quad \epsilon(K_\mu) = 1, \\
			S(K_\mu) = K_{-\mu},\quad S(E_i) = -\Ki \inv E_i, \quad S(F_i) = -F_i\Ki.
		\end{gathered}
	\end{equation*}
	
	In the next subsection, we review the deformed algebra $U_R$ of $U_q$ over any deformation ring $R$, as well as the specialization of $U_q$ at values of $q $ in $ \mathbb{C}^\times$.  It is straightforward to verify that they also admit a Hopf algebra structure under the same definition. When no confusion arises, we will continue to denote the comultiplication, counit, and antipode in the corresponding algebras by $\Delta$, $\epsilon$, and $S$, respectively.

	\subsection{Specializations} \label{Specializations}
	Let $A= \bbC[q, q\inv]$. Inside $U_q$ consider the De Concini-Kac form \cite{DCK89}, that is, the $A$-subalgebra $U_A$ generated by the elements $E_i, F_i, K_{\alpha_i}$ and $\frac{\Ki-\Ki\inv}{q_i-q_i\inv}$ for $i\in I$. Let $U_A^+$ (resp. $U_A^-$) be the $A$-subalgebra of $U_A$ generated by $E_i$ (resp. $F_i$) and $U_A^0$ that generated by the $K_{\alpha_i}$ and $\frac{\Ki-\Ki\inv}{q_i-q_i\inv}$. 
	Let $R$ be an $A$-algebra. Define $U_R = U_A\otimes_A R$ with the $R$-subalgebras $U_R^\pm = U_A^\pm \otimes_A R$ and $U_R^0 = U_A^0\otimes_A R$. In particular, when $R$ is the fraction field $\bbC(q)$ of $A$ with the canonical embedding $A\hookrightarrow \bbC(q)$, we have $U_R = U_q$; in this case, we shall write the subalgebras $U_R^\pm$ (resp. $U_R^0$) as $U_q^\pm$ (resp. $U_q^0$).
	For $\xi\in \bbC^\times$, set $U_{\xi} = U_A\otimes_{A}\bbC$, where $\bbC$ is viewed as an $A$-algebra by the specialization $q \rightarrow \xi$. 
	Denote by $U_\xi^+$, $U_\xi^-$ and $U_\xi^0$ the images of $U_A^+$, $U_A^-$ and $U_A^0$ in $U_\xi$. In particular, if $\xi = \zeta$, we get the De Concini-Kac quantum group at root of unity $\zeta$, and its corresponding subalgebras $U_\zeta^\pm$ and $U_\zeta^0$. For later use, we will also consider the integral form $U_{\bbQ[q,q^{-1}]}$, obtained by replacing $A$ with $\bbQ[q,q^{-1}]$ (and similarly define $U_{\bbQ[q,q^{-1}]}^{\pm}$ and $U_{\bbQ[q,q^{-1}]}^{0}$).
If $R$ is a $\bbQ[q,q^{-1}]$-algebra, we define $U_R$, $U_R^{\pm}$, and $U_R^{0}$ by base change in the same way.

	One can introduce the quantum analog of the Poincar{\'e}-Birkhoff-Witt (PBW) basis for $U_q^\pm$. 
	Due to Lusztig \cite{Lus90} we have an action of the braid group with generators $T_i, i \in I$ by automorphisms of $U_q$ defined as follows:
	\begin{align*}
		&T_iK_\mu = K_{s_i\mu}, \quad T_iE_i = -F_i\Ki,\quad T_iF_i = -\Ki\inv E_i, \\
		&T_iE_j = \sum_{s=0}^{-a_{ij}}(-1)^{s-a_{ij}}q_i^{-s}E_i^{(-a_{ij}-s)}E_jE_i^{(s)}, \\
		&T_iF_j = \sum_{s=0}^{-a_{ij}}(-1)^{s-a_{ij}}q_i^{-s}F_i^{(s)}F_jF_i^{(-a_{ij}-s)}
	\end{align*}
	for $i\neq j$. Here we write $E_i^{(r)}$ and $F_i^{(r)}$ for the divided powers $\frac{E_i^r}{[r]_{q_i}^!}$ and $\frac{F_i^r}{[r]_{q_i}^!}$, respectively.
	
	Fix a reduced expression $s_{i_1}s_{i_2}\cdots s_{i_N}$ of the longest element $w_0$ of $W$. We then define a \emph{convex ordering} of all positive roots in $\Phi^+$:
	\[\beta_1 = \alpha_{i_1}, \quad \beta_2=s_{i_1}\alpha_{i_2}, \quad \cdots, \quad \beta_N = s_{i_1}\cdots s_{i_{N-1}}\alpha_{i_N}. \]
	Introduce root vectors $E_{\beta_r} = T_{i_1}\cdots T_{i_{r-1}}E_{i_r}$ and $F_{\beta_r} = T_{i_1}\cdots T_{i_{r-1}}F_{i_r}$. Then we choose the PBW bases as follows. For $U_q^+$ (or $U_\zeta^+$) this is a basis of monomials 
	\[E^{\underline{m}} = E_{\beta_N}^{m_N}E_{\beta_{N-1}}^{m_{N-1}}\cdots E_{\beta_1}^{m_1} \]
	where $m_i\geqslant 0$. For $U_q^-$ (or $U_\zeta^-$) this is a basis of monomials 
	\[ F^{\underline{m}} = F_{\beta_1}^{m_1}F_{\beta_2}^{m_2}\cdots F_{\beta_N}^{m_N} \]
	where $m_i\geqslant 0$. Moreover, the algebra multiplications define a $\bbC(q)$-vector space isomorphism $U_q \cong U_q^-\otimes U_q^0\otimes U_q^+$, and a $\bbC$-vector space isomorphism $U_\zeta \cong U_\zeta^-\otimes U_\zeta^0\otimes U_\zeta^+$. Note that all elements $E_\alpha^\ell$, $F_\alpha^\ell$, $\alpha\in \Phi^+$ and $K_{\alpha_i}^\ell$, $i\in I$ are central elements in $U_\zeta$. 
	
	\subsection{Centers}\label{centers}
	To start with, let $\rmG$ be the (adjoint type) complex connected algebraic group whose Lie algebra is $\frakg$, and let $\rmT$ be a fixed maximal torus of $\rmG$. Fix an $\ell$-th primitive root $\zeta$ of unity. Let $Z_{\zeta, 0}$ be the subalgebra of $U_\zeta$ generated by the elements $E_\alpha^\ell$, $F_\alpha^\ell$ for $\alpha\in \Phi^+$, and the $K_{\alpha_i}^{\pm \ell}$ for $i\in I$. Using the defining relations of $U_\zeta$, one can verify that $Z_{\zeta, 0}$ is indeed a central subalgebra of $U_\zeta$, which is independent of the choice of the reduced expression of $w_0$. 
	
	 A geometric description of $Z_{\zeta,0}$ was established in \cite{DKP92}. Let $\rmB^+$ and $\rmB^-$ be the opposite Borel subgroups of $\rmG$ with unipotent radicals $\rmU^+$ and $\rmU^-$, respectively.  Define the subgroup $\rmH\subset \rmB^-\times \rmB^+$ as the kernel of the composed morphism
	\[
	\xymatrix{
		\rmB^- \times \rmB^+ \ar[r] & \rmT \times \rmT \ar[r] & \rmT
	}
	\]
	where the first map is the natural quotient modulo the unipotent radicals, and the second is the group multiplication in $\rmT$. Notably, $\rmH$ is independent of the choice of  $\ell$ and $\zeta$,  and any element of $\rmH$ defines a central character of $\zza$. In the sequel, we treat elements of $\rmH$ (also its scalar extensions, e.g., $\rmH\times_\bbC\bbC(q)$) as ``\emph{universal}'' characters of $\zza$, which can be specialized at any given choice of $\ell$ and $\zeta$ as needed. 
	Let $g\in \rmH$, and denote by $\chi_g$ the corresponding central character of $Z_{\zeta,0}$. Define $U_\zeta^{\chi_g}$ (or simply $U_\zeta^{g}$) as the quotient algebra of $U_\zeta$ by the ideal generated by $z-\chi_g(z)$ for $z\in Z_{\zeta,0}$. Define $\pi:\rmH \rightarrow \rmG$ by $\pi(x,y) = xy\inv$. By \cite{DKP92}, the algebras $U_\zeta^g$ and $U_\zeta^{g'}$ are isomorphic if $\pi(g)$ and $\pi(g')$ belong to the same conjugacy class of $\rmG$. 
	
	Note that there exists a canonical  $W$-action on $U_q^0$ given by $wK_\mu = K_{w\mu}$ for $w\in W$ and $\mu\in Q$. 
	Define an action of $\bbZ_2^n$ on $U_q^0$ by 
    \begin{equation}\label{Z2action}
        \sigma(K_{\mu}) = \sigma(\mu)K_{\mu}, \quad \text{for } \sigma \in \bbZ_2^n, \mu\in Q,
    \end{equation}
	where $ \sigma(\mu)\in \{\pm 1 \}$ is determined by extending the action on Cartan generators that the $i$-th copy of $\bbZ_2^n$ multiply $K_{\alpha_i}$ by $1$ or $-1$. Combined with the above, we obtain an action of the \emph{extended Weyl group} $\Wch=\bbZ_2^n\rtimes W$ on $U_q^0$. 
	
	Let $\rmT_q = \Hom_{\bbC(q)\text{-}\mathrm{alg}}\big(U_q^0, \bbC(q)\big)$. Then the above action induces a $\Wch$-action on $\rmT_q$ given as 
    \begin{equation}\label{Waction}
        (w\Lambda)(K_\mu) = \Lambda(w\inv K_\mu)
    \end{equation}
	for any $w\in \Wch$, $\Lambda\in \rmT_q$ and $\mu\in Q$. Note that the action of $W$ on $\rmT_q$ preserves the group structure of $\rmT_q$, i.e., $w(\Lambda\Lambda^\prime) = (w\Lambda)(w\Lambda^\prime)$ for $w\in W$ and $\Lambda, \Lambda^\prime\in \rmT_q$. 
	
	Define an algebra homomorphism $\gamma: U_q^0\rightarrow U_q^0$ by $\gamma(K_i) = q_iK_i$. One may check that  $\Lambda\circ\gamma = q^\rho \Lambda$ for $\Lambda \in \rmT_q$. Let $U^+_+$ (resp. $U^-_+$) be the \emph{augmentation ideal} of $U_q^+$ (resp. $U_q^-$), that is, the kernel of the counit $\epsilon$ restricted to $U_q^+$ (resp. $U_q^-$). Using the PBW theorem we obtain that $U_q$ has a direct sum decomposition 
	\[U_q = U_q^0 \oplus (U_+^-U_q+U_qU^+_+). \]
	Let $\pr_{U_q}$ be the projection of $U_q$ onto $U_q^0$. Now restrict the projection map $\pr_{U_q}$ to the center $Z(U_q)$ of $U_q$. Then from \cite{DCK89} there exists a $\bbC(q)$-algebra isomorphism, known as \emph{the Harish-Chandra isomorphism}, 
	\[
	\xymatrix{
		{\gamma^{-1}\circ \pr_{U_q}}: Z(U_q) \ar[r]^-{\widesim{}} &
		(U_q^0)^{\Wch}
		:= \{\,u \in U^0 \mid wu = u,\ \forall w \in \Wch\,\}.
	}
	\]
	
	Let $\chi_\Lambda$ denote the \emph{central character} of $Z(U_q)$ associated with a weight $\Lambda \in \rmT_q$, defined by $\chi_\Lambda(z)=\Lambda(\pr_{U_q}( z))$ for $z\in Z(U_q)$.  Note that  $\Lambda(\pr_{U_q} (z))$ agrees with $(\Lambda q^\rho)(\gamma\inv\circ\pr_{U_q}(z))$, from which the Harish-Chandra isomorphism yields the following result.  
	
	\begin{lemma}
		$\chi_\Lambda = \chi_{\Lambda^\prime}$ if and only if $\Lambda q^\rho $ and $\Lambda^\prime q^\rho $ are conjugate under $\Wch$. \qed 
	\end{lemma}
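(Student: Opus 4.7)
The plan is to transfer the question to the Cartan subalgebra using the Harish-Chandra isomorphism stated just above the lemma. From the identity $\Lambda(\pr_{U_q}(z)) = (\Lambda q^\rho)(\gamma^{-1}\circ \pr_{U_q}(z))$ together with the fact that $\gamma^{-1} \circ \pr_{U_q}$ restricts to an isomorphism $Z(U_q) \xrightarrow{\sim} (U_q^0)^{\Wch}$, we see that under this isomorphism the central character $\chi_\Lambda$ corresponds to the restriction of the character $\Lambda q^\rho: U_q^0 \to \bbC(q)$ to the subalgebra $(U_q^0)^{\Wch}$. Consequently, $\chi_\Lambda = \chi_{\Lambda'}$ if and only if $\Lambda q^\rho$ and $\Lambda' q^\rho$ agree on $(U_q^0)^{\Wch}$, and the lemma becomes a statement purely about the $\Wch$-action on characters of $U_q^0$.

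The forward direction ($\Wch$-conjugate implies equal central character) is immediate from the definition of the action: if $w(\Lambda' q^\rho) = \Lambda q^\rho$ and $f \in (U_q^0)^{\Wch}$, then $(\Lambda q^\rho)(f) = (\Lambda' q^\rho)(w^{-1} f) = (\Lambda' q^\rho)(f)$.

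The converse reduces to a standard fact from the invariant theory of finite group actions on commutative algebras. For any $x \in U_q^0$, the polynomial $\prod_{w \in \Wch}(T - wx) \in U_q^0[T]$ is monic of degree $|\Wch|$, has coefficients in $(U_q^0)^{\Wch}$ (they are elementary symmetric functions of the $\Wch$-orbit of $x$, hence invariant) and is annihilated at $T = x$. Thus $U_q^0$ is integral over $(U_q^0)^{\Wch}$, and by the classical Cohen-Seidenberg-type theorem for finite group actions, $\Wch$ acts transitively on the set of maximal ideals of $U_q^0$ lying above a given maximal ideal of $(U_q^0)^{\Wch}$. Applied to $\ker(\Lambda q^\rho)$ and $\ker(\Lambda' q^\rho)$, which by hypothesis contract to the same maximal ideal of $(U_q^0)^{\Wch}$, this yields $w \in \Wch$ with $\ker(w(\Lambda q^\rho)) = \ker(\Lambda' q^\rho)$; since both are kernels of $\bbC(q)$-algebra homomorphisms into $\bbC(q)$, they determine the corresponding characters uniquely, so $w(\Lambda q^\rho) = \Lambda' q^\rho$, as desired.

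The only nontrivial step is the transitivity of the $\Wch$-action on the fibers of $\Spec U_q^0 \to \Spec (U_q^0)^{\Wch}$, but once the explicit orbit polynomial above is exhibited to verify integrality, the rest is a standard commutative-algebra argument that goes through without modification in the quantum setting.
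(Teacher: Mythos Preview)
Your proof is correct and follows the same route the paper indicates: the paper states the lemma with a \qed\ immediately after noting that $\Lambda(\pr_{U_q}(z)) = (\Lambda q^\rho)(\gamma^{-1}\circ\pr_{U_q}(z))$ and that $\gamma^{-1}\circ\pr_{U_q}$ is the Harish-Chandra isomorphism onto $(U_q^0)^{\Wch}$, treating the rest as standard. You have simply spelled out the standard invariant-theoretic step (integrality via the orbit polynomial and transitivity of $\Wch$ on fibers) that the paper leaves implicit.
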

	
	In the classical setting, $W$ admits a \emph{shifted action} on $\frakh^*$ given by $w\cdot \lambda = w(\lambda + \rho)-\rho$ for $w\in W$, $\lambda \in \frakh^*$. Analogously, we introduce the following shifted action of $\Wch$ on $\rmT_q$. 
	\begin{definition}
		Let $w \in \Wch$ and $\Lambda \in \rmT_q$. Define 
		$w\cdot \Lambda = q^{-\rho}w(\Lambda q^\rho)$.
	\end{definition}
	
	Specializing $q$ to $\zeta$, we get a central subalgebra of $U_\zeta$ from $Z(U_q)$, namely the \emph{Harish-Chandra center}, which we denote by $Z_{\zeta,1}$. By the Harish-Chandra isomorphism, $Z_{\zeta,1}$ is isomorphic to the $\Wch$-invariant subalgebra  $(U_\zeta^0)^{\Wch}$ of $U_\zeta^0$. Consider the shifted action of $\Wch$ on $\rmT$ (similar as the action on $\rmT_q$).  Take  $[\Lambda] \in \rmT/\big(\Wch, \cdot\big)$. The central reduction $U_{\zeta,\Lambda}$ of $U_\zeta$ is then defined as the quotient of $U_\zeta$ by the ideal generated by $z-\chi_\Lambda(z)$, $z\in Z_{\zeta,1}$. 
	Let $\rmT/\Wch$ denote the quotient of $\rmT$ by the ordinary $W$-action and sign translation by $\bbZ_2^n$. Consider the fiber product 
	$\rmH \times_{\rmT/\Wch} \rmT/(\Wch, \cdot) $, where the map $\rmT/(\Wch, \cdot) \rightarrow \rmT/\Wch$ sends $[\Lambda] \mapsto [\Lambda^\ell]$, and the map $\rmH \rightarrow \rmT/\Wch$ is the composition of $\pi:\rmH \rightarrow \rmG$ and \emph{the Steinberg map} $\rmG \rightarrow \rmT/\Wch$. Here a potential confusion may arise: we regard $\rmT$ as a subgroup of $\rmH$ in a natural way. The map $\pi$ then induces the square map $\rmT \to \rmG$, whose image $\rmT/\mathbb{Z}_2^n := \overline{\rmT}$ is exactly the maximal torus of $\rmG$. Thus, we can consider the Steinberg map $\rmG \to \overline{\rmT}/W (\cong \rmT/\Wch)$, see \cite[Subsections 16.2,16.3]{DCP93} for more details. Note that every irreducible $U_\zeta$-module factors uniquely through a central reduction $U_{\zeta,\Lambda}^{g}$ for some $(g, [\Lambda])\in\rmH \times_{\rmT/\Wch} \rmT/(\Wch, \cdot) $.

	\subsection{Representations} 	Let $L$ be an irreducible $U_\zeta$-module. Note that $Z_{\zeta,0}$ acts on $L$ by a central character in $\Spec Z_{\zeta,0}$. In particular, the central elements $K_\mu^\ell$, $\mu\in Q$ act by scalars. It follows that $L$ is a weight  $U_\zeta$-module in terms of $U_\zeta^0$ with its weights lying in $ \rmT$. 
	Let $R$ be an $A$-algebra (or a $\bbQ[q,q\inv]$-algebra). We can consider more general \emph{weights} for $U_R$. Define 
	\[\rmT_R:=\Hom_{R\text{-}\mathrm{alg}}\big(U_R^0, R\big),\]
	the set of all $R$-algebra homomorphisms from $U_R^0$ to $R$.
	Thanks to the Hopf algebra structure of $U_\zeta^0$ (inherited from $U_\zeta$),  $\rmT_R$ carries a natural abelian group structure, where the multiplication and the inverse are given by 
	\[(\Lambda \Lambda^\prime)(u) = (\Lambda\otimes \Lambda^\prime)\circ \Delta(u), \quad \Lambda\inv(u) = \Lambda\circ S(u) \]
	for any $\Lambda, \Lambda^\prime\in \rmT_R$, and $u \in U_R^0$. Moreover, an $A$-algebra homomorphism $R \rightarrow R^\prime$ induces a group  homomorphism $\rmT_R \rightarrow \rmT_{R^\prime}$; in particular, the inclusions $\bbQ[q,q\inv]\subset A \subset \bbC(q) \subset \bfk$ yields that 
	\[ \rmT_{\bbQ[q,q\inv]} \subset \rmT_A \subset \rmT_{q} \subset \rmT_\bfk. \]
	Now identify $\frakh_{\bbQ}^*$ with the $\bbQ$-linear span of the fundamental weights $\{\omega_i \;|\; i\in I  \}$. Each $\lambda \in \frakh_{\bbQ}^*$ gives rise to a weight $q^\lambda$ in  $\rmT_\bfk$ by assigning $ K_\mu $ to $ q^{(\lambda, \mu)} $. We refer to such a weight $q^\lambda$, for $\lambda\in \frakh_{\bbQ}^*$, as a \emph{linear weight} in $\rmT_\bfk$ following Joseph-Letzter \cite{JL95}. Note that  $q^P:=\{q^\lambda\;|\; \lambda \in P\}$ lies in $ \rmT_A$. Specializing $q$ to $\zeta$, the linear weight $q^\lambda$ induces a weight $\zeta^\lambda \in \rmT$, defined by $K_\mu \mapsto \zeta^{(\lambda, \mu)}$. 
	
	Let $M$ be a $U_R$-module and $\Lambda \in \rmT_R$.  Define 
	\[M_\Lambda = \{v \in M \; |\;  u.v = \Lambda(u)v, \forall u \in U_R^0\}.\]
	Then $M_\Lambda$ is an $R$-module. By the defining relations (\ref{qc}) we have $ E_i.M_\Lambda \subset M_{\Lambda q^{  \alpha_i}} $ and $ F_i.M_\Lambda \subset M_{\Lambda q^{-  \alpha_i}} $. If $M_\Lambda$ is nonzero, then we say that $\Lambda$ is a \emph{weight} of $M$, and $M_\Lambda$ is a \emph{weight $R$-module} of \emph{weight} $\Lambda$. Call 
	$M$  a \emph{weight module} of $U_R$ if $M = \oplus_\Lambda M_\Lambda$. In this case, we denote by $\Omega(M)$ the set of all weights of $M$. 
	In general, the notion of formal characters for weight modules over $U_R$ can be defined as follows (cf. \cite[Section 4.1]{Jos95}). 
	\begin{definition}
		Let $M = \oplus_{\Lambda\in \rmT_R}M_\Lambda$ be a weight $U_R$-module such that each  $M_\Lambda$ is a free $R$-module of finite rank. Then $M$ admits a \emph{formal character}, defined by 
		\[ \ch M = \sum_{\Lambda\in \rmT_R} (\mathrm{rank}_{R} M_\Lambda)\Lambda. \]  
	\end{definition}
	
	For $\Lambda \in \rmT_R$, let 
	$M_R(\Lambda) = U_R\otimes_{U_R^0U_R^+}R_\Lambda$ denote the \emph{Verma module} of $U_R$, where $R_\Lambda$ is the  $U_R^0U_R^+$-module on which $U_R^+$ acts trivially and $u.1_R = \Lambda(u)$ for all $u\in U_R^0$. Obviously, $M_R(\Lambda)$ is a weight module.
	Let $\bbF$ be a subfield of $\bfk$ containing $\bbQ[q,q\inv]$, and suppose $\Lambda \in \rmT_\bbF$. We then write $M_q(\Lambda)$ and $L_q(\Lambda)$ for the Verma module $M_{\bbF}(\Lambda)$ over $U_\bbF$ and its simple quotient $L_\bbF(\Lambda)$, respectively.

	Assume $\Lambda\in \rmT_q$. Let $\ell$ be a sufficiently large positive integer such that $\Lambda \in \rmT_{A_\zeta}$, where $A_\zeta = \bbC[q]_{(q-\zeta)}\subset \bbC(q)$  is the  localization of $\bbC[q]$ at the maximal ideal $(q-\zeta)$. Since $A\subset A_\zeta$, the ring $A_\zeta$ naturally inherits an $A$-algebra structure. 
	\begin{definition}
		Let $\Lambda \in \rmT_{A_\zeta}$. Define $\Lambda_\zeta\in \rmT$ to be the image of $\Lambda$ under the \emph{specialization map} $(-)_\zeta: \rmT_{A_\zeta} \rightarrow \rmT$. 
	\end{definition}
	We may define an $A_\zeta$-form of the Verma module $M_q(\Lambda)$. Specifically, choose a highest weight vector $v_\Lambda$ and consider its $U_{A_\zeta}$-submodule  $M_{A_\zeta}(\Lambda) = U_{A_\zeta}.v_\Lambda $. 
	This defines an $A_\zeta$-submodule of $M_q(\Lambda)$, and its specialization at $q=\zeta$ yields a $U_\zeta$-module $M_\zeta(\Lambda_\zeta) := M_{A_\zeta}(\Lambda) / (q-\zeta)M_{A_\zeta}(\Lambda)$, which we also denote simply by $M_\zeta(\Lambda)$. 
	Note that the $U_\zeta$-module $M_\zeta(\Lambda)$ is indecomposable but reducible. 
	Let $\overline{M}_\zeta(\Lambda)$ denote the quotient of $M_\zeta(\Lambda)$ by the $U_\zeta$-submodule generated by all the vectors $F_\alpha^\ell.\bar{v}_\Lambda$, $\alpha\in \Phi^+$. We call $\overline{M}_\zeta(\Lambda)$ a \emph{diagonal  $U_\zeta$-module} (cf. \cite[Section 3.2]{DCK89}).

	\subsection{Jantzen's filtration}\label{Janfiltration}
	The quantum group $U_q$ admits a  \emph{quantum Shapovalov form}, introduced in \cite{DCK89,JL95}.
	Define a bilinear form $\calS$ on $U_q$ by $\calS(a, b) = \pr(\delta(a)b)$ for $a, b \in U_q$, where $\delta: U_q \rightarrow U_q$ is the $\bbC(q)$-algebra anti-automorphism defined by 
	\[\delta(E_i) = F_i, \quad \delta(F_i) = E_i, \quad  \delta(K_i)=K_i, \quad \text{for}\; i\in I.  \]
	One may check that $\calS$ is symmetric and contravariant, i.e., $\calS(ab, c) = \calS(b, \delta(a)c)$ for $a, b, c \in U_q$.
	The form $\calS$ can be computed from the restriction to $U^-_q$. For $\nu\in Q^+$, let $U_{-\nu}^-$ be the $\bbC(q)$-linear subspace of $U_q^-$ spanned by the basis $\{F^{\underline{m}}\;|\; \sum_im_i\beta_i = \nu \}$. Note that $\calS(U_{-\mu}^-, U_{-\nu}^-) = 0$ unless $\mu = \nu$ in $Q^+$. Define $\calS_\nu$ to be the restriction of $\calS$ to $U_{-\nu}^-$. Then the matrix $\calS_\nu$ with respect to the basis $\{F^{\underline{m}}\;|\; \sum_im_i\beta_i = \nu \}$ has entries in $U_q^0$. Due to the special nature of commutation relations between the $E_i, F_i$, these entries actually lie in the subalgebra of $U_q^0$ generated by the elements $\Ki$, $i\in I$, over the ring $\bbQ[q, q\inv, (q_i-q_i\inv)\inv$, $i\in I]$, namely $U^0_{\bbQ[q,q\inv]}$. 
	
	For a positive root $\alpha\in \Phi^+$,  with $\alpha = w(\alpha_i)$ for some $i\in I$, $w\in W$, set $q_\alpha = q_i$ and $[K_\alpha;m]=w\left([K_{\alpha_i};m]\right)$, where $m\in \bbZ$ and 
	\[[K_{\alpha_i}; m]_q :=\frac{K_{\alpha_i} q^m -K_{\alpha_i}\inv q^{-m}}{q_i -q_i\inv}.  \]
	Then we can formulate  
	\begin{equation}\label{Shapdet}
		\det(\calS_\nu) = \prod_{\alpha\in \Phi^+}\prod_{m\in \bbN_+}\left([m]_{q_\alpha}[K_\alpha; (\rho, \alpha)-\frac{m}{2}(\alpha, \alpha)]_q\right)^{\rmp(\nu-m\alpha)},
	\end{equation}
	where $\rmp$ is the \emph{Kostant partition function} $Q^+ \rightarrow \bbN$. 
	
	The importance of $\calS$ lies in the fact that it induces a bilinear form on each Verma module. Let $R$ be a $\bbQ[q,q\inv, (q_i-q_i\inv)\inv$, $i\in I]$-algebra. For any $\Lambda\in \rmT_R$, the map 
	\[ (av_\Lambda, bv_\Lambda) \mapsto (\Lambda\circ\calS)(a, b) \]
	defines a symmetric $R$-bilinear form on $M_R(\Lambda)$,  denoted by $\calS_R^\Lambda$. In particular, if $R$ is further a field, then it can be proved that the radical of $\calS_R^\Lambda$ is exactly the unique maximal submodule of $M_R(\Lambda)$. Hence $M_R(\Lambda)$ is irreducible if and only if the radical of $\calS_R^\Lambda$ is trivial. For example, when $R = \bfk$, 	a standard argument then gives 
	
	\begin{lemma}\label{Verma}
		\begin{enumerate}[{\rm (1)}]
			\item The Verma module $M_q(\Lambda) $ is irreducible if and only if $q^{2(\rho, \alpha)}\Lambda(K_{2\alpha}) \notin q^{\bbN_+(\alpha, \alpha)}$ for any $\alpha\in \Phi^+$.
			
			\item If $q^{2(\rho, \alpha)}\Lambda(K_{2\alpha}) = q^{m(\alpha,\alpha)}$ for some $m\in \bbN_+$, then $M_q(\Lambda q^{-m\alpha})$ can be embedded into $M_q(\Lambda)$. \qed 
		\end{enumerate}
	\end{lemma}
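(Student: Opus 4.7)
The plan is to read both assertions off from the Shapovalov determinant formula (\ref{Shapdet}), combined with the fact recorded just above the lemma that the radical of $\calS_\bfk^\Lambda$ equals the unique maximal proper submodule of $M_q(\Lambda)$. I would first evaluate the individual factors in (\ref{Shapdet}) at a weight $\Lambda\in\rmT_\bfk$. Since $q$ is transcendental, every quantum integer $[m]_{q_\alpha}$ is a unit of $\bbC(q)\subset\bfk$, so only the Cartan factors matter. Using $K_{2\alpha}=K_\alpha^2$, a direct computation gives
\[
\Lambda\bigl([K_\alpha;(\rho,\alpha)-\tfrac{m}{2}(\alpha,\alpha)]_q\bigr)
=\frac{\Lambda(K_\alpha)\,q^{(\rho,\alpha)-m(\alpha,\alpha)/2}-\Lambda(K_\alpha)\inv\,q^{-(\rho,\alpha)+m(\alpha,\alpha)/2}}{q_\alpha-q_\alpha\inv},
\]
which vanishes precisely when $q^{2(\rho,\alpha)}\Lambda(K_{2\alpha})=q^{m(\alpha,\alpha)}$.

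Part (1) follows at once from this: $M_q(\Lambda)$ is irreducible iff $\det\Lambda(\calS_\nu)\neq 0$ for every $\nu\in Q^+$. If the stated condition holds, every factor of (\ref{Shapdet}) is nonzero and we are done. Conversely, if $q^{2(\rho,\alpha)}\Lambda(K_{2\alpha})=q^{m(\alpha,\alpha)}$ for some $(\alpha,m)$, taking $\nu=m\alpha$ gives $\rmp(\nu-m\alpha)=\rmp(0)=1$, so the vanishing Cartan factor appears with positive exponent in $\det\Lambda(\calS_{m\alpha})$ and forces it to vanish.

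For part (2) I would invoke the Jantzen filtration technique set up in Subsection \ref{Janfiltration}. Fix $(\alpha,m)$ satisfying the vanishing equation and deform $\Lambda$ to $\widetilde\Lambda\in\rmT_{\bfk[[t]]}$ by $\widetilde\Lambda(K_\mu):=\Lambda(K_\mu)\exp(t(\lambda_0,\mu))$ for a sufficiently generic $\lambda_0\in\frakh_\bbQ^*$, so that, after inverting $t$, every factor of (\ref{Shapdet}) evaluated at $\widetilde\Lambda$ becomes a unit and $M_{\bfk((t))}(\widetilde\Lambda)$ is irreducible. The Jantzen filtration
\[
M^i:=\bigl\{v\in M_{\bfk[[t]]}(\widetilde\Lambda):\calS^{\widetilde\Lambda}(v, M_{\bfk[[t]]}(\widetilde\Lambda))\subset t^i\bfk[[t]]\bigr\}\big/t\, M_{\bfk[[t]]}(\widetilde\Lambda)
\]
then satisfies a sum formula whose right-hand side, read off from the $t$-adic valuations of the Cartan factors in (\ref{Shapdet}), is an $\bbN$-linear combination of the characters $\ch M_q(\Lambda q^{-m'\alpha'})$ ranging over pairs $(\alpha',m')$ for which the vanishing equation holds at $\Lambda$, with strictly positive coefficient at the chosen $(\alpha,m)$. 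A standard induction on the $Q^+$-order, anchored at pairs with $m'\alpha'$ minimal, extracts from this formula a singular vector of weight $\Lambda q^{-m\alpha}$ in $M^1\subset M_q(\Lambda)$ which generates a copy of $M_q(\Lambda q^{-m\alpha})$, yielding the desired embedding.

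The main obstacle lies in part (2): one has to justify the sum formula for an arbitrary, possibly non-integral weight $\Lambda\in\rmT_\bfk$, and to verify both the existence of a sufficiently generic deformation (a Zariski-density argument, since the locus of ``bad'' $\lambda_0$ is a countable union of hyperplanes cut out by the equations $q^{2(\rho,\beta)}\widetilde\Lambda(K_{2\beta})\in q^{\bbN_+(\beta,\beta)}$) and the fact that the singular vector so produced actually generates the full Verma rather than a proper quotient, which is forced by comparing characters against the irreducible generic fiber $M_{\bfk((t))}(\widetilde\Lambda)$.
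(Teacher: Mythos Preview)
The paper itself gives no proof: the lemma is stated with a \qed and introduced by ``A standard argument then gives'', so the intended justification is simply that (1) and (2) follow from the Shapovalov determinant formula \eqref{Shapdet} together with the identification of the radical of $\calS_\bfk^\Lambda$ with the maximal submodule.  Your treatment of part~(1) is exactly this, and is correct.

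For part~(2), however, your Jantzen-filtration route is both more elaborate than necessary and contains a genuine gap.  The Jantzen sum formula yields only \emph{character} information about the maximal submodule, i.e.\ it tells you that $L_q(\Lambda q^{-m\alpha})$ occurs as a composition factor.  Your ``standard induction on the $Q^+$-order'' is supposed to upgrade this to the existence of a singular vector of weight $\Lambda q^{-m\alpha}$, but that step is precisely the content of the BGG theorem and is not a formal consequence of the sum formula.  For minimal $m\alpha$ your argument does work (the weight spaces of $F^1M_q(\Lambda)$ strictly above $\Lambda q^{-m\alpha}$ vanish, so any vector there is singular), but the inductive step does not go through: at non-minimal $m\alpha$ the weight space $(F^1M_q(\Lambda))_{\Lambda q^{-m\alpha}}$ need not consist of singular vectors, and there is no mechanism in your sketch to produce one.

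The standard argument the paper has in mind is more direct.  For a simple root $\alpha_i$ one checks by the $U_q(\mathfrak{sl}_2)$ commutation relations that $F_i^m v_\Lambda$ is singular whenever $q^{2(\rho,\alpha_i)}\Lambda(K_{2\alpha_i})=q^{m(\alpha_i,\alpha_i)}$; for a general $\alpha\in\Phi^+$ one writes a chain of simple reflections linking $\Lambda$ to $\Lambda q^{-m\alpha}$ and composes the resulting simple-root embeddings (this is the BGG argument, carried over verbatim to the quantum setting, cf.\ \cite{DCK89,JL95}).  Finally, once a singular vector exists, the submodule it generates is automatically a full Verma module because $M_q(\Lambda)$ is free of rank one over $U_q^-$ and $U_q^-$ is a domain; your appeal to the irreducible generic fibre is unnecessary.
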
  

	Recall that the Jantzen filtration for the universal enveloping algebra $U(\frakg)$ offers a conceptual approach to the structure of  Verma modules. Its $q$-analogue plays a similar role for quantum groups. 
	
	Let $\bbF$ be a subfield of $\bfk$ containing  $\bbQ[q,q\inv, (q_i-q_i\inv)\inv$, $i\in I]$. Set $B=\bbF[t, t\inv]$, where $t$ is an indeterminant. Let $T=(t-1)$ denote the maximal ideal of $B$, equipped with the valuation 
	\[ \mathrm{val}_T: f(t)\mapsto m \quad \text{ if\;} f(t)\in T^m\setminus T^{m+1} \text{ for\;} m\in \bbN.   \]
	Let $\Lambda$ be a weight in $ \rmT_\bbF$. Define $\Lambda^\prime = \Lambda t^\rho\in \rmT_B$, where $t^\rho:K_\mu \mapsto t^{(\rho,\mu)}\in \rmT_B$.  Consider the one-parameter deformation $M_B(\Lambda^\prime)$. For each $i\in \bbN$ define 
	\[F^iM_B(\Lambda^\prime) = \left\{v\in M_B(\Lambda^\prime) \bigm| \calS^{\Lambda^\prime}\left(v, M_B(\Lambda^\prime)\right)\subset T^i \right\}. \]
	By contravariance of $\calS^{\Lambda^\prime}$, the $F^iM_B(\Lambda^\prime)$ define a decreasing sequence of $U_B$-submodules $M_B(\Lambda^\prime)=F^0M_B(\Lambda^\prime)\supset F^1M_B(\Lambda^\prime)\supset F^2M_B(\Lambda^\prime)\supset \cdots$ with $F^iM_B(\Lambda^\prime)=0$ for large enough $i$. Set $F^iM_q(\Lambda) = F^iM_B(\Lambda^\prime)\otimes_BB/T$. Then the $F^iM_q(\Lambda)$ form a decreasing sequence of $U_\bbF$-submodules: 
	\[ M_q(\Lambda)=F^0M_q(\Lambda)\supset F^1M_q(\Lambda)\supset F^2M_q(\Lambda)\supset \cdots \]
	with $F^iM_q(\Lambda)=0$ for large enough $i$. This filtration is called \emph{the Jantzen filtration} of $M_q(\Lambda)$. Here $F^1M_q(\Lambda)$ is the unique maximal submodule of $M_q(\Lambda)$,  each nonzero quotient $F^iM_q(\Lambda)/F^{i+1}M_q(\Lambda)$ has a non-degenerate contravariant form, and for each $\beta\in Q^+$ we have $\dim_\bbF F^iM_q(\Lambda)_{\Lambda q^{-\beta}}  =\mathrm{rank}_{B} F^iM_{B}(\Lambda^\prime)_{\Lambda^\prime q^{-\beta}}$, where $F^iM_{B}(\Lambda^\prime)_{\Lambda^\prime q^{-\beta}} $ is a free $B$-module. 
	
	The character sum formula for the submodule filtration $\{F^iM_q(\Lambda) \}_i$ can be determined. Let us first introduce the following notation.
	\begin{definition}\label{Subset}
		Define $\calT_{\Lambda}$ a subset of $ \bbN_+\times \Phi^+$ as follows:
		\[\calT_{\Lambda} =  \big\{(m,\alpha)\in \bbN_+\times \Phi^+\bigm| q^{2(\rho,\alpha)-m(\alpha,\alpha)}\Lambda(K_{2\alpha}) =1 \big\}. \]
	\end{definition}
	
	Fix $\nu\in Q^+$. Consider the determinant (\ref{Shapdet}) specialized at the $\Lambda^\prime q^{-\nu}$-weight space of $M_B(\Lambda^\prime)$. The value of $\mathrm{val}_T$ at the factor $\Lambda^\prime\left([K_\alpha;(\rho,\alpha)-\frac{m}{2}(\alpha,\alpha)]\right)$ is $0$ unless $(m,\alpha)\in \calT_{\Lambda}$, in which case the value is $1$. As a result, the contribution to the character sum $\sum \ch F^iM_q(\Lambda)$ for fixed $\nu$ and $(m, \alpha)\in \calT_{\Lambda}$ is $\rmp(\nu-m\alpha)\Lambda q^{-\nu}$. 
	Therefore, we obtain the following  \emph{Jantzen sum formula}
	\begin{equation}\label{Jantzenformula}
		\sum_{i=1}^\infty \ch F^iM_q(\Lambda) =\sum_{\nu\in Q^+}\sum_{(m,\alpha)\in \calT_{\Lambda}}\rmp(\nu-m\alpha)\Lambda q^{-\nu} = \sum_{(m,\alpha)\in \calT_{\Lambda}}\ch M_q(\Lambda q^{-m\alpha}). 
	\end{equation}
	
	\begin{remark}
		For any root of unity $\zeta$, observe that $\bbQ[q, q\inv, (q_i-q_i\inv)\inv$, $i\in I]$ is contained in $ A_\zeta$. Let $\Lambda\in \rmT_{A_\zeta}$. Applying the product formula \eqref{Shapdet}, with the factors indexed by $1 \le m \le \ell-1$, to the diagonal $U_\zeta$-module $\overline{M}_\zeta(\Lambda)$, we likewise obtain a finite decreasing sequence of $U_\zeta$-submodules together with a corresponding sum formula for $\overline{M}_\zeta(\Lambda)$. 
	\end{remark}

	\section{Dimension growth and Gelfand-Kirillov dimension}\label{DimensionGrowth}
	In this section, we study the growth of the dimension of irreducible representations of  $U_\zeta$, as the order $\ell$ of the root of unity $\zeta$ tends to infinity (with $\zeta$ varying accordingly), and establish a connection with the Gelfand-Kirillov dimension of irreducible representations in the generic case.
	
	\subsection{Polynomial growth} \label{Polynomialgrowth} Let $f: \bbN \rightarrow \bbN$ be a function.  The \emph{polynomial growth} of $f$ is a number in $\bbR\cup \{\infty \}$ defined by 
	\[ d_f = \limsup_{m\rightarrow \infty} \frac{\log f(m)}{\log m}.  \]
	
	Let $\bfF$ be any field of characteristic $0$. Let $\calA$ be a finitely generated, unital, associated  $\bfF$-algebra and $V\subset \calA$ be a generating $\bfF$-subspace. Set $\calA_n = \sum_{i=0}^nV^i \subset \calA$ with $V^0 = \bfF.1$ and $f_\calA(n)= \dim_\bfF \calA_n$. Then the \emph{Gelfand-Kirillov (GK) dimension} of $\calA$ is defined by $d_{f_\calA}$. Similarly, for a finitely generated $\calA$-module $M$, we choose $M^0$ to be a finite dimensional $\bfF$-subspace of $M$ generating $M$ as an $\calA$-module. Set $M^n = V^nM^0$ and $f_M(n) = \dim_\bfF M^n$. Then the \emph{GK-dimension} of $M$ is defined as $d_{f_M}$. Note that $d_{f_\calA}$ and $d_{f_M}$ is independent of the choice of the generating subspaces. Therefore, we shall denote the GK-dimensions $d_{f_\calA}$ and $d_{f_M}$ by $d(\calA)$ and $d_\calA(M)$ (or simply $d(M)$), respectively. For more general theory one can refer to \cite{KL00}. 
	
	\vspace{.3cm}
	The following results can be found in  \cite[Appendix A.3.6]{Jos95}.
	
	\begin{lemma}\label{GKdimgeneralprop}
		Let $N\subset M$ be finitely generated $\calA$-modules and $V$ a finite dimensional $\calA$-module. Then 
		\begin{enumerate}[{\rm (1)}]
			\item $d_\calA( M) \geqslant \max\{d_\calA(N), d_\calA(M/N) \}$. 
			\item $d_\calA( M) = \max\{ d_\calA(\calA m_i)\;|\; i =1, \dots, r\}$, for any finite set $m_1, \dots, m_r$ of generators of $M$. 
			\item If $\calA$ is a Hopf algebra with its antipode being surjective, then 
			\[\pushQED{\qed}
			d_\calA (M\otimes V) = d_\calA (M) = d_\calA (V\otimes M).\qedhere 
			\popQED
			\]
		\end{enumerate}	
	\end{lemma}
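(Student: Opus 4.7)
The plan is to treat (1), (2), and (3) in turn via the standard filtration framework of \cite{KL00}. Fix a finite-dimensional generating subspace $V_\calA\subset\calA$ containing $1$.

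For (1), I would extend a finite-dimensional generating subspace $N^0$ of $N$ to one $M^0\supset N^0$ of $M$. Then $V_\calA^n N^0\subset V_\calA^n M^0$, while the projection $\pi:M\twoheadrightarrow M/N$ satisfies $V_\calA^n\pi(M^0)=\pi(V_\calA^n M^0)$, so both $\dim V_\calA^n N^0$ and $\dim V_\calA^n\pi(M^0)$ are bounded above by $\dim V_\calA^n M^0$; passing to $\limsup\log(-)/\log n$ yields the two inequalities. Part (2) follows directly: each $\calA m_i$ is a submodule, so by (1) one has $\max_i d_\calA(\calA m_i)\leq d_\calA(M)$, and conversely taking $M^0=\Span\{m_1,\ldots,m_r\}$ gives $\dim V_\calA^n M^0\leq r\max_i\dim V_\calA^n m_i$, with the multiplicative constant $r$ absorbed into the $\limsup$.

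For the upper bound in (3), enlarge $V_\calA$ so that $\Delta(V_\calA)\subset V_\calA\otimes V_\calA$; this is possible since $\Delta(V_\calA)$ lies in some $V_\calA^k\otimes V_\calA^k$, which may then be renamed. Inductively $\Delta(V_\calA^n)\subset V_\calA^n\otimes V_\calA^n$, and since $V$ is a finite-dimensional $\calA$-module we have $V_\calA^n V\subset V$ for all $n$. Taking $M^0\otimes V$ as a generating subspace of $M\otimes V$ under the diagonal action,
\[
V_\calA^n(M^0\otimes V)\subset \Delta(V_\calA^n)\cdot(M^0\otimes V)\subset V_\calA^n M^0\otimes V,
\]
so $\dim V_\calA^n(M^0\otimes V)\leq(\dim V)\cdot\dim V_\calA^n M^0$, giving $d_\calA(M\otimes V)\leq d_\calA(M)$; the same bookkeeping yields $d_\calA(V\otimes M)\leq d_\calA(M)$.

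The reverse inequalities are the main obstacle, and this is where the hypothesis on $S$ enters. Equip the finite-dimensional dual $V^{\ast}$ with the $\calA$-module structure $(a.f)(v)=f(S(a)v)$; then the coevaluation $\eta:\bfF\to V\otimes V^{\ast}$, $1\mapsto\sum v_i\otimes v_i^{\ast}$, is $\calA$-linear (via the antipode identity $\sum a_{(1)}S(a_{(2)})=\epsilon(a)$) and injective, yielding an $\calA$-module embedding $M\hookrightarrow M\otimes V\otimes V^{\ast}$. Combining (1) with the easy direction of (3) applied to $M\otimes V$ and the finite-dimensional module $V^{\ast}$,
\[
d_\calA(M)\leq d_\calA(M\otimes V\otimes V^{\ast})\leq d_\calA(M\otimes V),
\]
closing the loop. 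The symmetric equality $d_\calA(V\otimes M)=d_\calA(M)$ is handled analogously using the second coevaluation $\bfF\to V^{\ast}\otimes V$, whose $\calA$-linearity rests on the identity $\sum a_{(2)}S(a_{(1)})=\epsilon(a)$; this is precisely the point at which surjectivity (equivalently, bijectivity in the settings relevant to this paper) of $S$ is invoked.
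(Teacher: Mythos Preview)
The paper does not supply its own proof of this lemma: it simply cites \cite[Appendix A.3.6]{Jos95} and places the \texttt{\textbackslash qed} inside the statement. So there is nothing to compare against, and your write-up is a reasonable reconstruction of the standard argument.

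Parts (1), (2), and the upper bound in (3) are fine. For the reverse inequality in (3) there is one technical slip. The identity you invoke for the second coevaluation, $\sum a_{(2)}S(a_{(1)})=\epsilon(a)$, does \emph{not} follow from the Hopf algebra axioms together with surjectivity (or even bijectivity) of $S$; in fact it is equivalent to $S$ being the antipode of the co-opposite Hopf algebra, hence to $S^{2}=\mathrm{id}$. The correct construction uses the \emph{right} dual ${}^{*}V$, i.e.\ $V^{*}$ with action $(a.f)(v)=f(S^{-1}(a)v)$; then the coevaluation $\bfF\to{}^{*}V\otimes V$ is $\calA$-linear because $\sum a_{(2)}S^{-1}(a_{(1)})=\epsilon(a)$, which is exactly the antipode identity in $H^{\mathrm{cop}}$ and holds once $S$ is bijective. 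With this correction your embedding $M\hookrightarrow{}^{*}V\otimes V\otimes M$ goes through and the argument closes. Note also that what is genuinely used is bijectivity of $S$ rather than mere surjectivity; in the quantum-group setting of the paper $S$ is bijective, so this causes no trouble, but the hypothesis as literally stated is slightly weaker than what your argument consumes.
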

	
	For any finite dimensional $U_q$-module $M$, it is clear that $d_{U_q}(M) = 0$. Suppose that $M$ is a highest weight $U_q$-module. Since $M$ is generated by its highest weight vector under the action of $U_q^-$,  we have $d_{U_q}(M) = d_{U_q^-}(M)$. For the Verma module $M_q(\Lambda)$, the GK-dimension is given by  $d_{U_q}(M_q(\Lambda)) = |\Phi^+|$.  Hence it follows that  $d_{U_q}(L_q(\Lambda)) \leqslant |\Phi^+|$. 
	
	\vspace{.3cm}
	A more detailed discussion of the GK-dimension formula for  $L_q(\Lambda)$ will be presented in Section \ref{minGK}. In what follows, we turn our attention to the dimension growth of irreducible representations at roots of unity.
	
	\subsection{Dimension growth and GK-dimension} 
	Denote by $u_\zeta$ the small quantum group, defined as the quotient of $U_\zeta$ by the ideal generated by the $\ell$-th powers $E_\alpha^\ell$, $F_\alpha^\ell$, $\alpha\in \Phi^+$ and $K_{\alpha_i}^\ell$, $i\in I$. It has the triangular decomposition $u_\zeta = u_\zeta^-u_\zeta^0u_\zeta^+$ with the obvious definitions of the three parts. Define 
	$\hat{u}_\zeta = u_\zeta^-U_\zeta^0u_\zeta^+. $
	Then we have 
	\begin{lemma}
		Let $L$ be a simple $\hat{u}_\zeta$-module. Then there exists a unique diagonal Verma module $\overline{M}_\zeta(\Lambda)$ for some $\Lambda\in \rmT$ such that $L$ is its homomorphic image; we denote this simple quotient by $L_\zeta(\Lambda)$.
	\end{lemma}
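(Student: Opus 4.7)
The plan is to extract a highest weight vector from $L$ in the usual triangular fashion and invoke the universal property of the diagonal Verma module; the subtlety, compared with the classical case, is that $\hat u_\zeta$ contains the infinite-dimensional Cartan $U_\zeta^0$, so a weight decomposition of $L$ is not automatic and must be obtained from Schur-type arguments using the center.

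First I would show that $L$ decomposes into $U_\zeta^0$-weight spaces. Since $\hat u_\zeta$ has countable $\bbC$-dimension and $L$ is cyclic, $L$ has countable $\bbC$-dimension, so the Dixmier version of Schur's lemma applies and every central element of $\hat u_\zeta$ acts on $L$ by a scalar. In particular each $K_{\alpha_i}^\ell$ acts by some $c_i \in \bbC^\times$. Thus $K_{\alpha_i}$ satisfies the polynomial $X^\ell - c_i$, which is separable over $\bbC$ because $\ell \ge 3$ is invertible, so each $K_{\alpha_i}$ is diagonalizable; since the $K_{\alpha_i}$ commute, they are simultaneously diagonalizable, giving $L = \bigoplus_{\Lambda \in \rmT} L_\Lambda$.

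Next I would produce a weight vector killed by the augmentation ideal $u_\zeta^+_+$ of $u_\zeta^+$. The algebra $u_\zeta^+$ is finite-dimensional, and the PBW monomials $E^{\underline m}$ with $\underline m\neq 0$ that span $u_\zeta^+_+$ all have $Q^+$-weight in the finite set $\{\sum m_i\beta_i \mid 0\le m_i<\ell\}\setminus\{0\}$; since multiplication adds weights, the product of sufficiently many elements of $u_\zeta^+_+$ vanishes, i.e. $u_\zeta^+_+$ is a nilpotent ideal. For any nonzero weight vector $v\in L_\mu$, the image $u_\zeta^+ v$ lies in a finite direct sum of weight spaces, and by nilpotency some nonzero element $v_\Lambda\in u_\zeta^+\cdot v$ (which remains a weight vector of some weight $\Lambda$) is annihilated by $u_\zeta^+_+$. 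Describing $\overline{M}_\zeta(\Lambda)$ as the induced $\hat u_\zeta$-module $\hat u_\zeta\otimes_{U_\zeta^0 u_\zeta^+}\bbC_\Lambda$ (with $u_\zeta^+_+$ acting trivially and $U_\zeta^0$ by $\Lambda$, which is well-defined since $E_\alpha^\ell$ and $F_\alpha^\ell$ act by $0$ on $\overline{M}_\zeta(\Lambda)$ by centrality and the defining quotient), the universal property gives a nonzero $\hat u_\zeta$-homomorphism $\overline{M}_\zeta(\Lambda)\to L$ sending $\bar v_\Lambda\mapsto v_\Lambda$, which is surjective by the simplicity of $L$.

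For uniqueness of $\Lambda$, suppose $v_\Lambda\in L_\Lambda$ and $v_{\Lambda'}\in L_{\Lambda'}$ are both nonzero and killed by $u_\zeta^+_+$. Using the triangular decomposition $\hat u_\zeta = u_\zeta^-\, U_\zeta^0\, u_\zeta^+$, the fact that $U_\zeta^0$ and $u_\zeta^+$ act on $v_\Lambda$ by scalars (respectively in $\bbC$ and by $0$ on the augmentation ideal) gives $\hat u_\zeta\cdot v_\Lambda = u_\zeta^-\cdot v_\Lambda$, whose nonzero weights all have the form $\Lambda q^{-\beta}$ for $\beta\in Q^+$; simplicity forces $L = u_\zeta^-\cdot v_\Lambda = u_\zeta^-\cdot v_{\Lambda'}$, so $\Lambda' = \Lambda q^{-\beta}$ and $\Lambda = \Lambda' q^{-\beta'}$ for some $\beta,\beta'\in Q^+$, whence $\beta = \beta' = 0$ and $\Lambda = \Lambda'$. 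This yields the uniqueness of the diagonal Verma module whose simple quotient is $L$, and justifies denoting it $L_\zeta(\Lambda)$.

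The only real obstacle is Step~1: verifying that a simple $\hat u_\zeta$-module carries a weight decomposition despite the Cartan being infinite-dimensional. This is precisely where the central subalgebra $\bbC[K_{\alpha_i}^{\pm\ell}]$ of $Z_{\zeta,0}$, together with Dixmier's countability argument, does the essential work; once the weight decomposition is in hand, the remaining steps are formally parallel to the classical highest-weight argument for finite-dimensional modules.
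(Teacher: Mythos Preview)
Your proposal is correct and follows essentially the same route as the paper's proof. The only difference is packaging: the paper opens with ``it is clear that $L$ is finite dimensional'' (which already hides the Schur-on-central-elements step you spell out via Dixmier and the separable polynomial $X^\ell-c_i$), and then runs the identical highest-weight extraction using $E_\alpha^\ell=0$; your version simply makes that first step explicit and skips the intermediate conclusion of finite-dimensionality.
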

	\begin{proof}
		It is clear that $L$ is finite dimensional. Since the minimal polynomial of each operator $K_{\alpha_i}$ on $L$ has distinct roots, $L$ is a weight module of $\hat{u}_\zeta$. Note that all operators $E_\alpha$, $\alpha\in \Phi^+$ are nilpotent on $L$ since $E_\alpha^\ell =0$. Therefore, we can find a nonzero vector $v\in L$ (unique up to scalar) such that $E_\alpha.v=0, K_{\alpha_i}.v=a_iv, i\in I$ for some scalars $a_i\in \bbC^\times$. It follows that $L = \hat{u}_\zeta.v$ due to the simplicity of $L$.  Therefore, $L$ is the homomorphic image of $\overline{M}_\zeta(\Lambda)$ with $\Lambda\in \rmT$ determined by $K_{\alpha_i}\mapsto a_i$ for each $i\in I$.  
	\end{proof}

	For any $\Lambda\in \rmT_q$, there exists a positive integer $\ell_0$ such that $\Lambda\in \rmT_{A_\zeta}$ for all roots of unity $\zeta$ of order $\ell \geqslant \ell_0$. 
	For technical convenience, we have assumed at the beginning of the paper that $\ell$ is an odd positive integer, along with certain standard simplifying assumptions.

	Now we fix $\zeta = e^{\frac{2\pi\imath}{\ell}} $ with $\imath = \sqrt{-1}\in \bbC$. For $ \Lambda \in \rmT_q $, we say that a positive integer $ \ell $ is \emph{admissible} with respect to $ \Lambda$  if $\ell$ satisfies the above assumptions and $\Lambda \in \rmT_{A_\zeta} $.
	This defines a function 
	\[\left\{ \text{admissible}\; \ell\right\} \rightarrow \bbN, \quad \ell \mapsto \dim L_{\zeta}(\Lambda).  \]
	Then the polynomial growth of this function can be described as follows.
	
	\begin{theorem}\label{mainthm}
		For any  $\Lambda \in \rmT_q$, we have 
		\[ \lim_{\ell\rightarrow \infty} \frac{\log \dim L_{\zeta}(\Lambda)}{\log \ell} = d_{U_q} \left(L_q(\Lambda)\right), \]
		where the limit is taken over all admissible integers $\ell$ with respect to $\Lambda$.
	\end{theorem}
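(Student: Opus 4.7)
The plan is to match the asymptotic growth rate of $\dim L_\zeta(\Lambda)$ with $d_{U_q}(L_q(\Lambda))$ by comparing the Jantzen filtrations of $M_q(\Lambda)$ and $\overline{M}_\zeta(\Lambda)$, following the characteristic-$p$ strategy of \cite{BL18}.

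First I would use the Jantzen sum formula \eqref{Jantzenformula} for $M_q(\Lambda)$ together with its analogue for $\overline{M}_\zeta(\Lambda)$ (cf.\ the Remark following \eqref{Jantzenformula}). The generic filtration is governed by $\calT_\Lambda$, while the root-of-unity filtration is governed by a set $\calT_\Lambda^\zeta\subset\{1,\dots,\ell-1\}\times\Phi^+$ of pairs where the integrality equality holds only after specialization $q\mapsto\zeta$. The key quantitative input is that for each $\alpha\in\Phi^+$, the condition $\Lambda_\zeta(K_{2\alpha})\zeta^{2(\rho,\alpha)-m(\alpha,\alpha)}=1$ pins $m$ down to at most one residue class modulo $\ell$, so $|\calT_\Lambda^\zeta\setminus\calT_\Lambda|$ is uniformly $O(1)$ in $\ell$. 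This bounds the ``accidental'' contributions to the root-of-unity formula.

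Next I would derive the character of $L_\zeta(\Lambda)$ recursively, using the Jantzen sum formula to strip off Verma-quotient pieces from $\overline{M}_\zeta(\Lambda)$ one by one. For weights $\beta$ with $\height(\beta)\ll\ell$, the two recursions (generic and root-of-unity) involve essentially the same Verma modules, so $\dim L_\zeta(\Lambda)_{\Lambda q^{-\beta}}$ agrees with $\dim L_q(\Lambda)_{\Lambda q^{-\beta}}$, up to corrections from weight-space collisions at $q=\zeta$ and the bounded accidental resonances. This should yield matching bounds $c\ell^d\le \dim L_\zeta(\Lambda)\le C\ell^d$ with $d=d_{U_q}(L_q(\Lambda))$, where both sides are comparable to the restricted PBW box sum $\sum_{\underline m\in\{0,\dots,\ell-1\}^N}\dim L_q(\Lambda)_{\Lambda q^{-\sum m_i\beta_i}}$ (with $\beta_i$ the positive roots and $N=|\Phi^+|$). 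A standard Hilbert-series argument then identifies this box sum with $\ell^d$, since the box of side $\ell$ in $\bbR^{N}$ is sandwiched between the height-$\ell$ and height-$N\ell$ simplices, both of whose weight sums in $L_q(\Lambda)$ have leading term of order $\ell^d$ by the Hilbert polynomial of the graded cyclic $U_q^-$-module $L_q(\Lambda)$.

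The main obstacle is two-fold. First, precisely controlling the accidental resonances in $\calT_\Lambda^\zeta\setminus\calT_\Lambda$: although their count is $O(1)$, each contributes a Verma-like piece of total dimension $\ell^{|\Phi^+|}$ to the sum formula, and one must track through the recursion that, after subtracting lower simple quotients, these contributions cancel or remain subleading against the $\ell^d$ target. Second, weight-space collisions at $q=\zeta$ --- where distinct $\beta\ne\beta'\in Q^+$ can yield the same element of $\rmT$ --- require careful accounting when converting $\bbC(q)$-weight multiplicities of $L_q(\Lambda)$ into $\bbC$-multiplicities for $L_\zeta(\Lambda)$, and are naturally handled by fixing a fundamental domain for the wrap-around action on $Q^+$ induced by specialization.
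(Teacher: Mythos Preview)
Your proposal follows the right strategy from \cite{BL18}, but it contains a genuine gap in the mechanism for matching dimensions, and the obstacles you identify at the end are symptoms of this gap rather than minor technicalities.

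The paper does \emph{not} argue via a recursive character computation. Instead, it proves directly (Lemma~\ref{dimensionformula}) that there is a constant $m_\Lambda$ with
\[
\dim_{\bbC(q)} L_q(\Lambda)_j \;=\; \dim_{\bbC} L_\zeta(\Lambda)_j \qquad\text{for all } j < \ell/m_\Lambda,
\]
where the grading is by height. The argument is: (i) show that the $T$-valuations of $\det\calS_{q,j}^{\Lambda t^\rho}$ and $\det\calS_{\zeta,j}^{\Lambda t^\rho}$ agree for such $j$; (ii) combine this with the semicontinuity inequality $\dim F^iM_q(\Lambda)_j \ge \dim F^iM_\zeta(\Lambda)_j$ to force term-by-term equality, in particular for $i=1$. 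Step (i) requires knowing not just that the accidental resonances in $\calT_\Lambda^\zeta\setminus\calT_\Lambda$ are $O(1)$ in number, but that they occur only at \emph{large} $m$ (comparable to $\ell$). This is where the paper invokes an elementary complex-analytic fact (Lemma~\ref{rootunity}): if $\Lambda(K_{2\alpha})\in\bbC(q)$ specializes to a root of unity for infinitely many $\zeta$, then $\Lambda(K_{2\alpha})=\zeta_{0,\alpha}q^{m_\alpha}$ for some fixed $\zeta_{0,\alpha}\in\Tor(\bbC^\times)$ and $m_\alpha\in\bbZ$, which pins the accidental $m$ to a value $\asymp \ell/b_\alpha$, hence outside the range $m\le j<\ell/m_\Lambda$ for suitable $m_\Lambda$.

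Your recursive plan would instead have to propagate each accidental resonance through a tower of Jantzen-type identities. As you yourself note, a single spurious $(m,\alpha)$ contributes a Verma piece of dimension $\sim\ell^{|\Phi^+|}$ to the sum formula, and nothing in your outline explains why this is subleading after cancellation against lower simples --- indeed it need not be, if the resonance occurs at small $m$. The paper's approach eliminates this issue entirely by proving there are \emph{no} accidental resonances in the relevant range, via Lemma~\ref{rootunity}. Likewise, the weight-space collision problem you raise disappears once one works with the height grading, since the graded pieces $L_q(\Lambda)_j$ and $L_\zeta(\Lambda)_j$ are defined independently of any identification of weights under specialization. Once exact matching of graded dimensions in low degree is established, the sandwiching argument you sketch (PBW-box versus height-simplex) is essentially the paper's final step.
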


	The proof  of Theorem \ref{mainthm} will be carried out in the subsequent subsections following an approach adapted from Bezrukavnikov and Losev \cite{BL18} in the context of restricted Lie algebras,  which is built on a method of Etingof for rational Cherednik algebras.

	\subsection{Proof of Theorem \ref{mainthm}} 
	Let $h=h(q) \in \bbC(q)$ be a rational function. Then $h$ defines a holomorphic function on $\bbC\setminus\Sing(h)$, where $\Sing(h)\subset \bbC$ is the finite set of all \emph{poles} of $h$. Let $\Tor(\bbC^\times)$ denote the \emph{torsion subgroup} of the multiplicative group $\bbC^\times$, i.e., the set of all roots of unity in $\bbC$. 
	
	The following elementary fact from complex analysis will be useful. 
	\begin{lemma}\label{rootunity}
		Suppose that $h$ maps infinitely many elements of $\Tor(\bbC^\times)$ into $\Tor(\bbC^\times)$. Then there exist $\zeta_0\in \Tor(\bbC^\times)$ and $m\in \bbZ$ such that $h(q) = \zeta_0 q^{m}$ for all $q\in \bbC\setminus\Sing(h)$. 
	\end{lemma}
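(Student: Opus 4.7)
My plan is to interpret the hypothesis algebro-geometrically, viewing the graph of $h$ as an algebraic curve in the torus $\bbG_m\times\bbG_m$, and then to apply the Ihara-Serre-Tate theorem on torsion points in subvarieties of tori.

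First I dispatch the trivial case: if $h$ is constant, then $h\equiv h(\omega)\in\Tor(\bbC^\times)$ for any $\omega$ as in the hypothesis, and the conclusion holds with $m=0$. So assume $h$ is non-constant. Writing $h=p/r$ with coprime $p,r\in\bbC[q]$, I let $\Gamma_h\subset\bbG_m\times\bbG_m$ denote the Zariski closure of $\{(q,h(q)) : q\in\bbG_m,\ h(q)\in\bbG_m\}$. As the closure of the graph of a non-constant rational map, $\Gamma_h$ is an irreducible one-dimensional subvariety. The hypothesis then says precisely that $\Gamma_h$ contains infinitely many points of the torsion subgroup $\Tor(\bbC^\times)\times\Tor(\bbC^\times)$ of $\bbG_m^2$, and because $\Gamma_h$ is an irreducible curve any infinite subset is Zariski-dense in it.

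I then invoke the Ihara-Serre-Tate theorem (the curve case of the Manin-Mumford conjecture for algebraic tori): any irreducible subvariety of $\bbG_m^n$ containing a Zariski-dense set of torsion points is a torsion translate of an algebraic subtorus. Applied to $\Gamma_h$, this gives $\Gamma_h=T\cdot H$ for some torsion point $T\in \Tor(\bbC^\times)^2$ and some one-dimensional algebraic subtorus $H\subset\bbG_m^2$. Every such $H$ has the form $\{(x,y):x^a y^b=1\}$ for a primitive $(a,b)\in\bbZ^2\setminus\{0\}$, so $\Gamma_h$ is cut out by $x^a y^b=\zeta$ for some $\zeta\in\Tor(\bbC^\times)$. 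Since $\Gamma_h$ is the graph of the rational function $h$, the first projection to $\bbG_m$ has generic degree one, while the projection from $\{x^a y^b=\zeta\}$ has degree $|b|$; thus $|b|=1$. Solving for $y$ then yields $h(q)=\zeta_0 q^m$ with $\zeta_0\in\Tor(\bbC^\times)$ and $m\in\bbZ$, as required.

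The main obstacle is the invocation of Ihara-Serre-Tate, which is a non-trivial input from arithmetic geometry. Since the authors describe the statement as an elementary fact from complex analysis, they likely have in mind a self-contained analytic route: (i) observe that $|h(\omega)|=1$ at infinitely many $\omega$ on the unit circle, and combine with real-analyticity on each of the finitely many arcs of $\{|q|=1\}\setminus\Sing(h)$ to deduce $|h|\equiv 1$ there; equivalently, $h(q)\cdot\overline{h}(1/q)=1$ as rational functions, where $\overline{h}$ denotes the rational function obtained by conjugating the coefficients of $h$; (ii) match zeros and poles of the two sides to conclude that $h$ has the finite Blaschke form $c\, q^M \prod_k \bigl((q-\alpha_k)/(1-\overline{\alpha_k}\,q)\bigr)^{n_k}$ with $|c|=1$ and $|\alpha_k|\neq 1$; and (iii) rule out nontrivial Blaschke factors by means of the Diophantine fact that $\arg(\omega-\alpha)/\pi\in\bbQ$ for only finitely many $\omega\in\Tor(\bbC^\times)$ whenever $\alpha\neq 0$ lies off the unit circle, after which $|c|=1$ together with $c\cdot\omega^M\in\Tor(\bbC^\times)$ for some $\omega\in\Tor(\bbC^\times)$ forces $c\in\Tor(\bbC^\times)$. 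Step (iii) is the technical heart of this alternative route.
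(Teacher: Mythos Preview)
Your proof via the Ihara--Serre--Tate theorem is correct and complete. It is a genuinely different route from the paper's, which follows the analytic line you sketch as an alternative.

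The paper derives the functional equation $\overline{h(q)}=h(\bar q^{-1})^{-1}$ (equivalently $h(q)\,\overline{h}(1/q)\equiv 1$) and then asserts that ``the nonzero poles and zeros of $h$ coincide,'' concluding that $h$ is a product of $q^m$ and degenerate Blaschke factors $\tfrac{q-a}{1-\bar a q}$ with $a\in\Tor(\bbC^\times)$ (each such factor being the constant $-a$). But the functional equation only shows that a nonzero zero $a$ of $h$ is matched by a pole at $1/\bar a$; this places $h$ among the finite Blaschke products $c\,q^m\prod_k\frac{q-\alpha_k}{1-\bar\alpha_k q}$ with $|c|=1$, not just the monomials. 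Ruling out genuine Blaschke factors with $|\alpha_k|\ne 1$ is precisely your step~(iii), and the paper does not supply it. A single such factor, say $h(q)=\tfrac{2q-1}{2-q}$, already satisfies the functional equation yet sends only finitely many roots of unity to roots of unity, so this step is not vacuous.

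In short: your argument is complete where the paper's is not; the trade-off is that you invoke a substantial theorem from arithmetic geometry, whereas a fully self-contained proof would have to carry out step~(iii) explicitly (for instance by a Galois-orbit counting argument comparing $\varphi(n)$ with $\deg h$).
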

	\begin{proof}
		Note that $\overline{h(q)} = h(\bar{q}\inv)\inv$ for infinitely many $q\in \Tor(\bbC^\times)$, so by analytic continuation, the identity holds for all $q\in \bbC\setminus\Sing(h)$. This implies that  the nonzero poles and zeros of $h$ coincide, and hence $h$ must be a product of functions of the form $q^m$ and $\frac{q-a}{1-\bar{a}q}$ where $m\in \bbZ$ and $a\in \Tor(\bbC^\times)$. This completes the proof.
	\end{proof}
	
	Observe that $U_q^-$, and hence $M_q(\Lambda)$ and its specialization $M_\zeta(\Lambda)$,  carry a natural $\bbN$-grading given by $\deg F_i = 1$ for all $i \in I$. The corresponding irreducible modules $L_q(\Lambda)$ and $L_\zeta(\Lambda)$ then inherit this grading as their respective graded quotients. We write $M_j$ for the \emph{$j$-th graded component} of a graded module $M$. Then we have the following result.
	
	\begin{lemma}\label{dimensionformula}
		Let $\Lambda\in \rmT_q$, and assume that $\ell$ is a sufficiently large admissible integer. Then there exists a positive integer $m_\Lambda$, independent of $\ell$, such that 
		\[\dim_{\bbC(q)}L_q(\Lambda)_j = \dim_{\bbC}L_\zeta(\Lambda)_j \]
		for all $j< \frac{\ell }{m_\Lambda}$.
	\end{lemma}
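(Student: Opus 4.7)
The plan is to compare $L_q(\Lambda)$ and $L_\zeta(\Lambda)$ via an intermediate $A_\zeta$-form of $M_q(\Lambda)$, where $A_\zeta=\bbC[q]_{(q-\zeta)}$. Since the $U_\zeta$-submodule of $M_\zeta(\Lambda)$ generated by the $F_\alpha^\ell\bar v_\Lambda$ has lowest degree $\geq\ell$, we have $\overline{M}_\zeta(\Lambda)_j=M_\zeta(\Lambda)_j$ for $j<\ell$, and both have the same dimension as $M_q(\Lambda)_j$ by the PBW theorem. Set $N_{A_\zeta}:=\ker\bigl(M_{A_\zeta}(\Lambda)\to L_q(\Lambda)\bigr)$, which is $A_\zeta$-saturated, and $L_{A_\zeta}(\Lambda):=M_{A_\zeta}(\Lambda)/N_{A_\zeta}$, which is graded-wise free over $A_\zeta$. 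Because $N_{A_\zeta}\otimes_{A_\zeta}\bbC$ is a proper $U_\zeta$-submodule of $M_\zeta(\Lambda)$, it is contained in the maximal submodule $N_\zeta$, and the induced surjection $L_{A_\zeta}\otimes_{A_\zeta}\bbC\twoheadrightarrow L_\zeta(\Lambda)$ gives $\dim_{\bbC(q)}L_q(\Lambda)_j\geq\dim_\bbC L_\zeta(\Lambda)_j$ for $j<\ell$.

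To obtain the reverse inequality I would show $(N_{A_\zeta}\otimes_{A_\zeta}\bbC)_j=(N_\zeta)_j$ for $j<\ell/m_\Lambda$. Recall that $N_\zeta$ (resp.\ $N_q$) is generated as a $U_\zeta$-module (resp.\ $U_q$-module) by singular vectors at weights $\Lambda_\zeta q^{-m\alpha}$ (resp.\ $\Lambda q^{-m\alpha}$) indexed by pairs in $\calT_{\Lambda,\zeta}:=\{(m,\alpha)\in\{1,\dots,\ell-1\}\times\Phi^+:\zeta^{2(\rho,\alpha)-m(\alpha,\alpha)}\Lambda_\zeta(K_{2\alpha})=1\}$ (resp.\ in $\calT_\Lambda$), via Lemma \ref{Verma}. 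The heart of the argument is the following:

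\emph{Key Claim.} There exists $m_\Lambda\in\bbN_+$, depending only on $\Lambda$, such that for every sufficiently large admissible $\ell$,
\[\calT_{\Lambda,\zeta}\cap\{(m,\alpha):m<\ell/m_\Lambda\}=\calT_\Lambda\cap\{(m,\alpha):m<\ell/m_\Lambda\}.\]
I would prove this root by root. For each $\alpha\in\Phi^+$, set $h_\alpha(q):=\Lambda(K_{2\alpha})\in\bbC(q)^\times$. By Lemma \ref{rootunity}, either $h_\alpha$ sends only finitely many roots of unity into $\Tor(\bbC^\times)$, so that $\alpha$ contributes to $\calT_{\Lambda,\zeta}$ (and to $\calT_\Lambda$) only for finitely many exceptional $\ell$; or $h_\alpha(q)=\zeta_0q^{-N_\alpha}$ with $\zeta_0\in\Tor(\bbC^\times)$ and $N_\alpha\in\bbZ$. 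In the latter case, if $\zeta_0\neq1$ then $\calT_\Lambda$ has no pair with this $\alpha$, while a pair $(m,\alpha)\in\calT_{\Lambda,\zeta}$ arises only when $\mathrm{ord}(\zeta_0)\mid\ell$, which fails for $\ell$ coprime to $\mathrm{ord}(\zeta_0)$; if $\zeta_0=1$, then $\calT_\Lambda$ contains the unique pair $(m_{0,\alpha},\alpha)$ with $m_{0,\alpha}:=\bigl(2(\rho,\alpha)-N_\alpha\bigr)/(\alpha,\alpha)\in\bbN_+$, and the analogous condition at $q=\zeta$ reduces to $m\equiv m_{0,\alpha}\pmod{\ell/\gcd(\ell,(\alpha,\alpha))}$. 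Choosing $\ell$ coprime to each $(\alpha,\alpha)$ and $m_\Lambda$ to exceed every $m_{0,\alpha}$ (a finite set) forces $m=m_{0,\alpha}$ as the unique solution with $m<\ell/m_\Lambda$, and taking a uniform $m_\Lambda$ over $\Phi^+$ yields the claim.

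Finally, each singular vector of $M_q(\Lambda)$ corresponding to $(m,\alpha)\in\calT_\Lambda$ admits, after multiplication by a suitable $A_\zeta$-unit, a primitive $A_\zeta$-integral lift in $N_{A_\zeta}$ whose reduction modulo $(q-\zeta)$ is a nonzero singular vector of $M_\zeta(\Lambda)$ at the matching weight. By the Key Claim these lifts exhaust all singular vectors of $M_\zeta(\Lambda)$ of degree below $\ell/m_\Lambda$, so they generate $(N_\zeta)_j$ for $j<\ell/m_\Lambda$; hence $(N_{A_\zeta}\otimes_{A_\zeta}\bbC)_j=(N_\zeta)_j$ and the surjection $L_{A_\zeta}\otimes_{A_\zeta}\bbC\twoheadrightarrow L_\zeta(\Lambda)$ is an isomorphism in these degrees. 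The main obstacle I anticipate is establishing the Key Claim together with ensuring that the primitive $A_\zeta$-lift specializes to a \emph{nonzero} singular vector that matches (up to scalar) the corresponding one in $M_\zeta(\Lambda)$; both require careful control of scaling and of the finitely many exceptional $\ell$ coming from the zeros in $\bbC^\times$ of the polynomials $h_\alpha(q)\cdot q^{N_\alpha}-1$.
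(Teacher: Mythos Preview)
Your approach has a genuine gap: you assert that $N_q$ and $N_\zeta$ are generated as modules by singular vectors indexed by $\calT_\Lambda$ and $\calT_{\Lambda,\zeta}$, citing Lemma~\ref{Verma}, but that lemma only produces Verma embeddings and says nothing about whether their images exhaust the maximal submodule. For $N_q$ this does hold, but it requires the full BGG structure theory for $\catO_q$, not Lemma~\ref{Verma} alone; for $N_\zeta$ at a root of unity the submodule lattice of $M_\zeta(\Lambda)$ is governed by affine combinatorics, and there is no direct argument that low-degree singular vectors span $(N_\zeta)_j$ for $j<\ell/m_\Lambda$. Your final paragraph also presupposes that a primitive $A_\zeta$-lift of a singular vector specializes to a \emph{nonzero} singular vector, which amounts to controlling the $A_\zeta$-valuation of Shapovalov coefficients---essentially the determinant analysis you are trying to avoid. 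A secondary issue: your treatment of the Key Claim when $\zeta_0\neq 1$ imposes $\gcd(\ell,\mathrm{ord}(\zeta_0))=1$, which is not part of the admissibility hypothesis.

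The paper sidesteps all of this by working with the Shapovalov form and the Jantzen filtration rather than with generators. The maximal submodule is the radical of $\calS^\Lambda$, so one only needs to compare the coranks of $\calS_{q,j}^{\Lambda}$ and $\calS_{\zeta,j}^{\Lambda}$. Deforming $\Lambda\mapsto\Lambda t^\rho$ and using the determinant formula~\eqref{Shapdet}, one shows $\val\big(\det\calS_{q,j}^{\Lambda t^\rho}\big)=\val\big(\det\calS_{\zeta,j}^{\Lambda t^\rho}\big)$ for $j<\ell/m_\Lambda$; this is essentially your Key Claim, and the $\zeta_0\neq1$ case is handled by bounding $m$ so that the exponent $\kappa_\alpha=a_\alpha/b_\alpha+O(1/\ell)$ stays strictly in $(0,1)$, with no coprimality condition on $\ell$ needed. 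Equality of valuations gives $\sum_i\dim F^iM_q(\Lambda)_j=\sum_i\dim F^iM_\zeta(\Lambda)_j$, and combined with the termwise inequality $\dim F^iM_q(\Lambda)_j\geq\dim F^iM_\zeta(\Lambda)_j$ (specialization can only enlarge the radical), this forces equality for each $i$, in particular $i=1$. No generation statement for $N_\zeta$ is ever required.
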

	\begin{proof}
		Consider an $A_\zeta$-algebra $R$. We primarily focus on two specific $A_\zeta$-algebras: (1) $R=\bfk$, where the $A_\zeta$-algebra structure is given by the natural  inclusion $A_\zeta \hookrightarrow \bfk$; and (2) $R=\bbC$, where $A_\zeta \rightarrow \bbC$ via the specialization $q\mapsto \zeta$. Note that  $\bbC[q, q\inv, (q_i-q_i\inv)\inv, i\in I]\subset A_\zeta$. Then the Verma module $M_R(\Lambda^\prime)$ admits a contravariant form $\calS_R^{\Lambda^\prime}$, which is the specialization of the $R$-deformation form of $\calS$ (see Subsection \ref{Janfiltration}) at $\Lambda^\prime\in \rmT_R$. For the above two cases, we simplify the notations by  $\calS_q^{\Lambda^\prime}$ and $\calS_\zeta^{\Lambda^\prime}$, respectively. Let  $\calS_{R,j}^{\Lambda^\prime}$ denote the restriction of $\calS_R^{\Lambda^\prime}$ to the $j$-th graded component $M_R(\Lambda^\prime)_j$. Consider the one-parameter deformation $M_{R[t^{\pm1}]}(\Lambda t^\rho)$, where $t$ is an independent variable. Similarly, we write  $\calS_q^{\Lambda t^\rho}$ and $\calS_\zeta^{\Lambda t^\rho}$ for $\calS_{\bfk[t^{\pm1}]}^{\Lambda t^\rho}$ and $\calS_{\bbC[t^{\pm1}]}^{\Lambda t^\rho}$, respectively. 
		
		Notice that $L_q(\Lambda)\otimes_{\bbC(q)} \bfk$ (resp. $L_\zeta(\Lambda)$) is the quotient of $M_q(\Lambda)\otimes_{\bbC(q)} \bfk$ (resp. $M_\zeta(\Lambda)$) by the radical of $\calS_q^\Lambda$ (resp. $\calS_\zeta^\Lambda$).  Therefore, it suffices to show that there exists a positive integer $m_\Lambda$, independent of $\ell$, such that the radicals of $\calS_{q,j}^\Lambda$ and of  $\calS_{\zeta, j}^\Lambda$ have the same dimension for all  $j< \frac{\ell }{m_\Lambda}$. 
		
		We first claim that there exists a positive integer $m_\Lambda$, independent of $\ell$, such that 
		\begin{equation}\label{valuation}
			\val\left(\det \calS_{\zeta,j}^{\Lambda t^\rho}\right)=	\val\left(\det \calS_{q,j}^{\Lambda t^\rho}\right) 
		\end{equation}
		for all $j< \frac{\ell }{m_\Lambda}$. Indeed, from (\ref{Shapdet}), we know that for $j<\ell$, 
		\[\det \calS_{\zeta,j}^{\Lambda t^\rho}=\prod_{\nu,\alpha}\prod_{1\leqslant m\leqslant j}\left([m]_{\zeta_\alpha}\Lambda_\zeta t^\rho\left([K_\alpha; (\rho, \alpha)-\frac{m}{2}(\alpha, \alpha)]_\zeta\right)\right)^{\rmp(\nu-m\alpha)},\]
		where the product runs over all $\nu\in Q^+$ of \emph{height} $j$ and $\alpha\in \Phi^+$. The determinant $\det \calS_{q,j}^{\Lambda t^\rho}$ has the same form, after replacing $\zeta$ by $q$. The value of $\val$ at $\det \calS_{\zeta,j}^{\Lambda t^\rho}$  (resp. $\det \calS_{q,j}^{\Lambda t^\rho}$) counts the vanishing order of $\det \calS_{\zeta,j}^{\Lambda t^\rho}$ (resp. $\det \calS_{q,j}^{\Lambda t^\rho}$) at $t=1$. Note that 
		\[\val \left(\Lambda t^\rho([K_\alpha; (\rho, \alpha)-\frac{m}{2}(\alpha, \alpha)]_q)\right) = 1 \]
		if and only if  $\Lambda(K_{2\alpha})q^{2(\rho,\alpha)-m(\alpha,\alpha)} = 1$. 
		Now suppose that there exists some $\alpha\in \Phi^+$, such that for $1\leqslant m \leqslant j$,  $\Lambda(K_{2\alpha})q^{2(\rho,\alpha)-m(\alpha,\alpha)}\neq 1$. Then under specialization $q\mapsto \zeta$, there are  two possible cases as follows: 
		
		$(\textbf{a}).\;$ For all sufficiently large admissible $\ell$, the value $\Lambda_\zeta(K_{2\alpha})$ is not a root of unity. Then 
		\[ \Lambda_\zeta(K_{2\alpha}) \zeta^{2(\rho,\alpha)-m(\alpha,\alpha)}\neq 1 \]
		for $1\leqslant m \leqslant j < \ell$.
		
		$ (\textbf{b}).\;$ There are infinitely many admissible $\ell$ such that $\Lambda_\zeta(K_{2\alpha})$ is a root of unity. Then by Lemma \ref{rootunity}, for $\ell \gg 0$, 
		\[\Lambda_\zeta(K_{2\alpha}) = \zeta_{0,\alpha}\zeta^{m_\alpha} \]
		for some root of unity $\zeta_{0,\alpha}$ and $m_\alpha \in \bbZ$. Write $\zeta_{0,\alpha} = e^{\frac{2\pi\imath a_\alpha}{b_\alpha}}$ with $a_\alpha, b_\alpha\in \bbZ_+$, $\gcd(a_\alpha, b_\alpha)=1$, and $a_\alpha \leqslant b_\alpha$. Then $\Lambda_\zeta(K_{2\alpha}) \zeta^{2(\rho,\alpha)-m(\alpha,\alpha)}=e^{2\pi\imath\kappa_\alpha}$ with 
		\begin{equation}\label{power}
			\kappa_\alpha = \frac{a_\alpha}{b_\alpha} + \frac{m_\alpha+2(\rho,\alpha)-m(\alpha,\alpha)}{\ell}.
		\end{equation}
		Let $\Gamma\subset \Phi^+$ be the set of all roots in $\Phi^+$ satisfying the condition $(\textbf{b})$. For large enough  admissible $\ell$, we can find a positive integer $m_\Lambda$, e.g., $m_\Lambda = 2\max\{d_\alpha\mid\alpha\in \Gamma \}$, independent of $\ell$, such that for $\alpha\in \Gamma$, we have $0<\kappa_\alpha<1$ for all $1\leqslant m \leqslant j < \frac{\ell}{m_\Lambda}$, i.e., $\Lambda_\zeta(K_{2\alpha})\zeta^{2(\rho,\alpha)-m(\alpha,\alpha)}\neq 1$. This completes the proof of the claim. 
		
		Recall the Jantzen filtration introduced in the previous section. For each $j$-th graded component and each $i\in \bbN$, define 
		\[F^iM_\zeta(\Lambda)_j := F^iM_\zeta(\Lambda)\cap M_\zeta(\Lambda)_j \quad\text{and}\quad  F^iM_q(\Lambda)_j := F^iM_q(\Lambda)\cap M_q(\Lambda)_j. \]
		The valuation identity ~\eqref{valuation} implies that for any $j<\frac{\ell }{m_\Lambda}$, 
		\[\sum_{i= 1}^\infty\dim_\bbC F^iM_\zeta(\Lambda)_j = \sum_{i= 1}^\infty\dim_{\bfk} F^iM_q(\Lambda)_j. \]
		Since for each admissible $\ell$ we always have $\dim_{\bfk} F^iM_q(\Lambda)_j \geqslant \dim_\bbC F^iM_\zeta(\Lambda)_j $, it follows that the two sides must be equal term-by-term: 
		\[\dim_{\bfk} F^iM_q(\Lambda)_j = \dim_\bbC F^iM_\zeta(\Lambda)_j  \]
		for all $i$ and all $j< \frac{\ell }{m_\Lambda}$. In particular, this gives $\dim_{\bfk} F^1M_q(\Lambda)_j = \dim_\bbC F^1M_\zeta(\Lambda)_j$, which shows that the radicals of $\calS_{q,j}^\Lambda$ and of  $\calS_{\zeta, j}^\Lambda$ have the same dimension for all $j< \frac{\ell }{m_\Lambda}$. 
	\end{proof}
	\begin{remark}
		As shown in the proof (cf. Case $(\textbf{b})$), it is necessary for each $\ell$ to fix a specific primitive $\ell$-th root of unity $\zeta$ rather than choosing one arbitrarily. Otherwise, as the expression for $\kappa_\alpha$ in Equation ~\eqref{power} suggests, the choice of $m_\Lambda$ may depend on $\ell$.
	\end{remark}
	
	Now we proceed with the proof of Theorem \ref{mainthm}. 
	
	\begin{proof}[Proof of Theorem \ref{mainthm}]
		Consider the negative part $u_\zeta^-$ of the small quantum group $u_\zeta$, which is defined as the image of $U_\zeta^-$ in $u_\zeta$. Recall that $u_\zeta^-$ is of dimension $\ell^N$, with a basis given by 
		\[\left\{F^{\underline{m}}\mid \underline{m}\in \{0,1,\ldots, \ell-1 \}^N\right\}. \]
		The diagonal Verma module $\overline{M}_\zeta(\Lambda)$ is isomorphic to $u_\zeta^-$ as a $u_\zeta^-$-module. Since the simple $U_\zeta$-module $L_\zeta(\Lambda)$ is a quotient of $\overline{M}_\zeta(\Lambda)$, it follows that $\dim L_\zeta(\Lambda) \leqslant \dim \overline{M}_\zeta(\Lambda) = \dim u_\zeta^-$. By Lemma \ref{dimensionformula}, we can fix a positive integer $m_\Lambda$, independent of $\ell$, such that $\dim L_\zeta(\Lambda)_j = \dim_{\bbC(q)}L_q(\Lambda)_j$ for all $j < \frac{\ell}{m_\Lambda}$. This yields the inequality:
		\begin{equation}\label{lowerbound}
			\sum_{j<\ell/m_\Lambda}\dim_{\bbC(q)}L_q(\Lambda)_j = \sum_{j<\ell/m_\Lambda}\dim L_\zeta(\Lambda)_j \leqslant \dim L_\zeta(\Lambda).
		\end{equation}
		Let $\nu = \sum_{i=1}^nn_i\alpha_i\in Q^+$, and  define its height by $\height(\nu) = \sum_{i=1}^n n_i$. Then we compute 
		\[(2\ell-2)(\rho,\rho) = (\ell -1)\sum_{\alpha\in \Phi^+}(\rho,\alpha)\geqslant (\ell-1)\sum_{\alpha\in \Phi^+}\height(\alpha).\] 
		It follows that  $\overline{M}_\zeta(\Lambda)_j = 0$ for $j> (2\ell-2)(\rho,\rho)$, and hence $L_\zeta(\Lambda)_j = 0$ for such $j$.  For  $\underline{m}\in \{0,\ldots,\ell-1 \}^N$, define 
		$|\underline{m}| = m_1+\ldots +m_N$.
		Let $\alpha_0$ be the highest positive root in $\Phi^+$, and set $b=\lfloor\frac{\ell}{m_\Lambda\height(\alpha_0)}\rfloor$. We now consider the following subspace of $u_\zeta^-$: 
		\[(u_\zeta^-)^{\leqslant b} =\Span_\bbC\{F^{\underline{m}}\mid \underline{m}\in\{0,1,\ldots, \ell-1 \}^N, \text{ and } |\underline{m}|\leqslant b \}.  \]
		Its dimension is  given by the binomial coefficient $\binom{b+N}{N},$
		which is greater than $\frac{\ell^N}{C}$, where we set  $C=m_\Lambda \height(\alpha_0)N!$, a constant independent of $\ell$. Then 
		$\dim u_\zeta^- \leqslant C\dim (u_\zeta^-)^{\leqslant b}$. 
		Since the simple module $L_\zeta(\Lambda)$ is generated by the one-dimensional subspace $L_\zeta(\Lambda)_0$ under the action of $u_\zeta^-$, we have  
		\[ L_\zeta(\Lambda)= u_\zeta^-L_\zeta(\Lambda)_0 \supset (u_\zeta^-)^{\leqslant b}L_\zeta(\Lambda)_0.  \]
		It then follows that
		\[\dim L_\zeta(\Lambda) =\dim u_\zeta^-L_\zeta(\Lambda)_0 
		\leqslant C\dim(u_\zeta^-)^{\leqslant b}L_\zeta(\Lambda)_0. \]
		Moreover, since $(u_\zeta^-)^{\leqslant b}L_\zeta(\Lambda)_0\subset \oplus_{ j <{\ell}/{m_\Lambda}} L_\zeta(\Lambda)_j$, we deduce the upper bound:
		\begin{equation}\label{upbound}
			\dim L_\zeta(\Lambda) 
			\leqslant C\sum_{ j <{\ell}/{m_\Lambda}}\dim L_\zeta(\Lambda)_j 
			=C\sum_{ j <{\ell}/{m_\Lambda}}\dim_{\bbC(q)} L_q(\Lambda)_j.
		\end{equation}
		Finally, combining the inequalities~\eqref{lowerbound} and ~\eqref{upbound}, we obtain the desired result. 
	\end{proof}

	\section{Equivalence of categories}\label{Sec:Equiv}
	Let $\bbF$ be a field such that $\bbQ(q)\subset \bbF \subset \bfk$, and let $\catO_{q,\bbF}$ be the full subcategory of all finitely generated weight $U_\bbF$-modules on which $U_\bbF^+$ acts locally finitely. This section establishes  Soergel's structural description of the blocks of  $\catO_{q,\bbF}$.

	\subsection{Integral Weyl groups and root subsystems}\label{IntegralWeyl&subsys}
	Since the action of $W$ on general weights in $\rmT_\bbF$ can lead to inconsistencies in the signs, we adjust this using the action of $\bbZ_2^n$. In particular, we introduce a useful notation as follows. 
	\begin{definition}\label{modified+reflection}
		Let $\alpha \in \Phi$ and $\Lambda \in \rmT_\bbF$ satisfy $\Lambda(K_{2\alpha}) \in q^{\bbZ(\alpha,\alpha)}$.
		Define a \emph{modified reflection} $s_\alpha^\Lambda\in \Wch$ relative to $\Lambda$ and $\alpha$  as 
		\[ s_\alpha^\Lambda = (\sigma_{\alpha}^\Lambda, s_\alpha)\in  \bbZ_2^n\rtimes W, \]
		where $\sigma_{\alpha}^\Lambda$ is defined by
		$ \sigma_{\alpha}^\Lambda(K_\mu) = (-1)^{\inprod{\mu}{\alpha}}K_\mu$ for $\mu\in Q$ if  $\Lambda(K_\alpha)\in -q^{\bbZ(\alpha,\alpha)/2}$; 
		otherwise, $\sigma_{\alpha}^\Lambda(K_\mu) = K_\mu$ for $\mu\in Q$.
		
	\end{definition}

	Recall the subset $\calT_\Lambda$ in Definition \ref{Subset}. We have the following results.
	
	\begin{lemma}\label{ref-weight}
		Let $\Lambda\in \rmT_\bbF$ satisfy $\Lambda(K_{2\alpha}) \in q^{\bbZ(\alpha,\alpha)}$ for some $\alpha\in \Phi$. Then 
		\begin{enumerate}[{\rm (1)}]
			\item If $\Lambda^\prime = \Lambda q^{\mu}$ for $\mu\in P$, then $s_\alpha^{\Lambda^\prime} = s_\alpha^\Lambda $.
			\item If $\Lambda^\prime = \sigma\Lambda $ for $\sigma \in \bbZ_2^n$, then $s_\alpha^{\Lambda^\prime} = \sigma^\prime s_\alpha^\Lambda$, where $\sigma^\prime\in \bbZ_2^n$ is defined via $K_\mu \mapsto \sigma(\alpha)^{(\mu, \alpha^\vee)}K_\mu$.
			\item If there exists an integer $m\in \bbN_+$ such that $(m, \alpha)\in \calT_\Lambda$, then  $s_\alpha^\Lambda\cdot \Lambda = \Lambda q^{-m\alpha} $. 
		\end{enumerate}
	\end{lemma}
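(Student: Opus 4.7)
The plan is to verify each part by direct computation, using the key observation that the $\bbZ_2^n$-component $\sigma_\alpha^\Lambda$ of $s_\alpha^\Lambda$ depends on $\Lambda$ only through the coset in which $\Lambda(K_\alpha)$ lies in $\bbC^\times/q^{(\alpha,\alpha)\bbZ/2}$: the ``positive'' case $\Lambda(K_\alpha) \in q^{\bbZ(\alpha,\alpha)/2}$ gives $\sigma_\alpha^\Lambda = 1$, whereas the ``negative'' case $\Lambda(K_\alpha) \in -q^{\bbZ(\alpha,\alpha)/2}$ gives $\sigma_\alpha^\Lambda(K_\mu) = (-1)^{\inprod{\mu}{\alpha}} K_\mu$. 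Under this viewpoint, parts (1) and (2) describe how this coset transforms under the respective operations on $\Lambda$, while (3) is the \emph{raison d'\^etre} of the sign correction $\sigma_\alpha^\Lambda$: it is designed precisely so that the shifted action of the modified reflection translates $\Lambda$ by $-m\alpha$ whenever $(m,\alpha)\in \calT_\Lambda$.

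For (1), from $\Lambda'(K_\alpha) = \Lambda(K_\alpha)\, q^{(\mu,\alpha)}$ together with $\inprod{\mu}{\alpha}\in \bbZ$ (because $\mu\in P$), one has $q^{(\mu,\alpha)}\in q^{(\alpha,\alpha)\bbZ/2}$, so the coset of $\Lambda(K_\alpha)$ is preserved, $\sigma_\alpha^{\Lambda'} = \sigma_\alpha^\Lambda$, and $s_\alpha^{\Lambda'} = s_\alpha^\Lambda$ follows. For (2), the action \eqref{Z2action}-\eqref{Waction} yields $(\sigma\Lambda)(K_\alpha) = \sigma(\alpha)\Lambda(K_\alpha)$. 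If $\sigma(\alpha) = 1$ the statement is trivial with $\sigma'=1$; if $\sigma(\alpha) = -1$ the coset of $\Lambda(K_\alpha)$ flips, so $\sigma_\alpha^{\Lambda'}$ and $\sigma_\alpha^\Lambda$ differ by the prescribed element $\sigma'$. The semidirect product multiplication $(\sigma',1)(\sigma_\alpha^\Lambda, s_\alpha) = (\sigma'\cdot\sigma_\alpha^\Lambda, s_\alpha)$ then gives the identity $s_\alpha^{\Lambda'} = \sigma' s_\alpha^\Lambda$.

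For (3), I would unpack the shifted action as $(s_\alpha^\Lambda\cdot\Lambda)(K_\mu) = q^{-(\rho,\mu)}(\Lambda q^\rho)(s_\alpha^\Lambda K_\mu)$, using that $s_\alpha^\Lambda$ is an involution. The condition $(m,\alpha)\in \calT_\Lambda$ reads $\Lambda(K_\alpha)^2 = q^{m(\alpha,\alpha)-2(\rho,\alpha)}$, so $\Lambda(K_\alpha) = \epsilon\, q^{m(\alpha,\alpha)/2 - (\rho,\alpha)}$ for $\epsilon=\pm 1$, where the sign $\epsilon$ records exactly which case of the definition of $\sigma_\alpha^\Lambda$ applies. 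Expanding $K_{s_\alpha\mu} = K_\mu K_\alpha^{-\inprod{\mu}{\alpha}}$ and $(\rho, s_\alpha\mu) = (\rho,\mu) - \inprod{\mu}{\alpha}(\rho,\alpha)$, and collecting contributions, the sign from $\sigma_\alpha^\Lambda$ (when nontrivial) precisely cancels the sign $\epsilon^{-\inprod{\mu}{\alpha}}$ arising from $\Lambda(K_\alpha)^{-\inprod{\mu}{\alpha}}$, while the $q$-exponents combine to $-m(\mu,\alpha)$. This gives $(\Lambda q^{-m\alpha})(K_\mu)$, as required.

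The main obstacle is the sign bookkeeping in (3): one has to organize the calculation so that, for either sign of $\epsilon$, the correction $\sigma_\alpha^\Lambda$ compensates exactly the sign introduced by raising $\Lambda(K_\alpha)$ to the $-\inprod{\mu}{\alpha}$-th power. There is no conceptual difficulty, but this cancellation is the very reason the modified reflection was defined as in Definition~\ref{modified+reflection}, and any sloppiness with signs would break (3).
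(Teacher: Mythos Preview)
Your proposal is correct and follows essentially the same approach as the paper: a direct computation for each part, with the key point in (3) being that the sign correction $\sigma_\alpha^\Lambda$ exactly cancels the sign $\epsilon^{-\inprod{\mu}{\alpha}}$ coming from $\Lambda(K_\alpha)^{-\inprod{\mu}{\alpha}}$. The paper organizes the computation for (3) slightly differently (writing $(s_\alpha\Lambda)(K_\mu)=\Lambda(K_\mu)\Lambda(K_\alpha)^{-\inprod{\mu}{\alpha}}$ and using the identity $(\rho - s_\alpha\rho,\mu)=\inprod{\mu}{\alpha}(\rho,\alpha)$), but the content is identical; your appeal to $s_\alpha^\Lambda$ being an involution is correct (one checks $s_\alpha(\sigma_\alpha^\Lambda)=\sigma_\alpha^\Lambda$ from $\inprod{s_\alpha\mu}{\alpha}=-\inprod{\mu}{\alpha}$), though you might state this explicitly.
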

	\begin{proof}
		For (1), since $2(\mu, \alpha) = \inprod{\mu}{\alpha}(\alpha,\alpha) $ and $\inprod{\mu}{\alpha}\in \bbZ$, it follows that $\Lambda^\prime(K_{2\alpha})\in q^{\bbZ(\alpha,\alpha)}$, so $s_\alpha^{\Lambda^\prime}$ is well-defined. Note that $\sigma_{\alpha}^\Lambda=\sigma_\alpha^{\Lambda^\prime}$. Therefore,  $s_\alpha^\Lambda = s_\alpha^{\Lambda^\prime}$. For (2), note that $(\sigma\Lambda)(K_\alpha)\in -q^{\bbZ(\alpha, \alpha)/2}$ if and only if $\Lambda(K_\alpha)\in - \sigma(\alpha)q^{\bbZ(\alpha, \alpha)/2}$. Then by definition $\sigma_\alpha^{\Lambda^\prime}(K_\mu)= \sigma(\alpha)^{(\mu, \alpha^\vee)}\sigma_\alpha^{\Lambda}(K_\mu)$, and thus, it confirms Assertion (2). For (3),  it can be easily checked that $(\rho-s_\alpha\rho, \mu) = \inprod{\mu}{\alpha}(\rho, \alpha)$ for $\mu\in Q$. Then 
		{ \begin{align*}
			s_\alpha^\Lambda\cdot \Lambda(K_\mu) &=\left( q^{-\rho}s_\alpha^\Lambda(\Lambda q^\rho)\right)(K_\mu) \\
			&= q^{(s_\alpha\rho - \rho, \mu)}(s_\alpha\Lambda)(K_\mu)\sigma_{\alpha}^\Lambda(\mu) \\
			&=  q^{(s_\alpha\rho - \rho, \mu)}\Lambda(K_\alpha)^{-\inprod{\mu}{\alpha}}\Lambda(K_\mu)\sigma_{\alpha}^\Lambda(\mu) \\
			&=q^{(s_\alpha\rho - \rho, \mu)+\inprod{\mu}{\alpha}(\rho, \alpha) - \frac{m(\alpha,\alpha)}{2}\inprod{\mu}{\alpha}}\Lambda(K_\mu) \\
			&= (q^{-m\alpha}\Lambda)(K_\mu)
		\end{align*} }%
		for any $\mu\in Q$. Then Assertion (3) is proved as desired. 
	\end{proof}
	
	Recall the notions of \emph{integral Weyl group and integral root subsystem} $(W_\lambda, \Phi_\lambda)$ associated with $\lambda \in \frakh^*$, cf. \cite{Hum08}:
	\begin{align*}
		W_\lambda &= \{w\in W \;|\; w\lambda - \lambda \in Q \}, \\
		\Phi_\lambda &= \{\alpha \in \Phi \;|\; \inprod{\lambda}{\alpha} \in \bbZ \}.
	\end{align*} 
	
	For a weight $\Lambda\in \rmT_\bbF$ define a subset $\Phi_\Lambda$ of $\Phi$ as 
	\[\Phi_\Lambda = \{\alpha \in \Phi \;|\; \Lambda(K_{2\alpha}) \in q^{\bbZ(\alpha, \alpha)}  \}. \]
	A subset $\Psi\subset \Phi$ is called a \emph{root subsystem} of $\Phi$ if $-\alpha \in \Psi$ and $s_\beta \alpha \in \Psi$ if $\alpha, \beta \in \Psi$. One can check that  $\Phi_\Lambda$ is a root subsystem of $\Phi$. 
	
	Let $\Wch_\Lambda$ be the \emph{finite reflection subgroup} of $\Wch$ generated by all modified reflections $s_\alpha^\Lambda$, $\alpha\in \Phi_\Lambda$. Define $W_\Lambda$ as the image of $\Wch_\Lambda$ under the canonical projection $\Wch\rightarrow W$. Note that $\Wch_\Lambda \cap \bbZ_2^n$ is trivial, so the projection map $\Wch_\Lambda\rightarrow W_\Lambda$ is in fact an isomorphism of groups. It follows from Lemma \ref{ref-weight} that 
	
	\begin{corollary}\label{int-Weyl}
		For any weights $\Lambda, \Lambda^\prime \in \rmT_\bbF$, we have 
		\begin{enumerate}[{\rm (1)}]
			\item If $\Lambda^\prime = \Lambda q^\mu$ for some $\mu \in P$, then $\Phi_{\Lambda^\prime} = \Phi_{\Lambda}$ and $\Wch_{\Lambda}=\Wch_{\Lambda^\prime}$.
			\item If $\Lambda^\prime = \sigma\Lambda $ for some $\sigma \in \bbZ_2^n$, then $\Phi_{\Lambda^\prime} = \Phi_{\Lambda}$ and $\Wch_{\Lambda}\cong \Wch_{\Lambda^\prime}$.
		\end{enumerate} 
	\end{corollary}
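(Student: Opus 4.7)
The plan is to deduce both parts directly from Lemma \ref{ref-weight} together with the definition of $\Phi_\Lambda$, by handling the equality of root subsystems first and then transferring the conclusion to the reflection subgroups via the modified reflections $s_\alpha^\Lambda$.

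For the equality $\Phi_{\Lambda'} = \Phi_\Lambda$ in part (1), I would simply compute $\Lambda'(K_{2\alpha}) = \Lambda(K_{2\alpha})\, q^{(\mu, 2\alpha)}$ and observe that $(\mu, 2\alpha) = \inprod{\mu}{\alpha}(\alpha,\alpha) \in \bbZ(\alpha,\alpha)$ for $\mu \in P$, so the defining condition for membership in $\Phi_\Lambda$ is preserved. For part (2), the action of $\sigma \in \bbZ_2^n$ on $K_{2\alpha}$ is trivial because $\sigma(2\alpha) = \sigma(\alpha)^2 = 1$, so $\Lambda'(K_{2\alpha}) = \Lambda(K_{2\alpha})$ and again $\Phi_{\Lambda'} = \Phi_\Lambda$.

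With the root subsystems matching, the reflection subgroups follow. Part (1) is essentially immediate: Lemma \ref{ref-weight}(1) gives $s_\alpha^{\Lambda'} = s_\alpha^\Lambda$ for every $\alpha$ in the common root subsystem, so the two generating sets coincide inside $\Wch$ and $\Wch_\Lambda = \Wch_{\Lambda'}$. Part (2) is the more delicate statement: Lemma \ref{ref-weight}(2) only yields $s_\alpha^{\Lambda'} = \sigma'\, s_\alpha^\Lambda$ with some $\sigma' \in \bbZ_2^n$, which in general moves the generators off the subgroup $\Wch_\Lambda$. To obtain the isomorphism I would invoke the fact noted just after the definition of $\Wch_\Lambda$: since $\Wch_\Lambda \cap \bbZ_2^n$ is trivial, the canonical projection $\Wch \to W$ restricts to isomorphisms $\Wch_\Lambda \cong W_\Lambda$ and $\Wch_{\Lambda'} \cong W_{\Lambda'}$; both $W_\Lambda$ and $W_{\Lambda'}$ coincide with the subgroup of $W$ generated by $\{s_\alpha : \alpha \in \Phi_\Lambda = \Phi_{\Lambda'}\}$, and composing the two isomorphisms yields $\Wch_\Lambda \cong \Wch_{\Lambda'}$.

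The main point requiring any attention is the asymmetry between parts (1) and (2): the $\bbZ_2^n$-twist appearing in Lemma \ref{ref-weight}(2) prevents identifying the two reflection subgroups as subsets of $\Wch$, so the isomorphism has to be produced indirectly by passing to the ordinary Weyl group. Everything else reduces to direct bookkeeping with the definitions of $\Phi_\Lambda$ and $s_\alpha^\Lambda$.
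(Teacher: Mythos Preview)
Your proposal is correct and follows essentially the same approach as the paper's proof: the paper also declares (1) to be clear (relying on Lemma~\ref{ref-weight}(1)), and for (2) verifies $\Phi_{\Lambda'}=\Phi_\Lambda$ directly, deduces $W_{\Lambda'}=W_\Lambda$, and then obtains the isomorphism $\Wch_\Lambda\cong\Wch_{\Lambda'}$ by passing through the projection to $W$, sending $s_\alpha^\Lambda$ to $s_\alpha^{\Lambda'}$. Your version simply spells out the computations (e.g.\ $\sigma(2\alpha)=\sigma(\alpha)^2=1$) in slightly more detail.
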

	\begin{proof}
		(1) is clear. For (2), by definition, it is straightforward to verify that $\Phi_{\Lambda^\prime} = \Phi_{\Lambda}$, and hence $W_{\Lambda^\prime} = W_{\Lambda}$. Therefore, the above isomorphism follows, which sends the modified reflection $s_\alpha^\Lambda$ to $s_{\alpha}^{\Lambda^\prime}$ for $\alpha\in \Phi_\Lambda$.
	\end{proof}
	
	More generally, we can define $\Phi_\Lambda$ and $\Wch_\Lambda$ for weights $\Lambda\in \rmT_\bfk$ in the same way. Then we have the following result.
	
	\begin{proposition}\label{int-root-sys}
		For a linear weight $\Lambda = q^{\lambda}, \lambda \in \frakh_{\bbQ}^*$, we have 
		\begin{enumerate}[{\rm 1)}]
			\item $\Phi_\Lambda = \Phi_\lambda$ and $\Wch_\Lambda = W_\Lambda = W_\lambda$;
			
			\item $\Wch_\Lambda = \{w \in \Wch \;|\; w\Lambda = \Lambda q^\beta, \text{ for some } \beta \in Q \}$. 
		\end{enumerate}
	\end{proposition}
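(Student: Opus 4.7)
For Part (1), the plan is to unpack the definitions directly. Since $\Lambda = q^\lambda$ sends $K_\mu$ to $q^{(\lambda,\mu)}$, we have $\Lambda(K_{2\alpha}) = q^{2(\lambda,\alpha)}$, and this lies in $q^{\bbZ(\alpha,\alpha)}$ precisely when $\inprod{\lambda}{\alpha} = 2(\lambda,\alpha)/(\alpha,\alpha)$ is an integer, yielding $\Phi_\Lambda = \Phi_\lambda$ immediately. The next step is to show that for each $\alpha \in \Phi_\Lambda$ the modified reflection $s_\alpha^\Lambda$ reduces to the ordinary reflection $s_\alpha$: from $\inprod{\lambda}{\alpha} \in \bbZ$ one gets $(\lambda,\alpha) \in \tfrac{1}{2}\bbZ(\alpha,\alpha)$, so $\Lambda(K_\alpha) \in q^{\bbZ(\alpha,\alpha)/2}$, which is disjoint from $-q^{\bbZ(\alpha,\alpha)/2}$ because $q$ is transcendental over $\bbQ$ in $\bfk$. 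Hence Definition \ref{modified+reflection} forces $\sigma_\alpha^\Lambda = \mathrm{id}$, so $s_\alpha^\Lambda = s_\alpha$, and $\Wch_\Lambda = W_\Lambda = \langle s_\alpha : \alpha \in \Phi_\lambda \rangle = W_\lambda$.

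For the containment $\Wch_\Lambda \subset \{w \in \Wch : w\Lambda \in \Lambda q^Q\}$ in Part (2), I would argue on generators. Since every element of $\Wch_\Lambda$ lies in $W$ by Part (1), each generator $s_\alpha$ (with $\alpha \in \Phi_\lambda$) acts by $s_\alpha \Lambda = q^{s_\alpha\lambda} = \Lambda q^{-\inprod{\lambda}{\alpha}\alpha}$, and $\inprod{\lambda}{\alpha}\alpha \in Q$. To extend to arbitrary products, one uses that the $W$-action on $\rmT_\bbF$ is multiplicative and that $w q^\beta = q^{w\beta}$ for $w \in W$, so $w_1 w_2 \Lambda = \Lambda q^{\beta_1 + w_1\beta_2}$ with $\beta_1 + w_1\beta_2 \in Q$ since $Q$ is $W$-stable.

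The reverse containment carries the real content of the proposition, and it rests on a transcendence argument that I expect to be the main obstacle. Given $w = (\sigma, w_0) \in \Wch$ with $w\Lambda = \Lambda q^\beta$ for some $\beta \in Q$, evaluating both sides at $K_\mu$ yields the identity $\sigma(\mu) = q^{(w_0\lambda - \lambda - \beta,\,\mu)}$ for every $\mu \in Q$. The left side is $\pm 1$, while the right is a rational power of $q$, since $\lambda \in \frakh_\bbQ^*$ forces $(w_0\lambda - \lambda - \beta, \mu) \in \bbQ$. Because $q$ is transcendental over $\bbQ$ in $\bfk$, a rational power $q^{p/n}$ equal to $\pm 1$ satisfies $q^p = \pm 1$, which forces $p = 0$; this is the subtle point I would spell out carefully, since it simultaneously rules out any nontrivial $\bbZ_2^n$-component and pins down the Weyl-group component. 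Consequently $\sigma$ is trivial on $Q$ and $(w_0\lambda - \lambda - \beta, \mu) = 0$ for all $\mu \in Q$. Non-degeneracy of $(\cdot, \cdot)$ then gives $w_0\lambda - \lambda = \beta \in Q$, so $w = w_0 \in W_\lambda = \Wch_\Lambda$ by Part (1).
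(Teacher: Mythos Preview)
Your proof is correct and follows essentially the same route as the paper's: both compute $\Phi_\Lambda$ directly, both use transcendence of $q$ to force the $\bbZ_2^n$-component of any $\check{w}$ with $\check{w}\Lambda \in \Lambda q^Q$ to be trivial and to land in $W_\lambda$. One point to tighten: in the paper, $W_\lambda$ is \emph{defined} as $\{w\in W \mid w\lambda-\lambda\in Q\}$, so your final equality $\langle s_\alpha : \alpha\in\Phi_\lambda\rangle = W_\lambda$ in Part~(1) is a genuine (standard) theorem, not a tautology---the paper invokes \cite[Theorem 3.4]{Hum08} for exactly this step, and without it the reverse inclusion in Part~(2) does not close (you only get $w_0\in W_\lambda$, not $w_0\in\Wch_\Lambda$).
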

	\begin{proof}
		It is clear that $q^{2(\lambda,\alpha)} \in q^{\bbZ(\alpha, \alpha)}$ if and only if $2(\lambda, \alpha)\in \bbZ(\alpha, \alpha)$, i.e., $\inprod{\lambda}{\alpha} \in \bbZ$. For any $\check{w} \in \Wch$, write $\check{w} = \sigma w$ with $\sigma\in \bbZ_2^n, w\in W$. Suppose that $\check{w}\Lambda = \Lambda q^{\beta}$ for some $\beta \in Q$. The definition of $\Wch$-action on $\rmT_q$ yields that 
		$\check{w}\Lambda = \sigma q^{w\lambda} = q^{\lambda + \beta}$ which implies that $\sigma$ is the identity in $\bbZ_2^n$ and $w\lambda = \lambda + \beta$. As a result, $\Phi_\Lambda = \Phi_\lambda$ and $\Wch_\Lambda = W_\Lambda$.  By \cite[Theorem 3.4]{Hum08}, we get that $W_\lambda$ is an integral Weyl subgroup of $W$ generated by $s_\alpha$ for $\alpha \in \Phi_\lambda$. So $W_\lambda = W_\Lambda$. The second claim can be checked directly. 
	\end{proof}
	
	Define $\Phi^{\pm}_\Lambda = \Phi_\Lambda\cap\Phi^\pm$, and let $\Pi_\Lambda\subset \Phi^+_\Lambda$ be the set of simple roots of $\Phi_\Lambda$. Denote by $S_\Lambda = \{s_\alpha\mid \alpha\in \Pi_\Lambda \}$ the set of the corresponding simple reflections. Then $(W_\Lambda, S_\Lambda)$ forms a \emph{Coxeter group} associated with  $\Lambda\in \rmT_\bbF$. Similarly, $(W_\lambda, S_\lambda)$ is the corresponding Coxeter group associated with $\lambda\in \frakh^*$ or the linear weight $q^\lambda$ for $\lambda\in \frakh_{\bbQ}^*$.

	Denote $\Phi^\vee = \{\alpha^\vee = \frac{2\alpha}{(\alpha, \alpha)} \;|\; \alpha \in \Phi \}$. This is the \emph{dual system} of $\Phi$. We say a root  subsystem $\Psi \subset \Phi$ is \emph{closed} if $\alpha +\beta \in \Psi$ whenever $\alpha, \beta\in \Psi$ and $\alpha + \beta \in \Phi$. One can easily check the following result. 
	
	\begin{lemma}\label{dualsys}
		Let $\lambda \in \frakh^*$. Then $\Phi_\lambda^\vee$ is a closed root subsystem of $\Phi^\vee$. \qed 
	\end{lemma}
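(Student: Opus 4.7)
The plan is to unwind the definitions, using only the bilinear nature of the pairing $\langle \lambda, \cdot \rangle$ and the standard duality between $\Phi$ and $\Phi^\vee$. First, I note that the assignment $\alpha \mapsto \alpha^\vee$ is a bijection between $\Phi$ and $\Phi^\vee$ that intertwines the Weyl group actions, via the standard identity $(s_\beta \alpha)^\vee = s_{\beta^\vee}(\alpha^\vee)$. Since $\Phi_\lambda$ is easily checked to be a root subsystem of $\Phi$ (it is closed under negation, and $w \in W_\lambda$ preserves $\Phi_\lambda$), this bijection immediately shows that $\Phi_\lambda^\vee$ is a root subsystem of $\Phi^\vee$. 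Thus the only substantive content is the closedness condition.

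For closedness, suppose $\alpha^\vee, \beta^\vee \in \Phi_\lambda^\vee$ and that $\alpha^\vee + \beta^\vee \in \Phi^\vee$, so that $\alpha^\vee + \beta^\vee = \gamma^\vee$ for a unique $\gamma \in \Phi$. The goal is to show $\gamma \in \Phi_\lambda$. Simply applying the linear functional $\langle \lambda, -\rangle$, we get
\[
\langle \lambda, \gamma^\vee \rangle \;=\; \langle \lambda, \alpha^\vee \rangle + \langle \lambda, \beta^\vee \rangle,
\]
and both summands on the right are integers by the definition of $\Phi_\lambda$. Hence $\langle \lambda, \gamma^\vee \rangle \in \bbZ$, so $\gamma \in \Phi_\lambda$ and $\gamma^\vee \in \Phi_\lambda^\vee$, proving closedness.

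There is no genuine obstacle here; the whole point of passing to the coroot side is that $\Phi_\lambda$ itself need not be closed in $\Phi$ (the condition $\langle \lambda, \alpha^\vee \rangle \in \bbZ$ does not behave linearly under $\alpha \mapsto \alpha + \beta$ when the roots have unequal lengths), whereas the condition $\langle \lambda, -\rangle \in \bbZ$ \emph{is} linear on coroots. I would therefore present the argument precisely to highlight this asymmetry, since it is exactly the reason the diagram in the introduction has the containment $\{\text{dual-closed root subsystems}\} \subsetneq \{\text{all root subsystems}\}$ on the classical side, and motivates the subsequent Example \ref{Notclosed} on the quantum side where closedness can fail.
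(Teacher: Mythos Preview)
Your proof is correct and is exactly the direct verification the paper has in mind: the lemma is stated with a \qed and the preceding sentence ``One can easily check the following result,'' so no argument is given in the paper itself. Your write-up supplies precisely the intended check, and your closing remark about the asymmetry between $\Phi_\lambda$ and $\Phi_\lambda^\vee$ under addition is a nice gloss on why the dual side is the right place to look for closedness.
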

	\begin{remark}\label{Rmk-counterexample}
		Note that not every dual-closed root subsystem arises as $\Phi_\lambda$ for some weight $\lambda$. For instance, in the root system of type $B_3$, the subsystem consisting solely of the short roots provides a counterexample. 
	\end{remark}
	
	We end this subsection with an example where Lemma~\ref{dualsys} fails for the root subsystem $\Phi_\Lambda$ associated to a non-linear weight $\Lambda\in \rmT_\bbF$. 
	
	\begin{example}\label{Notclosed}
		Assume that $\Phi$ is the root system of type $B_2$ with simple roots $\alpha_1$ (long) and $\alpha_2$ (short). Set $\alpha = \alpha_1+\alpha_2$ and $\beta = \alpha_1+2\alpha_2$. Let $\Lambda\in \rmT_{\bbC(q)}$ satisfy that $\Lambda(K_{\alpha_1})=1$ and $ \Lambda(K_{\alpha_2})= \imath q\inv$.  
		Then a direct computation yields that 
		$\Phi_\Lambda = \{\pm \alpha_1, \pm \beta \}$, 
		which forms a root system of type $A_1\times A_1$. 
		Note that $\alpha= \alpha_1^\vee+ \beta^\vee \in \Phi^\vee$ while $\Lambda(K_{2\alpha}) = - q^{-2}\in -q^{\bbZ(\alpha, \alpha)}$, so the dual $\Phi_\Lambda^\vee$ is not closed. Hence $\Phi_\Lambda$ can not be of form $\Phi_\lambda$ for any weight $\lambda\in \frakh^*$ by Lemma \ref{dualsys}. 
		\qed
	\end{example}

	\subsection{Deformed category $\catO_R$}
	From now on, suppose that $R$ is a commutative Noetherian $U_\bbF^0$-algebra, equipped with a structure map $\tau: U_\bbF^0\rightarrow R$. We refer to such an algebra $R$ as a \emph{deformation algebra}. Recall the $R$-subalgebras $U_R^\pm$ and $U_R^0$ introduced in Subsection \ref{Specializations}. For  $\Lambda\in \rmT_\bbF$, we define the corresponding weight $\tau\Lambda\in \rmT_R$ as follows. Given an element $K = \sum_\mu K_\mu\otimes r_\mu$ in $U_R^0=U_\bbF^0\otimes_\bbF R$, where $r_\mu\in R$, we set 
	\[(\tau\Lambda)(K) = \sum_\mu\Lambda(K_\mu)\tau(K_\mu)r_\mu. \]
	This leads to the following notion of the deformed category $\catO_R$. 
	\begin{definition}
		Let $\catO_R$ denote the full subcategory of all $U_R$-modules $M$ satisfying the following conditions:
		\begin{enumerate}[{\rm (i)}]
			\item $M$ is finitely generated as a $U_R$-module.
			\item $M$ is a weight $U_R$-module with weight space decomposition $M = \oplus_{\Lambda\in \rmT_\bbF}M_{\tau\Lambda}$.
			\item For every $v\in M$, the $U_R^+$-module $U_R^+.v$ is finitely generated as an $R$-module. 
		\end{enumerate}
	\end{definition}
	
	\begin{remark}
		The category $\catO_R$ is an abelian, $R$-linear category. In the special case $R=\bbF$ with any $U_\bbF^0$-structure $\tau:U_{\bbF}^0\rightarrow \bbF$, i.e., $\tau\in \rmT_\bbF$, one has $\catO_R = \catO_{q,\bbF}$. Moreover, for each $M$ in $\catO_{R}$, its set of  weights satisfies 
		$\Omega(M) \subset \tau \rmT_\bbF$. 
	\end{remark}
	
	For any $\Lambda\in \rmT_\bbF$, the {deformed Verma module} $M_R(\tau\Lambda) $ belongs to $ \catO_R$. Note that $M_R(\tau\Lambda)$ is free as an $R$-module, and hence every module in $\catO_R$ with a \emph{Verma flag} (a finite filtration whose subquotients are isomorphic to Verma modules) is free over $R$. 
	For any morphism of deformation algebras $R \rightarrow R^\prime$, there is a \emph{base change functor }
	\[ \xymatrix{
		(\text{-}) \otimes_R R^\prime: \catO_R \ar[r] &  \catO_{R^\prime}.} \]
	In particular, one has a $U_{R^\prime}$-module isomorphism  $M_R(\tau\Lambda)\otimes_R R^\prime \cong M_{R^\prime}(\tau^\prime\Lambda)$, where $\tau^\prime: U_q^0 \rightarrow R^\prime$ is the structure map. 
	\vspace{.3cm}
	
	\emph{For convenience, we will write $\tau\Lambda$ simply as $\Lambda$ when referring to weights in $\rmT_R$, whenever no confusion arises.} With this convention, the notations $M_R(\Lambda)$ and $M_{R^\prime}(\Lambda)$ are well-defined for each $\Lambda\in \rmT_\bbF$.
	
	\vspace{.3cm}
	
	Let $R$ be a local deformation algebra with  maximal ideal $\frakm$, and let $\bbK= R/\frakm$ be its residue field. A standard deformation-theoretic argument shows that the base change functor $(\text{-}) \otimes_R\bbK$ induces a bijection between the following two isomorphism classes
	\[ 	\xymatrix{
		\big\{\text{simple objects in }  \catO_R\big\}  \ar@{<->}[r]^{1-1} &  \big\{\text{simple objects in } \catO_\bbK\big\}.}\]
	The simple modules of $\catO_\bbK$ (and hence of $\catO_R$) are parametrized by their highest weights, i.e., by elements of $\rmT_\bbF$. For $\Lambda\in \rmT_\bbF$, let $L_\bbK(\Lambda)$ be a simple module in $\catO_\bbK$ with highest weight $\Lambda$. It is a quotient of $M_\bbK(\Lambda)$.  
	
	\vspace{.3cm}
	
	The deformed category $\catO_R$ has enough projective objects for any deformation algebra $R$. We now gather several standard results from deformation theory for later use; see  \cite[Sections 3 and 4]{AJS94} for details. Let $P$ be a projective module in $\catO_R$. Then for any morphism $R\rightarrow R^\prime$ of deformation algebras, the module  $P\otimes_RR^\prime$ is projective in $\catO_{R^\prime}$, and the natural map 
	\begin{equation}\label{Proj-basechange}
		\xymatrix{
			\Hom_{\catO_R}(P, \text{-})\otimes_RR^\prime \ar[r] &  \Hom_{\catO_{R^\prime}}(P\otimes_R R^\prime, \text{-}\otimes_R R^\prime) }
	\end{equation}
	is an isomorphism of functors from $\catO_R$ to the category of $R^\prime$-modules. Suppose further that $R$ is a local deformation algebra with maximal ideal $\frakm$, and let $\bbK= R/\frakm$ denote its residue field. Then the functor $(\cdot) \otimes_R\bbK$ gives a bijection between the isomorphism classes of projective objects in  $\catO_R$ and those in $\catO_\bbK$. For each $\Lambda\in \rmT_\bbF$, let $P_R(\Lambda)$ denote the (indecomposable) \emph{projective cover} of $M_R(\Lambda)$ in $\catO_R$. Every projective object in $\catO_R$ is isomorphic to a direct sum of projective covers. 
	
	\subsection{Linkage principle and block decomposition}
	Let $R$ be a local deformation algebra with residue field $\bbK$. A \emph{partial order} $\uparrow_R$  on $\rmT_\bbF$  is generated by $\Lambda \uparrow_R \Lambda^\prime$ if $\Lambda = \Lambda^\prime$ or there exist an integer $m\in \bbN_+$ and a root $\alpha\in \Phi$ such that $(\tau\Lambda^\prime)(K_{2\alpha}) = q^{(-2\rho+m\alpha,\alpha)}$ in $\bbK$ and $\Lambda^\prime= \Lambda q^{m\alpha}$. Then we have the following argument.  
	
	\begin{lemma}\label{BGG-reciprocity}
		Let $R$ be a local deformation algebra with residue field $\bbK$. For any $\Lambda\in \rmT_\bbF$, the projective cover  $P_R(\Lambda)$ admits a Verma flag whose factors are of form $M_R(\Lambda^\prime)$ where $\Lambda^\prime\in \rmT_\bbF$ with $\Lambda^\prime  \Lambda\inv \in q^{Q_+}$. Any such $M_R(\Lambda^\prime)$ occurs exactly $[M_\bbK(\Lambda^\prime): L_\bbK(\Lambda)]$ times. Moreover,  $[M_\bbK(\Lambda^\prime): L_\bbK(\Lambda)]$ is non-zero if and only if $\Lambda \uparrow_R \Lambda^\prime$, or equivalently, $\Lambda \uparrow_\bbK \Lambda^\prime$. 
	\end{lemma}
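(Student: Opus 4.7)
The plan is to prove the lemma in three steps: construct a Verma flag on $P_R(\Lambda)$, identify the flag multiplicities via a BGG-type reciprocity, and deduce the linkage constraint from the Jantzen sum formula \eqref{Jantzenformula}. For the \emph{Verma flag}, I would use the standard truncation argument: fix a finite saturated subset $\Omega \subset \rmT_\bbF$ (under the partial order generated by $q^{Q^+}$) containing $\Omega(P_R(\Lambda))$; such an $\Omega$ exists by axiom (iii) together with finite generation. In the truncated category $\catO_R^{\Omega}$, the projective cover coincides with $P_R(\Lambda)$, and one builds the flag inductively: pick a maximal weight $\Lambda^{*}$ in the remaining support, note that $U_R^{+}$ annihilates $P_R(\Lambda)_{\Lambda^{*}}$ by maximality so that the submodule generated by a basis vector of this weight space (free over $R$ by the standard AJS base-change argument) is isomorphic to $M_R(\Lambda^{*})$, peel off one such summand, and iterate. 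The procedure terminates because $\Omega$ is finite and weight spaces have finite $R$-rank.

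For the \emph{reciprocity step}, let $n(\Lambda') = (P_R(\Lambda):M_R(\Lambda'))$ denote the flag multiplicity. Since each $M_R(\Lambda')$ is $R$-free, the base change $(-)\otimes_R \bbK$ preserves the filtration and endows $P_\bbK(\Lambda) = P_R(\Lambda)\otimes_R \bbK$ with a Verma flag having identical multiplicities, reducing the identity to BGG reciprocity over the field $\bbK$. Following the classical template, I would introduce the contragredient dual Verma $\nabla_\bbK(\Lambda')$ (via the anti-automorphism $\delta$ of Subsection \ref{Janfiltration}), whose formal character agrees with $\ch M_\bbK(\Lambda')$. Standard computations yield $\Hom(M_\bbK(\Lambda_1), \nabla_\bbK(\Lambda_2)) = \delta_{\Lambda_1,\Lambda_2}\bbK$ and $\Ext^{1}(M_\bbK(\Lambda_1), \nabla_\bbK(\Lambda_2)) = 0$. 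Applying $\Hom(P_\bbK(\Lambda), -)$ to the Verma flag gives $\dim_\bbK \Hom(P_\bbK(\Lambda), \nabla_\bbK(\Lambda')) = n(\Lambda')$, while projectivity of $P_\bbK(\Lambda)$ identifies this dimension with $[\nabla_\bbK(\Lambda') : L_\bbK(\Lambda)] = [M_\bbK(\Lambda') : L_\bbK(\Lambda)]$. In particular, $n(\Lambda') \neq 0$ forces $\Lambda \leqslant \Lambda'$, i.e., $\Lambda' \Lambda^{-1} \in q^{Q^+}$.

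The \emph{linkage step} reduces to the equivalence $[M_\bbK(\Lambda') : L_\bbK(\Lambda)] \neq 0 \Leftrightarrow \Lambda \uparrow_\bbK \Lambda'$; note that $\uparrow_R$ and $\uparrow_\bbK$ coincide by construction, since their defining relation only tests $\tau\Lambda'$ modulo $\frakm$. For the forward direction, I would induct on the height of $\mu \in Q^{+}$ with $\Lambda' = \Lambda q^{\mu}$. The base $\mu = 0$ is trivial; for $\mu \neq 0$, the factor $L_\bbK(\Lambda)$ lies strictly below the head of $M_\bbK(\Lambda')$, hence sits in the unique maximal submodule $F^{1}M_\bbK(\Lambda')$. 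The Jantzen sum formula \eqref{Jantzenformula}
\[ \sum_{i \geqslant 1} \ch F^{i}M_\bbK(\Lambda') \;=\; \sum_{(m, \alpha) \in \calT_{\Lambda'}} \ch M_\bbK(\Lambda' q^{-m\alpha}) \]
then forces $L_\bbK(\Lambda)$ to occur as a composition factor of some $M_\bbK(\Lambda' q^{-m\alpha})$ with $(m, \alpha) \in \calT_{\Lambda'}$. The condition $(m, \alpha) \in \calT_{\Lambda'}$ unpacks precisely to the elementary relation $\Lambda' q^{-m\alpha} \uparrow_\bbK \Lambda'$; since $\mu - m\alpha \in Q^{+}$ has strictly smaller height than $\mu$, induction gives $\Lambda \uparrow_\bbK \Lambda' q^{-m\alpha}$, and transitivity closes the argument. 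The converse direction follows from the same sum-formula machinery: a one-step relation $\Lambda \uparrow_\bbK \Lambda'$ places $\ch M_\bbK(\Lambda)$ inside $\sum_{i \geqslant 1} \ch F^{i}M_\bbK(\Lambda')$, so every composition factor of $M_\bbK(\Lambda)$ (including $L_\bbK(\Lambda)$) occurs in $M_\bbK(\Lambda')$; the multi-step case follows by chaining these inclusions and invoking transitivity of the composition-factor relation. The main technical hurdle lies in setting up the deformed dual Vermas with the correct $\Hom$/$\Ext^{1}$ vanishing, along with the AJS-style freeness of weight spaces of $P_R(\Lambda)$; once those foundations are in place, the overall proof is a direct transcription of the classical BGG/Jantzen template.
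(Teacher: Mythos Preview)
Your reciprocity and linkage steps are sound and essentially match what the paper does (the paper simply cites \cite[Lemma~2.16, Proposition~4.15]{AJS94} and invokes the Jantzen sum formula~\eqref{Jantzenformula}; your dual-Verma/$\Ext^1$-vanishing argument over $\bbK$ and your Jantzen induction are the standard unpackings of those references).

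The gap is in your first step. You claim that if $\Lambda^{*}$ is maximal in the support of $P_R(\Lambda)$ and $v$ is a basis vector of that weight space, then the submodule $U_R\cdot v$ is isomorphic to $M_R(\Lambda^{*})$. In general this is false: $U_R\cdot v$ is only a \emph{quotient} of $M_R(\Lambda^{*})$, namely the image of the canonical map $M_R(\Lambda^{*})\to P_R(\Lambda)$ sending the highest weight generator to $v$. (For a concrete failure, take any proper quotient of a Verma module over $\bbK$: its top weight is maximal, yet the cyclic submodule on a top vector is the whole quotient, not the Verma.) To know this map is injective you would need $P_R(\Lambda)$ to be free as a $U_R^-$-module, but that is precisely what a Verma flag gives you, so the argument is circular. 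Your appeal to ``AJS-style freeness of weight spaces'' is likewise circular: in \cite{AJS94} that freeness is deduced \emph{from} the Verma flag, not prior to it.

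The paper avoids this by a different construction (which is also the one in \cite{AJS94} and in the classical BGG argument): one realizes $P_R(\Lambda)$ as a direct summand of $E\otimes M_R(\Lambda')$ for a suitably chosen finite-dimensional $E\in\catO_{q,\bbF}$ and dominant $\Lambda'$. The tensor product has a Verma flag by the explicit filtration of Lemma~\ref{tensoringwithfdimreps}, and a direct summand of a module with a Verma flag again has one (using the $\Ext^1$-vanishing between Vermas and dual Vermas, which you already set up). Once you replace your peeling argument with this tensor-product construction, the rest of your proof goes through unchanged.
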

	\begin{proof}
		The proof proceeds in the same way as in the root of unity case (see \cite[Lemma 2.16, Proposition 4.15]{AJS94}). By Appendix \ref{App-Tranfun} the tensor product $E\otimes M_R(\Lambda^\prime)$ for any finite dimensional module $E\in \catO_{q,\bbF}$ and any weight $\Lambda^\prime\in \rmT_\bbF$ belongs to $\catO_{R}$, which has a Verma flag. The (indecomposable) projective cover $P_R(\Lambda)$ occurs as a direct summand of such a tensor product for proper $E$ and $\Lambda^\prime$. Hence $P_R(\Lambda)$ also admits a Verma flag. The non-vanishing of multiplicities can be deduced inductively from the Jantzen sum formula~\eqref{Jantzenformula}.
	\end{proof}
	We say that two weights $\Lambda$ and $\Lambda^\prime$ are \emph{linked} over $R$ if $\Lambda \uparrow_R \Lambda^\prime$ or $\Lambda^\prime \uparrow_R \Lambda$, and write  $\Lambda\sim_R\Lambda^\prime$ in this case. Let $X\subset \rmT_\bbF/\sim_R$ be any union of \emph{linkage classes}. Define $\catO_{R, X}$ to be the full subcategory of $\catO_R$ consisting of all modules $M$ whose simple subquotients have highest weights lying in linkage classes from $X$. In particular, if $X$ consists of a single linkage classes, then $\catO_{R,X}$ is called a \emph{block} of $\catO_{R}$. We have the block decomposition of $\catO_R$:
	\[\catO_R = \bigoplus_{X\in \rmT_\bbF/\sim_R}\catO_{R, X}. \]
	For any morphism of local deformation algebras $R\rightarrow R^\prime$, the base change $(\cdot)\otimes_R R^\prime$ maps $\catO_{R,X}$ into $\catO_{R^\prime, X}$. Note that the linkage classes over $R^\prime$ are finer than those over $R$.  
	
	\begin{definition}\label{Deformedrootsys}
		Let $R$ be a local deformation algebra with  residue field $\bbK$, and let $\tau:U_\bbF^0\rightarrow R$ be the structure map. Define
		\[\Phi_{R,\Lambda}=\{\alpha\in \Phi\mid (\tau\Lambda)(K_{2\alpha})\in q^{\bbZ(\alpha,\alpha)} \text{ in\;} \bbK \}.\]
	\end{definition}
	
	In analogy with $\Phi_\Lambda$, the set $\Phi_{R, \Lambda}$ forms a root subsystem of $\Phi$. For each $\alpha\in \Phi_{R, \Lambda}$, let $s_{R,\alpha}^\Lambda$ denote the modified reflection $s_\alpha^{\tau\Lambda}$ from Definition \ref{modified+reflection} with $\Lambda$ replaced by $\tau\Lambda$. We then define  $\Wch_{R,\Lambda}\subset \Wch$ to be the subgroup generated by $s_{R,\alpha}^\Lambda$, $\alpha\in \Phi_{R,\Lambda}$, and let $W_{R,\Lambda}\subset W$ be its image under the projection $\Wch \rightarrow W$. 
	If $X\in \rmT_\bbF/\sim_R$ and $\Lambda\in X$, Lemma~\ref{ref-weight}(3) shows that $X=\Wch_{R,\Lambda}\cdot\Lambda$. We shall also write $\Phi_{R,X}$, $\Wch_{R,X}$ and $W_{R,X}$ for $\Phi_{R,\Lambda}$, $\Wch_{R,\Lambda}$ and $W_{R,\Lambda}$, respectively.
	
	Since $\Wch_{R,X}$ is finite, the set $X$ admits a unique highest (resp. lowest) weight with respect to the partial order defined by 
	$	\Lambda' \leq \Lambda''$  if $ \Lambda''(\Lambda')^{-1} \in q^{Q_+}$. 
	\begin{definition}
		Let $X\in \rmT_\bbF/\sim_R$. A weight $\Lambda\in X$ is called  \emph{dominant} (resp. \emph{antidominant}) if 
		$	\Lambda^\prime \leq \Lambda$ (resp. $\Lambda \leq \Lambda^\prime$ ) for all $\Lambda^\prime \in X$.
	\end{definition}
	
	\begin{remark}
		Suppose $\Lambda = q^\lambda$ for some $\lambda\in \frakh_{\bbQ}^*$. When $R=\bbF$ with the structure map $\tau: K_{\alpha_i}^{\pm1} \mapsto 1$, the condition that $\Lambda^\prime \leq \Lambda$ (resp. $\Lambda \leq \Lambda^\prime$) for all $\Lambda^\prime \in X$ is equivalent to $\inprod{\lambda+\rho}{\alpha} \notin -\bbN_+$ (resp. $\inprod{\lambda+\rho}{\alpha} \notin \bbN_+$) for all $\alpha\in \Phi^+$; we say $\lambda\in \frakh^*$ is dominant (resp. antidominant) if $\lambda$ satisfies the latter condition. Hence, the above  terminologies agree with those used in the classical case. 
	\end{remark}
	
	Let $\Lambda, \Lambda^\prime \in \rmT_{\bbF}$ be such that 
	$\Lambda^\prime\Lambda^{-1}$ is of the form $q^\nu$ for some $\nu\in P$; 
	we call such weights \emph{compatible}. Let $R$ be a local deformation algebra. 
	By Corollary~\ref{int-Weyl}, one then has 
	\[
	\Phi_{R,\Lambda} = \Phi_{R,\Lambda^\prime} 
	\quad \text{and} \quad 
	\Wch_{R,\Lambda} = \Wch_{R,\Lambda^\prime}.
	\]
	Choose $w\in W$ such that the $U_\bbF$-module $L_q(w\Gamma)$ is finite-dimensional, where $\Gamma :=\Lambda^\prime\Lambda^{-1}$. 
	Let 
	$X, X^\prime \in \rmT_\bbF/\sim_R$ be the linkage classes containing 
	$\Lambda$ and $\Lambda^\prime$, respectively. 
	For any module $M\in \catO_{R,X}$, we define the \emph{translation functor} $T_\Lambda^{\Lambda^\prime}$ from $\catO_{R,X}$ to $\catO_{R,X^\prime}$ as 
	\[
	T_\Lambda^{\Lambda^\prime}(M) 
	= \pr_{X^\prime}\!\left(L_q(w\Gamma)\otimes M\right),
	\]
	where $\pr_{X^\prime}$ denotes the projection functor 
	from $\catO_R$ onto the block $\catO_{R,X^\prime}$.
	Essentially, these functors behave as in the classical case. 
	Their exactness, biadjointness and further properties are summarized in Appendix~\ref{App-Tranfun}.
	\subsection{Deformed endomorphism algebras}\label{Deformed-End-Alg}
	Let $X\in \rmT_\bbF/\sim_R$. Suppose that $\Lambda\in X$ is antidominant, and set 
	\[\calZ_{R,X} := \End_{\catO_{R}}(P_R(\Lambda)), \]
	the endomorphism $R$-algebra of the antidominant projective cover $P_R(\Lambda)$. The following results describe the structure of this algebra in two special cases.

	\begin{lemma}[\!\!{\cite[Section 3.1]{Fie06}}]\label{Center-Localization}
		Let $R$ be a DVR with maximal ideal $\frakm$, and let $X\in \rmT_{\bbF}/\sim_R$. Then 
		\begin{enumerate}[{\rm (1)}]
			\item If $X$ consists of a single element, then the $R$-algebra $\calZ_{R,X}$ is isomorphic to $R$.
			\item If $X$ consists of two elements, then $\calZ_{R,X}$ is isomorphic to $R\times_\bbK R$, the fibre product (pullback) of two projection maps $R \rightarrow \bbK:=R/\frakm$.
			
		\end{enumerate}
	\end{lemma}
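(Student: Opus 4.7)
The plan is to combine BGG reciprocity (Lemma~\ref{BGG-reciprocity}) with the base change identity \eqref{Proj-basechange}, which gives $\End_{\catO_R}(P_R(\Lambda))\otimes_R S \cong \End_{\catO_S}(P_R(\Lambda)\otimes_R S)$ for every $R$-algebra $S$, applied to the fraction field $Q$ of $R$ and to its residue field $\bbK$. For assertion~(1), the linkage class $X=\{\Lambda\}$ being a singleton forces the Verma flag of $P_R(\Lambda)$ to consist of the single factor $M_R(\Lambda)$, since only $\Lambda^\prime=\Lambda$ satisfies $\Lambda\uparrow_R\Lambda^\prime$ inside $X$ and the corresponding multiplicity $[M_\bbK(\Lambda):L_\bbK(\Lambda)]$ is $1$. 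Thus $P_R(\Lambda)=M_R(\Lambda)$, and any endomorphism is determined by the image of the highest weight vector $v_\Lambda$, which must be a $U_R^+$-invariant vector in the free rank-one $R$-module $M_R(\Lambda)_\Lambda=Rv_\Lambda$; this identifies $\End_{\catO_R}(M_R(\Lambda))$ with $R$.

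For~(2), write $X=\{\Lambda,\Lambda'\}$ with $\Lambda$ antidominant and $\Lambda'=\Lambda q^{m\alpha}$ for some $m\in\bbN_+$ and a root $\alpha$. By Lemma~\ref{BGG-reciprocity} one has a short exact sequence
\[0\to M_R(\Lambda')\to P_R(\Lambda)\to M_R(\Lambda)\to 0,\]
so $P_R(\Lambda)$ is $R$-free and $\calZ_{R,X}$ is $R$-torsion-free. Over $Q$, the linkage-defining equation $(\tau\Lambda')(K_{2\alpha})=q^{(-2\rho+m\alpha,\alpha)}$ only holds modulo $\frakm$, hence not in $Q$, so $\Lambda$ and $\Lambda'$ lie in distinct singleton blocks of $\catO_Q$. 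Applying case~(1) in each of these blocks together with the preserved Verma subquotient structure forces the splitting $P_R(\Lambda)\otimes_R Q \cong M_Q(\Lambda)\oplus M_Q(\Lambda')$; whence $\calZ_{R,X}\otimes_R Q\cong Q\oplus Q$ and $\calZ_{R,X}$ embeds as an $R$-subalgebra of $Q\oplus Q$ containing the diagonal $R\cdot(1,1)$.

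To pin down the image, I would specialize to $\bbK$. The Loewy structure of $P_\bbK(\Lambda)$, with head $L_\bbK(\Lambda)$, radical isomorphic to $M_\bbK(\Lambda')$ and socle $L_\bbK(\Lambda)$ (the latter arising from the Verma embedding $M_\bbK(\Lambda)\hookrightarrow M_\bbK(\Lambda')$ provided by Lemma~\ref{Verma}), gives $\End_{\catO_\bbK}(P_\bbK(\Lambda))\cong\bbK[x]/(x^2)$; Nakayama then forces $\calZ_{R,X}$ to be free of rank two over $R$. Writing it as $R\cdot(1,1)+R\cdot\eta$ with $\eta=(a,b)\in R\oplus R$, closure under multiplication makes $a,b$ roots of a common monic quadratic over $R$, and the fact that the residue is local with a single primitive idempotent forces $\bar a=\bar b$ in $\bbK$, while the requirement that the residue equal $\bbK[x]/(x^2)$ (rather than a proper subalgebra) forces $a-b$ to be a uniformizer of $\frakm$. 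A direct check then identifies the resulting subalgebra with the fibre product $R\times_\bbK R$.

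The main obstacle is producing the nondiagonal endomorphism $\eta$ of $P_R(\Lambda)$ whose specialization at $\frakm$ witnesses the local structure of $\End_{\catO_\bbK}(P_\bbK(\Lambda))$. This requires analyzing the singular vector in $M_R(\Lambda')$ at weight $\Lambda$, which becomes genuinely singular only modulo $\frakm$; the corresponding endomorphism of $P_R(\Lambda)$ factors through $P_R(\Lambda)\twoheadrightarrow M_R(\Lambda)$ followed by a lift into the $\Lambda$-weight space of $M_R(\Lambda')\subset P_R(\Lambda)$, and the uniformizing behavior of this lift is precisely what produces the fibre product relation $a-b\in\frakm\setminus\frakm^2$.
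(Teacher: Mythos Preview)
Your overall strategy for (2) --- embed $\calZ_{R,X}$ into $R\oplus R$ via the two Verma subquotients, then determine the image --- matches the paper's. The gap is in your claim that the residue structure $\End_{\catO_\bbK}(P_\bbK(\Lambda))\cong\bbK[x]/(x^2)$ forces $a-b$ to be a uniformizer. This is false: for any $k\geq 1$, the subalgebra $R\times_{R/\frakm^k}R\subset R\oplus R$ is free of rank two over $R$ with basis $(1,1)$ and $(0,\pi^k)$, and its reduction modulo $\frakm$ is $\bbK[x]/(x^2)$ regardless of $k$. So knowing the fibre at $\bbK$ and the fibre at $Q$ pins down the rank but not the congruence level. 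Your final paragraph acknowledges that one must analyze how the singular vector in $M_R(\Lambda')$ at weight $\Lambda$ degenerates, but you do not actually carry this out, and the earlier paragraph wrongly suggests the work is already done.

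The paper closes this gap with the Jantzen filtration. It constructs the lift $\pi:P_R(\Lambda)\to M_R(\Lambda')$ of the composite $P_\bbK(\Lambda)\twoheadrightarrow M_\bbK(\Lambda)\hookrightarrow M_\bbK(\Lambda')$ exactly as you describe, and observes that $\pi\circ\iota\in\End_{\catO_R}(M_R(\Lambda'))\cong R$ is some $x_\alpha\in\frakm$. The crucial input is then the sum formula~\eqref{Jantzenformula}: since the right-hand side is a single term $\ch M_\bbK(\Lambda)$, one gets $F^2M_R(\Lambda')\otimes_R\bbK=0$. If $\mathrm{val}_\frakm(x_\alpha)\geq 2$ then $\mathrm{Im}(\pi)\subset F^2M_R(\Lambda')$, forcing $\pi'=0$, a contradiction; hence $x_\alpha$ is a uniformizer and $\iota\circ\pi\mapsto(x_\alpha,0)$ generates the off-diagonal. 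This is precisely the ``uniformizing behavior of the lift'' you allude to, but it requires the Shapovalov determinant calculation, not just the Loewy structure over $\bbK$.
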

	\begin{proof}
		For (1), if $X$ consists of a single element $\Lambda$, which is both dominant and antidominant, then $P_R(\Lambda) \cong M_R(\Lambda)$, and hence $\calZ_{R,X}\cong R$. For (2), assume  $X=\{\Lambda,\Lambda^\prime \}$, where $\Lambda^\prime = s_\alpha^\Lambda\cdot \Lambda$ is dominant and $\Phi_{R,X}=\{\pm \alpha \}$. Recall that the bilinear form $\calS_R^{\Lambda^\prime}$ on $M_R(\Lambda^\prime)$ defines a decreasing sequence $\big\{F^iM_R(\Lambda^\prime) \big\}_{i\in \bbN}$ of $U_R$-submodules (see Subsection \ref{Janfiltration}). Applying $(\text{-})\otimes_R \bbK$, we obtain a corresponding filtration of $U_\bbK$-submodules. In this case, the right hand side of ~\eqref{Jantzenformula} is the character of $M_\bbK(\Lambda)$, hence 
		\[F^1M_R(\Lambda^\prime)\otimes_R \bbK \cong M_\bbK(\Lambda) \quad \text{and}\quad F^2M_R(\Lambda^\prime) \otimes_R\bbK = 0. \]
		Since $F^1M_R(\Lambda^\prime)\otimes_R \bbK$ is the unique maximal submodule of $M_\bbK(\Lambda^\prime)$, we have $[M_\bbK(\Lambda^\prime):L_\bbK(\Lambda)]=1$. Consequently, the multiplicity of $M_R(\Lambda^\prime)$ in $P_R(\Lambda)$ is one, and $P_R(\Lambda)$ admits a Verma flag with subquotients $M_R(\Lambda)$ and $M_R(\Lambda^\prime)$ (see Lemma \ref{BGG-reciprocity}). Since the dominant Verma module must appear as a submodule, we obtain the short exact sequence:
		\begin{equation*}
			\xymatrix@C=1.5em{
				0 \ar[r] &  M_R(\Lambda^\prime) \ar[r] & P_R(\Lambda) \ar[r]  & M_R(\Lambda)\ar[r] & 0.
			}
		\end{equation*}
		Let $\iota: M_R(\Lambda^\prime)\rightarrow P_R(\Lambda)$ denote the inclusion, and let $\iota^\prime: M_\bbK(\Lambda^\prime) \rightarrow P_\bbK(\Lambda)$ be its base change. Over the closed point, there is a canonical map $\pi^\prime: P_\bbK(\Lambda)\twoheadrightarrow M_\bbK(\Lambda) \hookrightarrow M_\bbK(\Lambda^\prime)$, which, by ~\eqref{Proj-basechange}, lifts to a morphism $\pi: P_R(\Lambda) \rightarrow M_R(\Lambda^\prime)$. Weight consideration gives $\pi^\prime\circ \iota^\prime = 0$, while $\pi\circ\iota $ lies in $ \End_{\catO_{R}}(M_R(\Lambda^\prime)) \cong R$ and  hence equals a multiplication by some  $x_\alpha\in \frakm$. If $\mathrm{val}_\frakm(x_\alpha) \geqslant 2$, then $\mathrm{Im }(\pi) \subset F^2M_R(\Lambda^\prime)$, so 
		\[\mathrm{Im }( \pi^\prime) = \mathrm{Im} (\pi)\otimes_R \bbK \subset F^2M_R(\Lambda^\prime)\otimes_R\bbK =0, \]
		contradicting $\pi^\prime \neq 0$. Also, $x_\alpha\neq 0$ since $F^1M_R(\Lambda^\prime)\neq 0$.  Thus $\mathrm{val}_\frakm(x_\alpha) = 1$; in particular, $\frakm$ is the principal ideal generated by $x_\alpha$. 
		Finally, for $f\in \calZ_{R,X}$, note that  $f(M_R(\Lambda^\prime)) \subset M_R(\Lambda^\prime)$ since $\Lambda^\prime > \Lambda$, hence $f$ induces scalars  $r_{\Lambda^\prime}\in \End_{\catO_{R}}(M_R(\Lambda^\prime))\cong R$ and $r_{\Lambda}\in \End_{\catO_{R}}(M_R(\Lambda))\cong R$. This yields 
		a natural map 
		\begin{equation*}
			\xymatrix@C=1.5pc{
				\calZ_{R,X} \ar[r] &  \End_{\catO_{R}}\left(M_R(\Lambda^\prime)\right)\oplus \End_{\catO_{R}}\left(M_R(\Lambda)\right) \ar[r]^-{\widesim{}} & R\oplus R.
			}
		\end{equation*}
		Since $x_\alpha\neq 0$, this map is injective, and its image is $\{(r_{\Lambda^\prime}, r_\Lambda)\mid r_{\Lambda}\equiv r_{\Lambda^\prime}\mod x_\alpha \}$; it sends $\iota\circ\pi \in \calZ_{R,X}$ to $(x_\alpha, 0)$ in $R\oplus R$. Thus $\calZ_{R,X}\cong R\times_\bbK R$. 
	\end{proof}
	
	\begin{remark}
		\begin{enumerate}[{\rm (i)}]
			\item In cases (1) and (2), $\catO_{R,X}$ is equivalent either to the category of finitely generated $R$-modules, or to the deformed principal block of $\mathfrak{sl}_2(\bbF)$ (or  $U_{q,\bbF}(\mathfrak{sl}_2)$). 
			\item The algebra $\calZ_{R,X}$ is the deformed version of Soergel’s coinvariant algebra for $W_{R,X}$ (see \cite[Endomorphismensatz]{Soe90}). As shown by Fiebig (loc. cit.), this algebra is essentially the categorical (or Bernstein) center of $\catO_{R,X}$.
		\end{enumerate}
		%
	\end{remark}
	
	Let $\mathrm{Frac}(U^0_\bbF)$ denote the fraction field of $U_\bbF^0$. Consider a localization $R$ of $U_\bbF^0$ at a prime ideal of $U_\bbF^0$. We regard the inclusion $U_\bbF^0 \stackrel{\tau}{\hookrightarrow} R \subset \mathrm{Frac}(U^0_\bbF)$ as the structure map for the local deformation algebra $R$. For a prime ideal $\frakp \subset R$ of height one, suppose that there exists $\alpha \in \Phi$ such that 
	\[\tau(K_{2\alpha})+a \in \frakp \] 
	for some $a\in \bbF$. Then $\frakp$ is a principal ideal generated by a prime factor of $\tau(K_{2\alpha})+a$ since $R$ is a Noetherian UFD (see \cite[Theorem 20.1]{Mat87}). Consequently, $R_\frakp$ is a DVR, and any linkage class in $\rmT_\bbF/\sim_{R_\frakp}$ consists of either one or two elements. As $R$ is an integrally-closed domain,  \cite[Theorem 11.5]{Mat87} implies that $R$ is the intersection of all its localizations at prime ideals of height one. Therefore, for $X\in \rmT_\bbF/\sim_{R}$ with $\Lambda \in X$ antidominant, we have 
	{\small \begin{align*}
			\calZ_{R,X} &\cong \End_{\catO_{R}}(P_R(\Lambda)) \otimes_R \bigcap_{\mathrm{ht}\frakp =1}R_\frakp \\
			&\cong \bigcap_{\mathrm{ht}\frakp =1}\End_{\catO_{R}}(P_R(\Lambda)) \otimes_RR_\frakp\\
			&\cong \bigcap_{\mathrm{ht}\frakp =1}\End_{\catO_{R_\frakp}}(P_{R_\frakp}(\Lambda)).
	\end{align*}}%
	Note that $\Lambda $ remains antidominant in its linkage class of  $\rmT_\bbF/\sim_{R_\frakp}$ for height-one prime $\frakp\subset R$. Hence, $\calZ_{R,X}$ (and thus $\calZ_{R,X}\otimes_R\bbK$) admits a combinatorial description by Lemma \ref{Center-Localization}. More precisely, we can get, under this description, that $\calZ_{R,X}$ consists of tuples $z=(z_{w})$ with entries in the direct sum $ \bigoplus_{w} R$, indexed by $w\in \Wch_{R,\Lambda}/\Stab_R(\Lambda)$ (equivalently, by weights in $X$), satisfying
	$z_{w} \equiv z_{s_{R,\alpha}^{\Lambda}w} \pmod{x_\alpha}$ for all $\alpha\in \Phi_{R,X}$. In particular, this implies that  $\calZ_{R,X}$ is a commutative $R$-algebra. Here we denote 
	\[ \Stab_R(\Lambda) = \Stab_{\Wch_{R,\Lambda}}(\Lambda):=\{w\in \Wch_{R,\Lambda}\mid w\cdot\Lambda = \Lambda\}\subset  \Wch_{R,\Lambda}. \] 
	
	Let $\Lambda, \Lambda^\prime\in \rmT_\bbF$ be two compatible weights. We say that $\Lambda, \Lambda^\prime$ lie in \emph{the closure of the same ``facet''} for $\Wch_{R,\Lambda}$ if they satisfy 
	\begin{equation}\label{WeylCha}
		q^{2(\rho, \alpha)}(\tau\Lambda)(K_{2\alpha})\in q^{\bbN(\alpha,\alpha)} \quad \text{iff}\quad q^{2(\rho, \alpha)}(\tau\Lambda^\prime)(K_{2\alpha})\in q^{\bbN(\alpha,\alpha)}\quad \text{for all } \alpha\in \Phi_{R,\Lambda}.
	\end{equation}
	Then we have the following result.
	
	\begin{proposition}\label{EndAlg-Antidom-Projs}
		Let $R$ be as above and let $X, X^\prime \in \rmT_\bbF/\sim_{R}$ with antidominant weights $\Lambda\in X$, $\Lambda^\prime \in X^\prime$. If $\Lambda, \Lambda^\prime$ are compatible and satisfy the condition \eqref{WeylCha} and  $\Stab_R(\Lambda) \subset \Stab_R(\Lambda^\prime)$, then 
		\[ 	\xymatrix{
			\calZ_{R,X} \ar[r]^-{\widesim{}} &  \calZ_{R,X^\prime}^{\bigoplus |\Stab_R(\Lambda^\prime)/\Stab_R(\Lambda)|}	}
		\]
		as $\calZ_{R,X^\prime}$-modules. Moreover, there exists a natural embedding of $R$-algebras $\calZ_{R,X^\prime} \rightarrow \calZ_{R,X}$ sending each $z\in \calZ_{R,X^\prime}$ to the  $|\Stab_R(\Lambda^\prime)/\Stab_R(\Lambda)|$-copies of $z$ in $\calZ_{R,X}$, in the sense of the above isomorphism.
	\end{proposition}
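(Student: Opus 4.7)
Set $G := \Wch_{R,\Lambda}$, $H := \Stab_R(\Lambda)$, $H' := \Stab_R(\Lambda')$, and $n := |H'/H|$. The plan is to leverage the combinatorial description of $\calZ_{R,X}$ and $\calZ_{R,X'}$ developed just prior to the statement, construct the embedding as a pullback along an orbit projection, and then establish freeness by localization at height-one primes of $R$. First I would observe that the compatibility $\Lambda'\Lambda^{-1} \in q^P$, combined with the analogue of Corollary~\ref{int-Weyl}(1) for the deformed root subsystem of Definition~\ref{Deformedrootsys}, gives $\Phi_{R,\Lambda}=\Phi_{R,\Lambda'}$ and hence $\Wch_{R,\Lambda}=\Wch_{R,\Lambda'}$; Lemma~\ref{ref-weight}(1) then produces the equality $s_{R,\alpha}^\Lambda = s_{R,\alpha}^{\Lambda'}$ for every $\alpha \in \Phi_{R,\Lambda}$. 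Hence $\Phi_{R,X} = \Phi_{R,X'}$, and both endomorphism algebras are cut out of $\bigoplus_{G/H} R$ and $\bigoplus_{G/H'} R$ by the same family of wall congruences $z_w \equiv z_{s_{R,\alpha}^\Lambda w}\pmod{x_\alpha}$.

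Second, the canonical surjection $\pi\colon G/H \twoheadrightarrow G/H'$ (well defined since $H \subseteq H'$) induces a pullback $\pi^*\colon \bigoplus_{G/H'} R \hookrightarrow \bigoplus_{G/H} R$ via $(\pi^*z')_{wH} := z'_{wH'}$. For $z' \in \calZ_{R,X'}$ and $\alpha \in \Phi_{R,X}$, the required congruence for $\pi^*z'$ reduces directly to the defining congruence of $\calZ_{R,X'}$ at $\pi(wH)$, so $\pi^*$ restricts to an injective $R$-algebra homomorphism $\iota\colon \calZ_{R,X'} \hookrightarrow \calZ_{R,X}$ sending $1$ to the all-ones tuple; this is the natural embedding asserted in the statement and endows $\calZ_{R,X}$ with its $\calZ_{R,X'}$-module structure.

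To establish the free decomposition, I would first note that over $K := \mathrm{Frac}(R)$ all wall congruences become trivial, so $\calZ_{R,X}\otimes_R K \cong K^{|X|}$ and $\calZ_{R,X'}\otimes_R K \cong K^{|X'|}$, yielding generic rank $|X|/|X'| = n$. I would then promote this to a free decomposition by localizing at each height-one prime $\frakp \subset R$: outside the primes $\frakp_\alpha$ generated by the $x_\alpha$ both algebras are trivially free, while at each $\frakp_\alpha$ Lemma~\ref{Center-Localization} identifies the localizations of $\calZ_{R,X}$ and $\calZ_{R,X'}$ as explicit fibre products, for which a direct check produces a rank-$n$ free basis whose sum equals $\iota(1)$ (the toy case $R\times_\bbK R$ over $R$, with basis $\{(1,1+x_\alpha),(0,-x_\alpha)\}$, already displays this pattern). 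Since $R$ is a local Noetherian UFD equal to the intersection of its height-one localizations, and since $\calZ_{R,X} \cong \bigcap_{\mathrm{ht}\frakp=1}\End_{\catO_{R_\frakp}}(P_{R_\frakp}(\Lambda))$ (and analogously for $X'$) by Subsection~\ref{Deformed-End-Alg}, the local bases glue to a global basis of $\calZ_{R,X}$ over $\iota(\calZ_{R,X'})$, realizing $\iota$ as the diagonal inclusion into $\calZ_{R,X'}^{\oplus n}$.

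The principal obstacle is this gluing step: because $H$ need not be normal in $H'$, the fibres of $\pi$ carry no canonical $H'/H$-torsor structure, so one cannot naively fix coset representatives and transport a local basis across fibres. The within-fibre wall congruences (those $(\alpha, wH)$ with $s_{R,\alpha}^\Lambda \in wH'w^{-1}$) encode the action of the parabolic reflection subgroup $H'$ on $H'/H$ and must be respected invariantly. The localization-and-intersection strategy circumvents the need for explicit global representatives by reducing freeness to the rank-one situation of Lemma~\ref{Center-Localization}(2), where the diagonal basis is manifest, and consistency of the local bases across different height-one primes follows from the uniform Coxeter structure of the within-fibre congruences.
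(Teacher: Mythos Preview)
Your approach is genuinely different from the paper's, and the gap you yourself flag in the gluing step is real and not resolved by what you wrote.

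The paper never touches the combinatorial description of $\calZ_{R,X}$ in this proof. Instead it argues categorically: first it shows $T_{\Lambda'}^{\Lambda}\big(P_R(\Lambda')\big)\cong P_R(\Lambda)$ by computing $\Hom$ into simples over the residue field (using adjunction and $T_\Lambda^{\Lambda'}L_\bbK(\Lambda)\cong L_\bbK(\Lambda')$), so that $T_{\Lambda'}^{\Lambda}$ induces the algebra map $\calZ_{R,X'}\to\calZ_{R,X}$. Then it applies adjunction once more together with Corollary~\ref{App-tran-proj}(1), which in the antidominant case gives $T_\Lambda^{\Lambda'}\big(P_R(\Lambda)\big)\cong P_R(\Lambda')^{\oplus r}$ with no extra summands (this follows by comparing Verma multiplicities: $P_R(\Lambda)$ is the ``big'' projective containing each Verma in $X$ once, and Lemma~\ref{Tran-Verma} turns this into each Verma in $X'$ exactly $r$ times). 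The chain
\[
\calZ_{R,X}\cong \Hom\big(T_{\Lambda'}^{\Lambda}P_R(\Lambda'),P_R(\Lambda)\big)\cong \Hom\big(P_R(\Lambda'),T_{\Lambda}^{\Lambda'}P_R(\Lambda)\big)\cong \calZ_{R,X'}^{\oplus r}
\]
is then immediate. Freeness comes for free from the decomposition of a projective object; no commutative algebra is needed.

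Your route via the tuple description gets the embedding $\iota$ correctly, and the generic rank count is fine. The problem is the passage from ``free after localizing at every height-one prime of $R$'' to ``free over $\calZ_{R,X'}$''. Local freeness at height-one primes of $R$ gives only that $\calZ_{R,X}$ is a reflexive $R$-module; it says nothing directly about projectivity or freeness over $\calZ_{R,X'}$, which is not local and not a domain. The sentence ``consistency of the local bases across different height-one primes follows from the uniform Coxeter structure'' is not an argument: your toy basis $\{(1,1+x_\alpha),(0,-x_\alpha)\}$ at $\frakp_\alpha$ has no reason to be compatible with the analogous basis at $\frakp_\beta$, and there is no cocycle vanishing you can appeal to. To make a purely combinatorial proof work you would need either an explicit global basis (which exists but requires the Coxeter combinatorics of $H'/H$ in an essential way, cf.\ Fiebig's moment-graph freeness results) or a projectivity criterion over the semilocal ring $\calZ_{R,X'}$. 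The paper's translation-functor argument sidesteps all of this.
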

	\begin{proof}
		We first note the natural isomorphism from 
		$\Hom_{\catO_\bbK}\big(T_{\Lambda^\prime}^\Lambda \big(P_\bbK(\Lambda^\prime)\big), L_\bbK(w\cdot\Lambda)\big)$  to $ \Hom_{\catO_\bbK}\big(P_\bbK(\Lambda^\prime), T^{\Lambda^\prime}_\Lambda \big(L_\bbK(w\cdot\Lambda)\big)\big)$. 
		The latter space is one-dimensional over $\bbK$ when $w$ lies in $\Stab_R(\Lambda)$, and vanishes otherwise, because $T^{\Lambda^\prime}_\Lambda \big(L_\bbK(\Lambda)\big) \cong L_\bbK(\Lambda^\prime)$, a fact that follows from the integral part $\hat{\lambda}^\prime$ of $\Lambda^\prime$ lying in the \emph{upper closure} of the facet containing the integral part $\hat{\lambda}$ of $\Lambda$ (see Lemma \ref{Tran-Verma} and \cite[Theorem 7.11]{Hum08}). Since $T_{\Lambda^\prime}^{\Lambda} \big(P_R(\Lambda^\prime)\big)$ is projective, the above computation shows that it is exactly $P_R(\Lambda)$.
		Then $T_{\Lambda^\prime}^\Lambda$ induces an $R$-algebra homomorphism
		$\calZ_{R,X^\prime} \rightarrow \calZ_{R,X}$.
		Moreover, by Corollary \ref{App-tran-proj}(1), we have 
		{\small \[
		\begin{aligned}
			\calZ_{R,X}
			&\cong \Hom_{\catO_R}\big( T_{\Lambda^{\prime}}^\Lambda( P_R(\Lambda^\prime) ), P_R(\Lambda) \big)\\
			&\cong \Hom_{\catO_R}\big( P_R(\Lambda^\prime), T_{\Lambda}^{\Lambda^\prime}( P_R(\Lambda) ) \big)\\
			&\cong \Hom_{\catO_R}\big( P_R(\Lambda^\prime), P_R(\Lambda^\prime)^{\bigoplus r} \big)\\
			&\cong \calZ_{R,X^\prime}^{\bigoplus r}
		\end{aligned}
		\]}%
		where $ r := |\Stab_R(\Lambda^\prime)/\Stab_R(\Lambda)|$.
		Under the above morphism, each $z \in \calZ_{R,X^\prime}$ maps to the $r$-copies of $z$ in $\calZ_{R,X}$. Hence the morphism is injective. 
	\end{proof}
	
	\begin{remark}
		The description of $\calZ_{R,X}$ as tuples 
		$z = (z_w) \in \bigoplus_w R$
		naturally yields an action of the quotient group 
		$\Wch_{R,\Lambda}/\Stab_R(\Lambda)$
		defined by
		\[
		x.z = (z'_w), \qquad 
		z'_w = z_{w x}, \quad 
		\forall x \in \Wch_{R,\Lambda}/\Stab_R(\Lambda).
		\]
		Under this action, the subalgebra $\calZ_{R,X'}$  in the above proposition 
		can be identified with the 
		$\Stab_R(\Lambda^\prime)/\Stab_R(\Lambda)$\nobreakdash-invariant 
		subalgebra of $\calZ_{R,X}$.
	\end{remark}

	
	
	\subsection{The combinatorics of category $\catO_{q,\bbF}$}\label{Comb-catO}
	In order to determine the categorical structure of each block $\catO_{R, X}$, we study the endomorphism algebra for a projective generator in $\catO_{R,X}$. Specifically, consider the \emph{minimal projective generator} $P_{R,X} = \oplus_{\Lambda\in X}P_R(\Lambda)$  in $\catO_{R, X}$, and set $E_{R,X}=\End_{\catO_{R}}(P_{R,X})$. Then the functor 
	\begin{equation}\label{Hom-functor}
		\xymatrix{
			\Hom_{\catO_R}(P_{R,X},\text{-}) : \catO_{R, X} \ar[r] &  \mathrm{mod}\text{-}E_{R,X} }
	\end{equation}
	is an equivalence of $R$-linear categories, where $\mathrm{mod}\text{-}E_{R,X}$ denotes the category of finitely generated right $E_{R,X}$-modules. This is a standard argument; see, for instance, \cite[Theorem II.1.3]{Bas68} for more general setting. 
	
	\vspace{.3cm}
	Fix $\calR$ to be the localization of $U_{\bbF}^0$ at the maximal ideal generated by $K_\alpha-1$ for all $\alpha\in \Phi$. We refer to such an $\calR$ as being of \emph{type $\mathbf{1}$}. The canonical embedding $\tau: U_\bbF^0 \hookrightarrow \calR$ serves as the structure map, and the composition  $U_\bbF^0\hookrightarrow \calR \rightarrow \bbF$ is given by the evaluation $K_\alpha \mapsto 1$ for all $\alpha\in \Phi$. Throughout this subsection, we regard $\bbF$ as the residue field of $\calR$. Then $\catO_{\bbF} $ coincides with $ \catO_{q,\bbF}$, and for any $\Lambda\in \rmT_\bbF$, one has
	\[ 	\Phi_{\calR,\Lambda}=\Phi_\Lambda, \quad    \Wch_{\calR,\Lambda}=\Wch_\Lambda.  \]
	
	For any prime ideal $\frakp\subset \calR$, let $\calR_\frakp$ denote the localization of $\calR$ at $\frakp$, and let $\bbK_\frakp = \calR_\frakp/\frakp \calR_\frakp$ be its residue field.  For each $\alpha\in \Phi$, denote by $\calR_\alpha$ the localization of $\calR$ at the prime ideal generated by $\tau(K_\alpha)-1$ with residue field $\bbK_\alpha$. 
	\begin{lemma}\label{localization-split}
		Let $\Lambda\in \rmT_\bbF$, and let $\frakp\subset \calR$ be any prime ideal. Then a root $\alpha\in \Phi_{\calR_\frakp,\Lambda}$ if and only if $\alpha\in \Phi_\Lambda$ and  $\tau(K_\alpha) - 1\in \frakp$.  In particular,  $\Phi_{\calR_\alpha,\Lambda}$ is either empty or equal to $\{\pm \alpha \}$, and the corresponding linkage class is either $\{\Lambda \}$ or $\{\Lambda, s_\alpha^\Lambda\cdot \Lambda \}$. 
	\end{lemma}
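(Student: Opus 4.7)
The plan is to analyze $(\tau\Lambda)(K_{2\alpha})$ modulo $\frakp$ directly, using the fact that $\tau:U_\bbF^0\hookrightarrow \calR$ is the canonical embedding with residue field $\bbF$ (sending each $K_\alpha \mapsto 1$). For the ``if'' direction, if $\Lambda(K_{2\alpha}) = q^{k(\alpha,\alpha)}$ for some $k\in\bbZ$ and $\tau(K_\alpha)-1\in\frakp$, then reducing modulo $\frakp$ immediately yields $(\tau\Lambda)(K_{2\alpha}) = \Lambda(K_{2\alpha})\cdot\overline{\tau(K_{2\alpha})} = q^{k(\alpha,\alpha)}$ in $\bbK_\frakp$, placing $\alpha$ in $\Phi_{\calR_\frakp,\Lambda}$.

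For the converse, suppose $\Lambda(K_{2\alpha})\cdot\overline{\tau(K_{2\alpha})} = q^{k(\alpha,\alpha)}$ in $\bbK_\frakp$ for some $k\in\bbZ$. Since $\Lambda(K_{2\alpha})\in\bbF^\times$ and $q^{k(\alpha,\alpha)}\in\bbF$, the element $c:=\overline{\tau(K_{2\alpha})}$ is forced to lie in $\bbF\subset\bbK_\frakp$, and via the inclusion $\calR/\frakp\hookrightarrow\bbK_\frakp$ this gives $\tau(K_{2\alpha})-c\in\frakp$. On the other hand, $\tau(K_{2\alpha})-1\in\frakm\calR$ by the defining property of $\calR$; subtracting and using $\frakp\subset\frakm\calR$ yields $c-1\in\bbF\cap\frakm\calR = 0$, so $c=1$. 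This establishes both $\alpha\in\Phi_\Lambda$ and $\tau(K_{2\alpha})-1\in\frakp$. Finally, factor $\tau(K_{2\alpha})-1 = (\tau(K_\alpha)-1)(\tau(K_\alpha)+1)$; by primality of $\frakp$, one factor lies in $\frakp$. Since $\tau(K_\alpha)+1\equiv 2\pmod{\frakm\calR}$ and $\bbF$ has characteristic zero, the second factor lies outside $\frakm\calR\supset\frakp$, forcing $\tau(K_\alpha)-1\in\frakp$.

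For the second assertion, apply the above criterion with $\frakp = \frakp_\alpha := (\tau(K_\alpha)-1)\calR$. This ideal is prime because $\alpha$ is primitive in $Q$ (a standard feature of reduced root systems), so $Q/\bbZ\alpha$ is torsion-free and $U_\bbF^0/(K_\alpha-1)\cong\bbF[Q/\bbZ\alpha]$ is a domain. A root $\beta\in\Phi$ then satisfies $\tau(K_\beta)-1\in\frakp_\alpha$ precisely when $\beta\in\bbZ\alpha$, which within $\Phi$ forces $\beta\in\{\pm\alpha\}$. Combining this with the obvious equivalence $\alpha\in\Phi_\Lambda\Leftrightarrow -\alpha\in\Phi_\Lambda$, we conclude $\Phi_{\calR_\alpha,\Lambda}$ equals either $\emptyset$ or $\{\pm\alpha\}$. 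Accordingly, $\Wch_{\calR_\alpha,\Lambda}$ is either trivial or generated by the single modified reflection $s_\alpha^\Lambda$, and by Lemma~\ref{ref-weight}(3) the corresponding linkage class $\Wch_{\calR_\alpha,\Lambda}\cdot\Lambda$ takes the form $\{\Lambda\}$ or $\{\Lambda, s_\alpha^\Lambda\cdot\Lambda\}$, respectively. The most delicate point in the argument is pinning down $c=1$: this requires coordinating the reduction modulo $\frakp$ (landing in $\bbK_\frakp$) with the canonical reduction modulo $\frakm\calR$ (landing in $\bbF$) and exploiting that an element of $\bbF$ survives both reductions only if it equals its constant value.
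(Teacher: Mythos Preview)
Your proof is correct and follows the same underlying approach as the paper: analyze $(\tau\Lambda)(K_{2\alpha})$ in $\bbK_\frakp$ and exploit that $\calR$ is of type $\mathbf{1}$ so that $\tau(K_{2\alpha})\equiv 1$ modulo $\frakm\calR$. The paper's proof is extremely terse---it records only the key observation that $(\tau\Lambda)(K_{2\alpha})\in q^{\bbZ(\alpha,\alpha)}$ in $\bbK_\frakp$ forces $\tau(K_{2\alpha})=1$ in $\bbK_\frakp$ and $\Lambda(K_{2\alpha})\in q^{\bbZ(\alpha,\alpha)}$, and leaves the rest implicit. You supply several steps the paper omits: the explicit argument that $c=1$ via $\bbF\cap\frakm\calR=0$; the passage from $\tau(K_{2\alpha})-1\in\frakp$ to $\tau(K_\alpha)-1\in\frakp$ using the factorization and primality of $\frakp$ together with $\mathrm{char}\,\bbF=0$; and the full proof of the ``in particular'' clause, including the observation that $\alpha$ is primitive in $Q$ so that $\frakp_\alpha$ is genuinely prime. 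These are not alternative ideas but rather the details needed to make the paper's sketch rigorous.
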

	\begin{proof}
		By assumption, $\calR$ is a local algebra with maximal ideal generated by $\tau(K_\mu)-1$ for all $\mu\in P$.
		By Definition \ref{Deformedrootsys}, $\alpha\in \Phi_{R_\frakp, \Lambda}$ if and only if $(\tau\Lambda)(K_{2\alpha})\in q^{\bbZ(\alpha,\alpha)}$ in $\bbK_\frakp$.
		In this case, we have $\tau(K_{2\alpha})=1$ in $\bbK_\frakp$ and $\Lambda(K_{2\alpha})\in q^{\bbZ(\alpha,\alpha)}$,
		so $\tau(K_{2\alpha})-1\in \frakp$, $\alpha\in \Phi_\Lambda$.
	\end{proof}

	
	Recall the Coxeter system $(W_\Lambda, S_\Lambda)$ associated with $\Lambda\in \rmT_\bbF$ (see Subsection \ref{IntegralWeyl&subsys}), and set  $\overline{\Lambda}=\Wch_\Lambda\cdot\Lambda$. We shall show that the block $\calO_{\bbF,\overline{\Lambda}}$ depends only on the isomorphism class of this Coxeter system together with its action on $\Lambda$.
	
	\vspace{.3cm}
	For later use, let us first review some basic notions and results in $\catO_\bbF$.
	\begin{definition}
		A weight $\Lambda$ is said to be \emph{regular}  if  $\Stab(\Lambda):=\Stab_{\Wch_{\Lambda}}(\Lambda)$ is trivial.  
	\end{definition}

	The following fact is straightforward. 
	\begin{lemma}\label{uniq2reg}
		Assume that $w$ is the  unique element in $ \Wch_\Lambda$ such that $w\cdot \Lambda$ is dominant (or antidominant). Then $\Lambda$ is regular. \qed 
	\end{lemma}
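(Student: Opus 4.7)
The plan is a straightforward orbit–stabilizer argument, once we confirm that the shifted action of $\Wch_\Lambda$ on $\rmT_\bbF$ is a genuine group action. The key observation is that for any element $\mu \in \overline{\Lambda} := \Wch_\Lambda \cdot \Lambda$, the set
\[
\{v \in \Wch_\Lambda : v \cdot \Lambda = \mu\}
\]
is precisely a left coset of $\Stab(\Lambda)$. Under the hypothesis, the coset corresponding to the (anti)dominant target $w\cdot\Lambda$ is the singleton $\{w\}$, which forces $|\Stab(\Lambda)| = 1$, i.e.\ $\Lambda$ is regular. Equivalently, arguing by contraposition: if $s \in \Stab(\Lambda)$ is nontrivial, then
\[
(ws)\cdot \Lambda \;=\; w\cdot(s\cdot\Lambda) \;=\; w\cdot\Lambda,
\]
so $ws \neq w$ is another element producing an (anti)dominant weight, contradicting uniqueness.

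The only technical point worth checking is that $w\cdot\Lambda = q^{-\rho}\,w(\Lambda q^\rho)$ defines a group action of $\Wch$ on $\rmT_\bbF$. This is not entirely cosmetic, because elements $\sigma \in \bbZ_2^n$ do not preserve the multiplication of $\rmT_\bbF$ (in fact $\sigma(\Lambda\Lambda')$ and $(\sigma\Lambda)(\sigma\Lambda')$ differ by $\sigma(\mu)$ on $K_\mu$). However, a direct computation using $(w\Lambda)(K_\mu) = \Lambda(w^{-1}K_\mu)$, together with the identity $w_1^{-1}(K_\mu) = \sigma_1(\mu)K_{v_1^{-1}\mu}$ for $w_1 = \sigma_1 v_1$, shows that the $q^{\pm\rho}$ conjugation cancels the offending signs and yields $(w_1w_2)\cdot\Lambda = w_1\cdot(w_2\cdot\Lambda)$. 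This is essentially the content already invoked (implicitly) in the earlier use of the shifted action in Lemma \ref{ref-weight}.

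Thus the anticipated obstacle — that the $\bbZ_2^n$ factor in $\Wch$ prevents $\Wch$ from acting by automorphisms of $\rmT_\bbF$ — turns out to be illusory at the level of the shifted action. Once that verification is in place, the lemma is immediate from the orbit–stabilizer identification above; no further input (such as the structure of $\Phi_\Lambda$ or deformation theory) is needed.
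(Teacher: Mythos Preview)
Your proposal is correct and matches the paper's approach: the paper omits the proof entirely (marking the lemma with \qed), treating it as the immediate orbit--stabilizer observation you spell out. One small remark: your verification that the shifted action is a group action is more laborious than necessary---since $\rmT_\bbF$ is abelian, the inner factors $q^{-\rho}$ and $q^\rho$ cancel before $w_1$ is applied, so $w_1\cdot(w_2\cdot\Lambda)=q^{-\rho}w_1\bigl(w_2(\Lambda q^\rho)\bigr)=(w_1w_2)\cdot\Lambda$ follows directly from the ordinary $\Wch$-action being a group action, without any need to track the $\bbZ_2^n$ signs.
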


	For any $\Lambda\in \rmT_\bbF$, we say that $\Lambda$ is an \emph{integral weight} if $\Phi_\Lambda = \Phi$; in this case,  $W_\Lambda=W$. It is clear that for any $\lambda\in P$, the linear weight $q^\lambda$ is integral and only these linear weights are integral. Recall the action of $\bbZ_2^n$ on $\rmT_\bbF$ (cf. \eqref{Z2action} and \eqref{Waction}). Then the semidirect product subgroup 
\[\calP :=  \bbZ_2^n \rtimes\{q^\lambda \;|\; \lambda\in P \} \]  
	of $\rmT_\bbF$ are exactly all integral weights in $\rmT_\bbF$. Moreover, the subset $ \bbZ_2^n \rtimes\{q^\lambda \;|\; \lambda\in P^+ \}$ of $\calP$, say $\calP^+$, parametrizes all isomorphism classes of finite dimensional simple $U_\bbF$-modules in terms of their highest weights \cite{Jan95}. For instance, 
	each weight $\sigma\in \bbZ_2^n \subset \calP^+$ corresponds to a one dimensional $U_\bbF$-module, on which  $K_{\alpha_i}^{\pm 1}= \sigma(\alpha_i)$, and $E_i=F_i=0$, $i\in I$. As a consequence, it induces an equivalence of categories
	\begin{align}\label{block2block}
		\xymatrix{
			\calF_\sigma: \catO_{\bbF, \overline{\sigma\Lambda}} \ar[r]^-{\widesim{}} &  \catO_{\bbF,\overline{\Lambda}}	}
	\end{align}
	via sending each module $M$ to the tensor product of $M$ and the one dimensional $U_\bbF$-module determined by $\sigma$ as above. 
	
	\vspace{.3cm}
	For simplicity, write $\sim$ for $\sim_{\calR} = \sim_\bbF$. Let $X\in \rmT_\bbF/\sim$, and choose $\Lambda\in X$ to be antidominant. We have already known that the algebra $\calZ_{\calR, X}$ defined in the previous subsection is a commutative $\calR$-algebra. Define the \emph{structure functor} 
	\[
	\xymatrix{
		\bbV_{\calR,X}: = \Hom_{\catO_R}(P_\calR(\Lambda), \text{-}): \catO_{\calR,X} \ar[r] &  \calZ_{\calR, X}\text{-}\mathrm{mod},}  \]
	which is the analogue of the classical functor considered in \cite{Soe90}.
	We omit a detailed discussion of its properties here, as they are parallel to those in the classical setting, and refer the reader to Appendix~\ref{App-Structure-functor} for further properties.
	
	\vspace{.3cm}
	Let $\calM_\calR$ be the full subcategory of $\catO_{\calR}$ consisting of modules which admits a Verma flag. In particular, all projectives in $\catO_{\calR}$ belong to $\calM_\calR$. Let $\calM_{\calR,X}$ be the corresponding block in $\calM_{\calR}$.	Let $X, X^\prime \in \rmT_\bbF/\sim$ with antidominant weights $\Lambda\in X$, $\Lambda^\prime \in X^\prime$. Assume that $\Lambda, \Lambda^\prime$ are compatible,  satisfy the condition \eqref{WeylCha} and that $\Stab(\Lambda) \subset \Stab(\Lambda^\prime)$. Then  Proposition \ref{EndAlg-Antidom-Projs} implies that $\calZ_{\calR,X^\prime}$ can be naturally embedded into $\calZ_{\calR,X}$. 
	Thus, we may define the induction functor $\Ind(\text{-}) = \Ind^{\calZ_{\calR,\Lambda}}_{\calZ_{\calR,\Lambda^\prime}}(\text{-}) =\calZ_{\calR,\Lambda}\otimes_{\calZ_{\calR,\Lambda^\prime}} (\text{-})$ and the restriction functor $\Res(\text{-})= \Res_{\calZ_{\calR,\Lambda^\prime}}^{\calZ_{\calR,\Lambda}}(\text{-})$, which is the right adjoint to $\Ind(\text{-}) $.
	
	\begin{lemma}\label{isoms-of-funtors}
		Let $X, X^\prime\in \rmT_\bbF/\sim$ with $\Lambda\in X$, $\Lambda^\prime\in X^\prime$ antidominant. Assume that $\Lambda, \Lambda^\prime$ are compatible,  satisfy the condition \eqref{WeylCha} and that $\Stab(\Lambda) \subset \Stab(\Lambda^\prime)$. Then we have the following isomorphisms of functors:
		\begin{enumerate}[{\rm (1)}]
			\item $\xymatrix{\bbV_{\calR,X^\prime}\circ T_\Lambda^{\Lambda^\prime} \cong \Res\circ\bbV_{\calR,X}: \calM_{\calR,X} \ar[r] &  \calZ_{\calR, X^\prime}\text{-}\mathrm{mod},}$
			\item $\xymatrix{\bbV_{\calR,X}\circ T_{\Lambda^\prime}^{\Lambda} \cong \Ind\circ\bbV_{\calR,X^\prime}: \calM_{\calR,X^\prime} \ar[r] &  \calZ_{\calR, X}\text{-}\mathrm{mod}.}$ 
		\end{enumerate}
	\end{lemma}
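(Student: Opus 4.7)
The plan is to reduce both isomorphisms to two ingredients already in hand: the explicit identifications of translated antidominant projectives established in the proof of Proposition~\ref{EndAlg-Antidom-Projs}, namely
\[
T^\Lambda_{\Lambda^\prime}\bigl(P_\calR(\Lambda^\prime)\bigr) \;\cong\; P_\calR(\Lambda)
\quad\text{and}\quad
T_\Lambda^{\Lambda^\prime}\bigl(P_\calR(\Lambda)\bigr) \;\cong\; P_\calR(\Lambda^\prime)^{\oplus r},
\qquad r := |\Stab(\Lambda^\prime)/\Stab(\Lambda)|,
\]
together with the biadjointness of the translation functors (Appendix~\ref{App-Tranfun}). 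The key compatibility observation is that, under the first identification, the embedding $\calZ_{\calR,X^\prime}\hookrightarrow \calZ_{\calR,X}$ supplied by Proposition~\ref{EndAlg-Antidom-Projs} is precisely the map $f\mapsto T^\Lambda_{\Lambda^\prime}(f)$ on endomorphisms.

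For part (1), for any $M\in\calM_{\calR,X}$ I apply the hom–translation adjunction to obtain natural isomorphisms
\[
\bbV_{\calR,X^\prime}\bigl(T_\Lambda^{\Lambda^\prime}(M)\bigr)
= \Hom_{\catO_\calR}\bigl(P_\calR(\Lambda^\prime), T_\Lambda^{\Lambda^\prime}(M)\bigr)
\cong \Hom_{\catO_\calR}\bigl(T^\Lambda_{\Lambda^\prime}(P_\calR(\Lambda^\prime)), M\bigr)
\cong \bbV_{\calR,X}(M).
\]
The $\calZ_{\calR,X^\prime}$-action on the left, given by precomposition with $f\in\End(P_\calR(\Lambda^\prime))$, is carried under adjunction to precomposition with $T^\Lambda_{\Lambda^\prime}(f)\in\End(P_\calR(\Lambda))$. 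By the key observation above, this is exactly the $\calZ_{\calR,X^\prime}$-action on $\bbV_{\calR,X}(M)$ restricted along the embedding $\calZ_{\calR,X^\prime}\hookrightarrow\calZ_{\calR,X}$, so the composite isomorphism realizes $\Res\circ\bbV_{\calR,X}$ as required.

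For part (2), for $M\in\calM_{\calR,X^\prime}$ the analogous adjunction yields
\[
\bbV_{\calR,X}\bigl(T^\Lambda_{\Lambda^\prime}(M)\bigr)
\cong \Hom_{\catO_\calR}\bigl(T_\Lambda^{\Lambda^\prime}(P_\calR(\Lambda)), M\bigr)
\cong \Hom_{\catO_\calR}\bigl(P_\calR(\Lambda^\prime)^{\oplus r}, M\bigr)
\cong \bbV_{\calR,X^\prime}(M)^{\oplus r}.
\]
On the other hand, Proposition~\ref{EndAlg-Antidom-Projs} gives $\calZ_{\calR,X}\cong\calZ_{\calR,X^\prime}^{\oplus r}$ as left $\calZ_{\calR,X^\prime}$-modules, hence $\Ind\circ\bbV_{\calR,X^\prime}(M)=\calZ_{\calR,X}\otimes_{\calZ_{\calR,X^\prime}}\bbV_{\calR,X^\prime}(M)\cong\bbV_{\calR,X^\prime}(M)^{\oplus r}$ as $\calZ_{\calR,X^\prime}$-modules. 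So both sides of (2) agree after restriction to $\calZ_{\calR,X^\prime}$, and the remaining task is to lift this to an isomorphism of $\calZ_{\calR,X}$-modules.

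The most delicate step, and the main obstacle, is precisely this $\calZ_{\calR,X}$-linearity in (2): on the left the action comes from functoriality of $T^\Lambda_{\Lambda^\prime}$ applied to $\End(P_\calR(\Lambda))$, while on the right it is left multiplication on the tensor factor $\calZ_{\calR,X}$. Under the identifications $\End_{\catO_\calR}(T_\Lambda^{\Lambda^\prime}(P_\calR(\Lambda)))\cong\Mat_r(\calZ_{\calR,X^\prime})$ and $\calZ_{\calR,X}\hookrightarrow\Mat_r(\calZ_{\calR,X^\prime})$ induced by $T_\Lambda^{\Lambda^\prime}$, the verification reduces to a combinatorial comparison which I would carry out using the description of $\calZ_{\calR,X}$ as tuples indexed by $\Wch_{\calR,X}/\Stab(\Lambda)$ recalled at the end of Subsection~\ref{Deformed-End-Alg}, with the $\Stab(\Lambda^\prime)/\Stab(\Lambda)$-cosets parametrising the $r$ summands of $T_\Lambda^{\Lambda^\prime}(P_\calR(\Lambda))$. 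Once this coset-level matching is made explicit, both functor isomorphisms follow, and the restriction to $\calM_{\calR,\cdot}$ ensures the relevant Hom-spaces are free over $\calR$ so that the adjunction and tensor calculations commute cleanly with base change.
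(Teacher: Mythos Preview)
Your approach is genuinely different from the paper's. The paper does \emph{not} argue directly via adjunction and the identifications of translated antidominant projectives. Instead it localizes: by Lemma~\ref{localization-split}, each linkage class $X$ over $\calR$ breaks into classes of size one or two over the height-one localizations $\calR_\frakp$, and the subgeneric case (Proposition~\ref{Subgeneric-cases}(2)) handles those directly. Since every $M\in\calM_{\calR,X}$ is free over $\calR$, both $\bbV_{\calR,X'}\circ T_\Lambda^{\Lambda'}(M)$ and $\Res\circ\bbV_{\calR,X}(M)$ are recovered as the intersection of their localizations at height-one primes, and the subgeneric identifications glue to give the global one. Part~(2) is treated the same way. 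This uniform localization argument completely sidesteps any explicit matrix or coset bookkeeping.

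Your argument for part~(1) is clean and complete: the adjunction plus the fact that the embedding $\calZ_{\calR,X'}\hookrightarrow\calZ_{\calR,X}$ was \emph{defined} in Proposition~\ref{EndAlg-Antidom-Projs} as $f\mapsto T^\Lambda_{\Lambda'}(f)$ immediately gives the $\calZ_{\calR,X'}$-linearity. However, part~(2) has a real gap that you yourself flag. You have shown both sides agree as $\calZ_{\calR,X'}$-modules, but promoting this to a $\calZ_{\calR,X}$-isomorphism requires identifying the map $\calZ_{\calR,X}\to\End(P_\calR(\Lambda')^{\oplus r})\cong\Mat_r(\calZ_{\calR,X'})$ induced by $T_\Lambda^{\Lambda'}$ with left multiplication of $\calZ_{\calR,X}$ on $\calZ_{\calR,X'}^{\oplus r}$ via the splitting of Proposition~\ref{EndAlg-Antidom-Projs}. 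Your ``coset-level matching'' sketch is plausible, but it depends on a specific choice of the isomorphism $T_\Lambda^{\Lambda'}(P_\calR(\Lambda))\cong P_\calR(\Lambda')^{\oplus r}$ compatible with the Verma-flag indexing by $\Stab(\Lambda')/\Stab(\Lambda)$, and you have not pinned this down or verified that the two $\Mat_r$-embeddings coincide. Without that, the argument for~(2) is incomplete. The paper's localization method trades this delicate global check for an easy rank-one computation done once in the appendix.
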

	\begin{proof}
		The key case in which $X$ consists of two elements and $X^\prime$ consists of one element is treated in Proposition \ref{Subgeneric-cases}(2). Thanks to Lemma \ref{localization-split}, every linkage class $X=\Wch_{R,\Lambda}\cdot\Lambda$ under $\sim$ splits into linkage classes under $\sim_{\mathcal{R}_{\alpha}}$, each containing one or two elements. Using the fact that objects in $\mathcal{M}_{\mathcal{R}}$ are  intersections of their localizations at height-one prime ideals (since they are free $\mathcal{R}$-modules; see the similar discussion in Subsection \ref{Deformed-End-Alg}), we prove the first assertion;  the second is similar. For $M\in \calM_{\calR,X}$, the $\calZ_{\calR, X^\prime}$-module $\bbV_{\calR,X^\prime}\big( T_\Lambda^{\Lambda^\prime}(M)\big)$ is also free as a $\calR$-module by Corollary \ref{prop-V}(1). For each prime $\frakp$, let $X_\frakp\subset X$ (resp. $X_\frakp^\prime\subset X^\prime$) be the linkage class with respect to $\sim_{\calR_\frakp}$ containing the weight $\Lambda$ (resp. $\Lambda^\prime$). Set $M_\frakp := M\otimes_\calR\calR_\frakp$. Then by Proposition \ref{prop-T}, Corollary \ref{prop-V}(2) and Proposition \ref{Subgeneric-cases}(2), we obtain  
		{\small	\[
			\begin{aligned}
				\bbV_{\calR,X^\prime}\big( T_\Lambda^{\Lambda^\prime}(M)\big) 
				&= \bigcap_{\mathrm{ht}\frakp =1}\bbV_{\calR,X^\prime}\big( T_\Lambda^{\Lambda^\prime}(M)\big)\otimes_{\calR}\calR_\frakp \cong \bigcap_{\mathrm{ht}\frakp =1}\bbV_{\calR_\frakp,X_\frakp^\prime}\big( T_\Lambda^{\Lambda^\prime}(M_\frakp)\big) \\
				&\cong \bigcap_{\mathrm{ht}\frakp =1} \Res\big(\bbV_{\calR_\frakp,X_\frakp}(M_\frakp)\big)\cong \Res\big(\bbV_{\calR,X}(M)\big).
			\end{aligned}
			\]}%
		This identification is functorial and thus yields the isomorphism in (1).
	\end{proof}
	Let $X^\prime \in \rmT_\bbF/\sim$. Take any  $\Lambda^\prime\in X^\prime$. From our previous discussion, we already know the algebraic structure of $\calZ_{\calR, X^\prime}$ (Subsection \ref{Deformed-End-Alg}). We now aim to describe the structure of the $\calZ_{\calR, X^\prime}$-module $\bbV P_\calR(\Lambda^\prime)$. 
	If $\Lambda^\prime$ is dominant, then we have an isomorphism $P_\calR(\Lambda^\prime)\cong M_\calR(\Lambda^\prime)$, and consequently $\bbV P_\calR(\Lambda^\prime)\cong \calZ_{\calR,X^\prime}/\calI_{\Lambda^\prime}$ by Lemma \ref{propertiesforfunctorV}(2). In general, for an arbitrary weight $\Lambda^\prime\in X^\prime$, the $\calZ_{\calR, X^\prime}$-module $\bbV P_\calR(\Lambda^\prime)$ can also be described inductively, based on the previous results and the discussion in Appendix \ref{Appendix}. This inductive description shows that the module structure depends only on the associated Coxeter group and its action on $\Lambda^\prime$.
	We outline the construction as follows. 
	
		(i) Let $\Lambda^\prime \in X^\prime$ be dominant. Choose a regular dominant weight $\Lambda\in \rmT_\bbF$ (with respect to $\sim$) that is compatible with $\Lambda^\prime$ and satisfies the condition \eqref{WeylCha}, and let $X \in \rmT_\bbF/\sim$ be the corresponding linkage class. Then  $T_\Lambda^{\Lambda^\prime}:\catO_{\calR,X}\rightarrow \catO_{\calR,X^\prime}$
		induces an embedding of $\calR$-algebras
		\[ \xymatrix{\calZ_{\calR,X^\prime}\ar[r] & \calZ_{\calR,X}} \]
		and $\calZ_{\calR,X^\prime}$ can be realized as the invariant subalgebra $\calZ_{\calR,X}^{\Stab(\Lambda^\prime)}$  (Proposition \ref{EndAlg-Antidom-Projs}). 
		
		(ii) Let $w\in \Wch_{\Lambda^\prime}$, and denote by $\bar{w}\in \Wch_{\Lambda^\prime}/\Stab(\Lambda^\prime)$ its image. Then the module $T_\Lambda^{\Lambda^\prime}P_\calR(w\cdot\Lambda)$ decomposes into $|\Stab(\Lambda^\prime)|$ copies of $P_\calR(\bar{w}\cdot\Lambda^\prime)$,  possibly together with additional summands $P_\calR(\bar{w}'\cdot\Lambda^\prime)$ with $\bar{w}'\cdot\Lambda^\prime>\bar{w}\cdot\Lambda^\prime$. 
		Applying the functor $\bbV$ yields that 
		\begin{equation*}
			\bbV_{\calR,X^\prime}\circ T_\Lambda^{\Lambda^\prime}\big(P_\calR(w\cdot\Lambda)\big) \cong \Res\circ\bbV_{\calR,X}\big(P_\calR(w\cdot\Lambda)\big). 
		\end{equation*}
		Accordingly, $	\bbV\big(T_\Lambda^{\Lambda^\prime}P_\calR(w\cdot\Lambda)\big) $ decomposes into $|\Stab(\Lambda^\prime)|$ copies of $\bbV P_\calR(\bar{w}\cdot\Lambda^\prime)$, and possibly additional terms $\bbV P_\calR(\bar{w}'\cdot\Lambda^\prime)$ with $\bar{w}'\cdot\Lambda^\prime>\bar{w}\cdot\Lambda^\prime$ (Lemma \ref{isoms-of-funtors} and Corollary \ref{App-tran-proj}).  Therefore, the above isomorphism provides an inductive description of $\bbV P_\calR(\bar{w}\cdot\Lambda^\prime)$ assuming that $\bbV P_\calR(w\cdot\Lambda)$ is already known  for all $w'\in \Wch_\Lambda(=\Wch_{\Lambda^\prime})$ with $w'\cdot\Lambda > w\cdot\Lambda$. 
		
		(iii) Using \eqref{block2block} we may assume without loss of generality that $\Wch_\Lambda = W_\Lambda$. Choose a reduced expression $w = s_{\gamma_1}\cdots s_{\gamma_t}$ with respect to the Coxeter group $(W_\Lambda, S_\Lambda)$, where $s_{\gamma_i}\in S_\Lambda$. We then  construct $\bbV P_\calR(w\cdot\Lambda)$ inductively on the length of $w$. Denote by $\Theta_i$ the translation through the ``$s_{\gamma_i}$-wall'' (see Appendix \ref{App-Tranfun}). Consider the $\calZ_{\calR,X}$-module   $\Theta_{t}\cdots\Theta_{1}P_\calR(\Lambda)$. The projective $P_\calR(w\cdot\Lambda)$ is uniquely determined as the indecomposable summand not isomorphic to any $P_\calR(w'\cdot\Lambda)$ with $l(w')<l(w)$. Similarly,  $\bbV P_\calR(w\cdot\Lambda)$ can be realized as the direct summand of 
		\[\calZ\otimes_{\calZ_t}\calZ\cdots\otimes_{\calZ_2}\calZ\otimes_{\calZ_1}\calZ/\calI_\Lambda, \]
		which is not isomorphic to $\bbV P_\calR(w'\cdot\Lambda)$ for any $w'\in W_\Lambda$ of smaller length than $w$ (cf. Lemma \ref{propertiesforfunctorV}). Here $\calZ := \calZ_{\calR,X}$ and $\calZ_i := \calZ_{\calR,X}^{\langle s_{\gamma_i}\rangle}$. Thus, we obtain an inductive description of $\bbV P_\calR(w\cdot\Lambda)$. 
	
	\vspace{.3cm}
	In addition, restricting the functor $\bbV$ to $\calM_{\calR,X}\subset \catO_{\calR,X}$, we obtain the result below.
	The categorical structure of $\catO_{\calR,X}$ is determined by its subcategory $\calM_{\calR,X}$, which allows us to further understand the structure of $\catO_{\calR,X}$.
	\begin{proposition}\label{structuretheorem}
		The functor 
		$\bbV: \calM_{\calR,X} \rightarrow \calZ_{\calR,X}\text{-}\mathrm{mod}$ is fully faithful, i.e., 
		\begin{align*}
			\xymatrixcolsep{3pc}\xymatrix{
				\Hom_{\calM_{\calR,X}}(M, N)  \ar[r]^-{\widesim{}} &  \Hom_{\calZ_{\calR,X}\text{-}\mathrm{mod}}(\bbV M, \bbV N) }
		\end{align*}
		is an isomorphism for any $M, N \in \calM_{\calR,X}$. 
	\end{proposition}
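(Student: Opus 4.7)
The plan is to combine two standard reductions from Soergel's classical \emph{Struktursatz}: resolve-and-reduce to projectives, then localize at height-one primes of $\calR$ to reach the subgeneric situation where everything is explicit.

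I would first note that $\bbV$ is exact on $\calM_{\calR,X}$ (thanks to the projectivity of $P_\calR(\Lambda)$ in $\catO_{\calR,X}$ and the $\calR$-freeness of all objects of $\calM_{\calR,X}$), and then, given $M, N \in \calM_{\calR,X}$, choose a presentation $P_1 \to P_0 \to M \to 0$ by projectives in $\calM_{\calR,X}$. A five-lemma diagram chase using the naturality of the comparison map $\Hom(M,N) \to \Hom_{\calZ_{\calR,X}}(\bbV M, \bbV N)$ reduces the statement to the case where $M$ is an indecomposable projective $P_\calR(w\cdot\Lambda)$. A symmetric step resolving $N$ further reduces us to the case where both arguments are projective.

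For the projective case, since $P_\calR(w\cdot\Lambda)$ and $\bbV P_\calR(w\cdot\Lambda)$ are both free over $\calR$, the two Hom-modules are $\calR$-torsion-free, and hence equal to the intersection of their localizations over all height-one primes $\frakp \subset \calR$. At each such $\frakp$, by Lemma \ref{localization-split} the $\sim_{\calR_\frakp}$-linkage class of $\Lambda$ contains at most two elements, placing us in the subgeneric situation: by Lemma \ref{Center-Localization}, the algebra $\calZ_{\calR_\frakp, X_\frakp}$ is either $\calR_\frakp$ or the fibre product $\calR_\frakp \times_{\bbK_\frakp} \calR_\frakp$, and the corresponding block $\catO_{\calR_\frakp, X_\frakp}$ is equivalent to a deformed principal block of $U_q(\mathfrak{sl}_2)$-type. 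In this essentially rank-one setting, full faithfulness of $\bbV$ on the two projectives $P_{\calR_\frakp}(\Lambda)$ and $P_{\calR_\frakp}(s_\alpha^\Lambda \cdot \Lambda)$ can be checked by a direct calculation, using the explicit description of the Verma flag and the morphisms $\iota,\pi$ constructed in the proof of Lemma \ref{Center-Localization}.

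The main obstacle is precisely this subgeneric calculation. One needs to compute all four Hom-spaces $\Hom_{\catO_{\calR_\frakp}}\bigl(P_{\calR_\frakp}(\Lambda_i), P_{\calR_\frakp}(\Lambda_j)\bigr)$ for $\Lambda_i, \Lambda_j \in \{\Lambda,\ s_\alpha^\Lambda\cdot\Lambda\}$, identify each via $\bbV$ as a $\calZ_{\calR_\frakp, X_\frakp}$-module, and match it with the corresponding Hom-space in $\calZ_{\calR_\frakp, X_\frakp}\text{-}\mathrm{mod}$. The delicate point is the off-diagonal pair: the morphism $\pi\circ\iota$ must be shown to correspond, under $\bbV$, to the element $(x_\alpha, 0) \in \calR_\frakp \times_{\bbK_\frakp} \calR_\frakp$ identified in Lemma \ref{Center-Localization}, so that compositions and scalar relations on the two sides align. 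Once this is verified at every height-one localization, intersecting recovers full faithfulness on projectives in $\calM_{\calR,X}$ globally, and the initial projective-resolution reduction then delivers the statement for all of $\calM_{\calR,X}$.
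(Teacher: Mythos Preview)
Your localization strategy is exactly the paper's: reduce to height-one primes of $\calR$, use Lemma~\ref{localization-split} to land in the generic/subgeneric situation, and invoke the explicit two-element-block description. The paper's proof is a one-liner pointing to the same argument as Lemma~\ref{isoms-of-funtors} together with Proposition~\ref{Subgeneric-cases}(1), which already states full faithfulness of $\bbV$ on \emph{all} of $\calM_{R,X}$ in the subgeneric case (the paper outsources that computation to \cite{Fie06} rather than redoing it).

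There is, however, a genuine gap in your preliminary ``reduce to projectives'' step. The reduction in the first variable is fine: with $P_1\to P_0\to M\to 0$ and $\Hom(-,N)$ left exact on both sides, the five lemma gives the iso for $M$ once it is known for $P_0,P_1$. But the ``symmetric step resolving $N$'' does not go through. For $Q_1\to Q_0\to N\to 0$ and $M=P$ projective, the top row $\Hom(P,Q_1)\to\Hom(P,Q_0)\to\Hom(P,N)\to 0$ is right exact, whereas on the bottom you only have
\[
\Hom_{\calZ_{\calR,X}}(\bbV P,\bbV Q_1)\to\Hom_{\calZ_{\calR,X}}(\bbV P,\bbV Q_0)\to\Hom_{\calZ_{\calR,X}}(\bbV P,\bbV N),
\]
and nothing guarantees surjectivity at the right end unless you know $\bbV P$ is projective as a $\calZ_{\calR,X}$-module --- which is not established and is in fact close to what you are trying to prove. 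So the five-lemma argument cannot close for the second variable.

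The fix is simply to drop the projective reduction altogether, as the paper does. For $M,N\in\calM_{\calR,X}$ both $\Hom_{\catO_\calR}(M,N)$ and $\Hom_{\calZ_{\calR,X}}(\bbV M,\bbV N)$ are torsion-free $\calR$-modules (they sit inside $\calR$-Homs between free modules), hence equal the intersection of their localizations at height-one primes; the comparison map commutes with these localizations by flatness and Corollary~\ref{prop-V}(2); and at each height-one prime Proposition~\ref{Subgeneric-cases}(1) gives the isomorphism for arbitrary objects with Verma flags, not just projectives. Your subgeneric computation of the four projective Hom-spaces is thus unnecessary in this form --- though it is essentially what underlies the Fiebig reference cited in Proposition~\ref{Subgeneric-cases}.
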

	\begin{proof}
		The proof follows the same strategy as the proof of Lemma \ref{isoms-of-funtors}. In the localization setting, Proposition \ref{Subgeneric-cases} provides the required identification, from which the statement follows.
	\end{proof}
	
	Let $U_q^\prime = U_q(\frakg^\prime)$ be the quantum group with respect to another finite dimensional  semisimple Lie algebra $\frakg^\prime$. In what follows, we temporarily use primed symbols $U_\bbF^\prime$, $\rmT_\bbF^\prime$, ${\Wch}^\prime$, $\catO_{q,\bbF}^\prime$, etc., to denote the counterparts of $U_\bbF$,  $\rmT_\bbF, \Wch, \catO_{q,\bbF}$ corresponding to the quantum group $U_q^\prime$.

	We now state the main result of this section. 
	
	\begin{theorem}\label{quan2quan}
		Let $\Lambda\in \rmT_\bbF$ and $\Lambda^\prime\in \rmT_\bbF^\prime$ be weights that are both either dominant or antidominant. Assume that there exists an isomorphism of Coxeter groups 
		\[(W_\Lambda, S_\Lambda)  \cong (W_{\Lambda^\prime}^\prime, S^\prime_{\Lambda^\prime}) \]
		which induces an isomorphism of groups $\Wch_\Lambda \rightarrow  \Wch_{\Lambda^\prime}^\prime$, $x \mapsto x^\prime$ and a bijection of linkage classes
		\[
		\xymatrix@C=1.5em{
			\overline{\Lambda} \ar[r]& \overline{\Lambda^\prime}, \hspace{-.6cm}& x\cdot\Lambda \ar@{|->}[r] & x^\prime\cdot\Lambda^\prime.
		}
		\]
		Then the corresponding blocks $\catO_{\bbF,\overline{\Lambda}}$  and $\catO_{\bbF,\overline{\Lambda^\prime}}^\prime$ are equivalent as categories. Under this equivalence, the Verma $U_\bbF$-module   $M_q(x\cdot\Lambda)$ and its simple quotient $L_q(x\cdot\Lambda)$ correspond to the Verma $U_\bbF^\prime$-module $M_q^\prime(x^\prime\cdot\Lambda^\prime)$ and its simple quotient $L_q^\prime(x^\prime\cdot\Lambda^\prime)$, respectively.
	\end{theorem}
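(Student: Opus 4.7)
The plan is to reduce the statement to a Morita-type comparison of endomorphism algebras of projective generators by passing through the deformed category $\catO_{\calR,\overline{\Lambda}}$ over the type-$\mathbf{1}$ localization $\calR$, and then to exploit the combinatorial description of the endomorphism algebra $\calZ_{\calR,\overline{\Lambda}}$ established in Subsection \ref{Deformed-End-Alg}. Recall that $\calZ_{\calR,\overline{\Lambda}}$ is explicitly given as the subalgebra of $\bigoplus_{w} \calR$ indexed by $w\in \Wch_\Lambda/\Stab(\Lambda)$, cut out by the congruences $z_w \equiv z_{s_{\calR,\alpha}^\Lambda w} \pmod{x_\alpha}$ for $\alpha\in \Phi_{\calR,\overline{\Lambda}}$. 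Under the hypothesis, the indexing set, the stabilizer, the reflections, and the congruence data all transfer bijectively; hence there is a canonical $\calR$-algebra isomorphism $\calZ_{\calR,\overline{\Lambda}} \xrightarrow{\sim} \calZ'_{\calR,\overline{\Lambda^\prime}}$.

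The first concrete step is to reduce to the antidominant case (the dominant case will follow by applying the standard BGG-type duality on $\catO_{q,\bbF}$, which preserves linkage classes and exchanges dominant with antidominant weights). Working with antidominant $\Lambda, \Lambda^\prime$, I would next transport the inductive description of $\bbV P_\calR(x\cdot\Lambda)$ recalled in Subsection \ref{Comb-catO}: it is built from the wall-crossing translations $\Theta_i$ attached to simple reflections $s_{\gamma_i}\in S_\Lambda$ and from the $\langle s_{\gamma_i}\rangle$-invariant subalgebras of $\calZ_{\calR,\overline{\Lambda}}$. Since the Coxeter isomorphism matches simple reflections and is compatible with the algebra isomorphism above, the module $\bbV P_\calR(x\cdot\Lambda)$ corresponds to $\bbV' P'_\calR(x^\prime\cdot\Lambda^\prime)$ under the identification of the two endomorphism algebras. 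Combining this with Proposition \ref{structuretheorem} (full faithfulness of $\bbV$ on $\calM_{\calR,\overline{\Lambda}}$) yields an $\calR$-linear equivalence $\calM_{\calR,\overline{\Lambda}} \simeq \calM'_{\calR,\overline{\Lambda^\prime}}$ sending $P_\calR(x\cdot\Lambda)\mapsto P'_\calR(x^\prime\cdot\Lambda^\prime)$ and $M_\calR(x\cdot\Lambda)\mapsto M'_\calR(x^\prime\cdot\Lambda^\prime)$, the latter being pinned down by the computation $\bbV M_\calR(x\cdot\Lambda) \cong \calZ_{\calR,\overline{\Lambda}}/\calI_{x\cdot\Lambda}$.

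Finally, I would take the minimal projective generator $P_{\calR,\overline{\Lambda}} = \bigoplus_{x} P_\calR(x\cdot\Lambda)$, which matches $P'_{\calR,\overline{\Lambda^\prime}}$ under the equivalence on $\calM$, obtain an $\calR$-algebra isomorphism $E_{\calR,\overline{\Lambda}} \cong E'_{\calR,\overline{\Lambda^\prime}}$, specialize along $\calR \to \bbF$ to get $E_{\bbF,\overline{\Lambda}} \cong E'_{\bbF,\overline{\Lambda^\prime}}$, and invoke the Morita equivalence \eqref{Hom-functor} to conclude $\catO_{\bbF,\overline{\Lambda}} \simeq \catO'_{\bbF,\overline{\Lambda^\prime}}$. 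The Verma and simple matchings transfer automatically since the equivalence is induced by an isomorphism of projective generators indexed by $x\mapsto x^\prime$. The main obstacle I anticipate is verifying that the abstract Morita equivalence preserves the highest-weight labeling in the prescribed way $x\cdot\Lambda\mapsto x^\prime\cdot\Lambda^\prime$; this will require tracking the index $\bar{x}\in \Wch_\Lambda/\Stab(\Lambda)$ through each step of the inductive construction of $\bbV P_\calR(x\cdot\Lambda)$, and checking compatibility of the dominant/antidominant reduction with the bijection of linkage classes. A secondary technical point is the specialization step $\calR \to \bbF$: since the Vermas are free over $\calR$ and projectives admit Verma flags (Lemma \ref{BGG-reciprocity}), base change commutes with $\End$ on $\calM$, so no information is lost.
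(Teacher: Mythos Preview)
Your proposal is correct and follows essentially the same route as the paper: pass to the deformed category over the type-$\mathbf{1}$ localization $\calR$, invoke the Morita equivalence \eqref{Hom-functor}, use Proposition~\ref{structuretheorem} to identify $\Hom_{\catO_\calR}(P_\calR(x\cdot\Lambda),P_\calR(y\cdot\Lambda))$ with $\Hom_{\calZ_{\calR,\overline\Lambda}}(\bbV P_\calR(x\cdot\Lambda),\bbV P_\calR(y\cdot\Lambda))$, and then appeal to the inductive combinatorial description of $\bbV P_\calR(x\cdot\Lambda)$ from Subsection~\ref{Comb-catO}, which depends only on $(W_\Lambda,S_\Lambda)$ and the stabilizer. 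The only cosmetic differences are that the paper first normalizes via \eqref{block2block} to arrange $\Wch_\Lambda=W_\Lambda$ rather than invoking a BGG-type duality to reduce dominant to antidominant (the inductive description in the paper already handles both cases uniformly, so your duality step is unnecessary though harmless), and the paper leaves the Verma/simple matching implicit whereas you spell it out.
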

	\begin{proof}
		By virtue of the equivalence \eqref{block2block}, we may fix the situation so that  $\Wch_{\Lambda}=W_\Lambda $ and  $\Wch_{\Lambda^\prime}^\prime=W_{\Lambda^\prime}^\prime$.
		To analyze the block  $\catO_{\bbF,\overline{\Lambda}} $ (or  $\catO_{\calR,\overline{\Lambda}} $), one only needs to understand the endomorphism algebra  $E_{\calR,\overline{\Lambda}}$.
		Indeed, the equivalence \eqref{Hom-functor}, together with the base change isomorphism  $P_\bbF(w\cdot\Lambda) \cong P_\calR(w\cdot\Lambda)\otimes_\calR \bbF $ and \eqref{Proj-basechange}, reduces the problem to describing the spaces $
		\Hom_{\catO_\calR}(P_\calR(x\cdot\Lambda), P_\calR(y\cdot\Lambda))$, for $x, y \in W_\Lambda$.
		Proposition~\ref{structuretheorem} gives a functorial identification involving  $\bbV$, so it remains to determine $\bbV P_\calR(x\cdot\Lambda)$. 
		The combinatorial description obtained earlier ensures that the structure of each $\bbV P_\calR(x\cdot\Lambda)$, and hence  of $ E_{\calR,\overline{\Lambda}}$, is governed entirely by the Coxeter system $(W_\Lambda, S_\Lambda)$ and the stabilizer of $\Lambda$. This completes the proof.
	\end{proof}

	The above theorem is a quantum analogue of Soergel’s structure description \cite{Soe90}.
	While we work with $\catO_{q,\bbF}$, its ``non-integral'' blocks can be described in a similar combinatorial manner by suitable extensions of integral Weyl subgroups and root subsystems.
	As indicated above, this generalization does not seem to introduce any essential difference.
	However, Example \ref{Notclosed} shows that integral root subsystems in the quantum setting exhibit certain deviations from the classical case, which will be discussed in the next section.

	\section{Minimal growth}\label{minGK}
	In this section, we determine the GK-dimension for each simple module in $\catO_{q,\bbF}$ via Lusztig's $\afun$-function. Furthermore, we compute the (nonzero)  minimal GK-dimension, highlighting a distinction from the classical case.
	
	\subsection{Lusztig's $\afun$-function} Let $S=\{s_\alpha \;|\; \alpha\in \Pi \}\subset W$. In \cite{Lus84}, Lusztig related the GK-dimension of simple objects in the \emph{principal block} $\catO_{\bar{0}}(\frakg)$ to a function $\afun: W \rightarrow \bbN$, defined for the Coxeter system $(W, S)$. We briefly recall the relevant definitions below. 
	
	Let $v$ be an indeterminate. A \emph{Hecke algebra} $\calH$ is a $\bbZ[v,v\inv]$-algebra generated by elements $T_w$, $w\in W$ subject to the following relations:
	\begin{align}
		&(T_s + v\inv)(T_s - v) = 0 \quad \text{for any } s \in S, \\
		& T_wT_{w'} = T_{ww'} \quad \text{if } l(ww') = l(w)+l(w'),
	\end{align}
	where $l$ is the \emph{length function} of $(W,S)$. 
	Let $\{C_w, w\in W \}$ be the \emph{Kazhdan-Lusztig basis} of $\calH$, which is characterized as the unique element $C_w\in \calH$ such that 
	\[\overline{C_w} = C_w, \quad C_w \equiv T_w (\mathrm{mod }\;  \calH_{<0}) \]
	where $\bar{{()}}: \calH \rightarrow \calH$ is the \emph{bar involution} such that $\bar{v} = v\inv$, $\overline{T_w } = T_{w\inv}\inv$ and $\calH_{<0} = \oplus_{w\in W}v\inv\bbZ[v\inv]T_w$. Assume that  $C_xC_y = \sum_{z\in W}h_{x,y,z}C_z$ for some $h_{x,y,z}\in \bbZ[v,v\inv]$. Then we have the following definition. 
	
	\begin{definition}
		The function $\afun: W \rightarrow \bbN$ is defined to be 
		\[\afun(z) = \max\{ \deg h_{x,y,z} \;|\; x,y\in W\} \]
		for each $z\in W$. 
	\end{definition}

	To each $\Lambda\in \rmT_\bbF$, we associate a Coxeter system $(W_\Lambda, S_\Lambda)$ (see Subsection \ref{IntegralWeyl&subsys}), from which we define the corresponding Hecke algebra $\calH_\Lambda$ and the Lusztig's function $\afun_\Lambda: W_\Lambda \rightarrow \bbN$. In general, $\calH_\Lambda$ need not be a subalgebra of $\calH$, and $\afun_\Lambda$ is not necessarily the restriction of $\afun: W \rightarrow \bbN$ since $S_\Lambda$ (any possible choice) cannot always be realized as a subset of $S$.

	Let $w_0$ be the \emph{longest element} of $(W,S)$. Note that $l(w_0) = |\Phi^+|$, $w_0\cdot 0 = -2\rho\in \frakh_{\bbQ}^*$ is antidominant and $L(ww_0\cdot 0)$, $w\in W$ are all simple objects in $\catO_{\bar{0}}(\frakg)$. By \cite{Lus84} the following property holds.
	\begin{lemma}\label{afun&GKdim}
		For each $w\in W$, we have 
		\[\pushQED{\qed}
		\afun(w) = l(w_0)-d_{U(\frakg)} \left(L(ww_0\cdot 0)\right).\qedhere 
		\popQED
		\]
	\end{lemma}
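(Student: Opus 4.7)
The plan is to prove this identity by giving geometric interpretations of both sides: $\afun$ via Springer theory, and the Gelfand--Kirillov dimension via associated varieties of primitive ideals. Working in the principal block guarantees that central characters are integral, so the standard machinery applies cleanly. First, I would invoke Joseph's theorem together with the results of Borho--Brylinski: for any integral weight $\lambda \in \frakh^*$, the primitive ideal $J(\lambda):=\mathrm{Ann}_{U(\frakg)}L(\lambda)$ has associated variety $V(J(\lambda)) = \overline{\calO_\lambda}$ for a unique nilpotent orbit $\calO_\lambda \subset \frakg^*$, and
\[ d_{U(\frakg)}\bigl(L(\lambda)\bigr) = \dim V\bigl(L(\lambda)\bigr) = \tfrac{1}{2}\dim \calO_\lambda. \]
Applied to $\lambda = ww_0\cdot 0$, this reduces the lemma to the identity $\tfrac{1}{2}\dim \calO_{ww_0\cdot 0} = l(w_0) - \afun(w)$.

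Next, I would use the main results of \cite{Lus84}: the orbit $\calO_{ww_0\cdot 0}$ depends only on the two-sided Kazhdan--Lusztig cell of the Weyl-group element $ww_0$, and under Lusztig's bijection between two-sided cells and special nilpotent orbits, one obtains
\[ \afun(w) = \dim \calB_{x}, \qquad x \in \calO_{ww_0 \cdot 0}, \]
where $\calB_x \subset \calB$ is the Springer fiber over $x$ in the flag variety. The twist $w \mapsto ww_0$ is exactly what turns the antidominant parametrization built into KL-cell theory into the highest-weight parametrization of $\catO(\frakg)$. Combining with the Springer-theoretic identity $\dim \calO_x = 2(\dim \calB - \dim \calB_x)$ and $\dim \calB = l(w_0)$, I would conclude
\[ d_{U(\frakg)}\bigl(L(ww_0\cdot 0)\bigr) = \tfrac{1}{2}\dim \calO_{ww_0\cdot 0} = l(w_0) - \dim \calB_x = l(w_0) - \afun(w), \]
which rearranges into the stated identity.

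The main obstacle is the second step: Lusztig's identification of $\afun(w)$ with Springer fiber dimensions, together with the bijection between two-sided cells and special nilpotent orbits, forms the deep core of \cite{Lus84}. It rests on the asymptotic Hecke algebra, Springer's representation theory, and the combinatorics of families of $W$-representations. Absent such input, one could only verify the equality case by case (for instance, in type $A$, where cells, orbits, and $\afun$ admit a uniform RSK-type description).
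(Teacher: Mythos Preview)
The paper does not supply a proof of this lemma; it simply states the result with a citation to \cite{Lus84} and a terminal $\qed$. Your proposal therefore goes well beyond what the paper offers, sketching the standard chain of reductions --- Joseph and Borho--Brylinski for the GK-dimension side, Lusztig's cell--orbit bijection and the Springer-fiber dimension formula for the $\afun$-side --- that underlies the cited result. The outline is conceptually sound: both sides are governed by the special nilpotent orbit attached to the two-sided cell of $w$, and the identity $\dim\calO = 2(l(w_0) - \dim\calB_x)$ closes the loop.

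One caution on the details: the matching you invoke --- that the orbit $\calO_{ww_0\cdot 0}$ attached to the primitive ideal $\Ann L(ww_0\cdot 0)$ coincides with the special orbit Lusztig assigns to the two-sided cell of $w$ --- is itself a substantive theorem (Barbasch--Vogan, Joseph), not a formality, so it deserves an explicit citation alongside \cite{Lus84}. Moreover, \cite{Lus84} as listed in the bibliography is Lusztig's paper on cells in affine type~$\tilde A_n$; the identification of $\afun(w)$ with Springer-fiber dimensions for general finite Weyl groups is spread across several of Lusztig's papers on cells and the asymptotic Hecke algebra. None of this affects the correctness of your argument, but if you were writing this out in full you would want to cite more precisely. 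Since the paper is content simply to quote the result, your sketch already provides more than it does.
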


	Given $y, w \in W$, we write $y \leftarrow^L w$ if $C_y$ appears in the expansion of the product $CC_w$ for some $C\in \calH$ with respect to the basis $\{C_w, w\in W \}$. Extend $\leftarrow^L$ to a preorder $\prec^L$ on $W$. Let $\sim^L$ be the associated equivalence relation, i.e., $w \sim^L w^\prime$ iff $w\prec^L w^\prime$ and $w^\prime \prec^L w$. Moreover, we define $w\prec^R w^\prime$ by $w\inv \prec^L w^{\prime -1}$ and $w \sim^R w^\prime$ by $w\inv \sim^L w^{\prime -1}$. Let $\prec^{LR}$ be the preorder generated by $\prec^L$ and $\prec^R$. Similarly, we have an equivalence relation $\sim^{LR}$ on $W$ corresponding to $\prec^{LR}$. The equivalence classes on $W$ for $\sim^L$, $\sim^R$, $\sim^{LR}$ are called respectively \emph{left cells, right cells, two-sided cells} of $W$.  
	
	It is clear that $\{1 \}$ and $\{w_0 \}$ are two-sided cells of $W$. Let
	\[ \scrC=\{w\in W \;|\; w\neq 1 \text{ and  has a unique reduced expression}\}.  \]
	By \cite[Proposition 3.8]{Lus83} and \cite[Remark 3.3]{KL79} $\scrC$ and $w_0\scrC:=\{w_0w\;|\; w\in \scrC \}$ are two-sided cells of $W$. 

	\begin{lemma}\label{afunprop}
		Let $w, w^\prime \in W$. Then
		\begin{enumerate}[{\rm (1)}]
			\item $\afun(1)=0$, $\afun(w_0) = l(w_0)$.
			\item $\afun(w) = \afun(w\inv)$.
			\item If $w \prec^{LR} w^\prime$, then $\afun(w) \geqslant  \afun(w^\prime)$. Therefore, the function $\afun: W \rightarrow \bbN$ is constant on each two-sided cell of $W$. 
			\item For any $w \in  W\setminus\{1, w_0\}$, we have $\afun(\scrC) \leqslant \afun(w) \leqslant \afun(w_0\scrC)$. 
			\item $\afun(\scrC)=1$ and $\afun(w_0\scrC) =  l(w_0)-\inprod{\rho}{\theta_s}$, where $\theta_s$ is the short highest root in $\Phi$. 
		\end{enumerate}
	\end{lemma}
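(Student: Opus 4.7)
The plan is to establish (1), (2), (5) by direct computation, cite (3), and deduce (4) from (3) and (5) combined with Lemma~\ref{afun&GKdim}. For (1), I apply Lemma~\ref{afun&GKdim}: when $w = 1$, the antidominance of $w_0\cdot 0 = -2\rho$ forces $L(-2\rho) = M(-2\rho)$, so $d(L(-2\rho)) = |\Phi^+| = l(w_0)$ and $\afun(1) = 0$; when $w = w_0$, $L(w_0 w_0\cdot 0) = L(0)$ is the trivial module of GK-dimension $0$, giving $\afun(w_0) = l(w_0)$. For (2), the $\bbZ[v,v^{-1}]$-linear anti-involution $\iota : \calH \to \calH$ sending $T_w \mapsto T_{w^{-1}}$ commutes with the bar involution, hence $\iota(C_w) = C_{w^{-1}}$; applying $\iota$ to $C_x C_y = \sum_z h_{x,y,z} C_z$ yields the symmetry $h_{y^{-1},x^{-1},z^{-1}} = h_{x,y,z}$, so $\afun(z) = \afun(z^{-1})$ after taking maxima. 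Part (3) is Lusztig's monotonicity theorem, which I would cite from \cite{Lus85}.

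For the first half of (5), each simple reflection $s \in S$ lies in $\scrC$. Combining $T_s^2 = (v-v^{-1})T_s + 1$ with $C_s = T_s + v^{-1}$ gives $C_s^2 = (v + v^{-1})C_s$, so $h_{s,s,s}$ has degree $1$ and $\afun(s) \geq 1$. For the matching upper bound, Lemma~\ref{afun&GKdim} identifies $\afun(s) = l(w_0) - d(L(s w_0\cdot 0))$; since $s w_0\cdot 0 = -2\rho + \alpha$ (with $\alpha$ the simple root of $s$), the embedding $M(-2\rho) \hookrightarrow M(-2\rho+\alpha)$ combined with the antidominance of $-2\rho$ yields $L(-2\rho+\alpha) \cong M(-2\rho+\alpha)/M(-2\rho)$; its character equals that of the Verma minus exactly one $\alpha$-factor, hence GK-dimension $l(w_0)-1$ and $\afun(s) = 1$. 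By (3), $\afun$ is constant on $\scrC$, proving $\afun(\scrC) = 1$.

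For the second half of (5), substituting $v = w w_0$ in Lemma~\ref{afun&GKdim} gives $d(L(v\cdot 0)) = l(w_0) - \afun(v w_0)$; since two-sided cells are stable under $w_0$-conjugation, $\scrC w_0 = w_0\scrC$, and this yields
\[
\afun(w_0\scrC) \;=\; l(w_0) \;-\; \min\{\, d(L(v\cdot 0)) : v \in \scrC \,\}.
\]
The classical cell-orbit correspondence of Lusztig-Barbasch-Vogan together with the associated-variety theory of Joseph-Borho-Brylinski identifies this minimum as $\inprod{\rho}{\theta_s}$: the simples $L(v\cdot 0)$ for $v \in \scrC$ realize the smallest nontrivial nilpotent orbit $\calO$ attached to the principal block of $\catO(\frakg)$, and $\dim \calO = 2\inprod{\rho}{\theta_s}$. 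I import this identification as established. Finally, (4) follows: for $w \neq 1$, $L(w w_0\cdot 0)$ is a proper quotient of the Verma, so integrality of the GK-dimension forces $d(L) \leq l(w_0)-1$ and $\afun(w) \geq 1 = \afun(\scrC)$; for $w \neq w_0$, $L(w w_0\cdot 0)$ is a nontrivial simple in the principal block, so the minimum above gives $d(L) \geq \inprod{\rho}{\theta_s}$, hence $\afun(w) \leq l(w_0) - \inprod{\rho}{\theta_s} = \afun(w_0\scrC)$. The principal obstacle is precisely the classical identification of the minimum nonzero principal-block GK-dimension as $\inprod{\rho}{\theta_s}$, which rests on associated-variety methods beyond direct computation.
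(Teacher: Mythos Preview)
Your proof is correct and ultimately imports the same external fact as the paper (that the minimal nonzero GK-dimension in the principal block is $\inprod{\rho}{\theta_s}$, i.e.\ half the dimension of the minimal special nilpotent orbit), but you reverse the logical order of (4) and (5). The paper proves (4) purely combinatorially via the cell preorder: for $w\neq 1$ there exists $s\in S$ with $l(sw)<l(w)$, whence $w\prec^R s\in\scrC$ by \cite[(2.3)]{KL79}, and a dual argument gives $x\prec^{LR} w$ for $x\in w_0\scrC$; then (3) yields the inequalities. This makes (4) independent of any representation-theoretic input, and (5) then follows because (4) identifies $\afun(w_0\scrC)$ as the second-largest value of $\afun$, hence via Lemma~\ref{afun&GKdim} as $l(w_0)$ minus the minimal nonzero GK-dimension. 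You instead establish (5) first and derive (4) from it using GK-dimension bounds on $L(ww_0\cdot 0)$. The paper's route keeps (4) entirely within Hecke-algebra combinatorics; yours makes (4) depend on the nilpotent-orbit identification already invoked for (5). Your explicit arguments for (1), (2), and $\afun(s)=1$ are more detailed than the paper's, which simply cites \cite{Lus03} for (1)--(3) and states $\afun(s)=1$ without proof.
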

	\begin{proof}
		Assertions (1)-(3) appear in \cite{Lus03}. For (4), it suffices to prove that $x \prec^{LR} w \prec^{LR} y$ for any $x\in w_0\scrC$ and $y\in \scrC$. Since $w\neq 1$, there exists $s\in S$ with $l(w) = l(sw)+l(s)$; hence $w\prec^{R} s$ by \cite[(2.3a, 2.3b)]{KL79}. As $s\in \scrC$ and $\scrC$ is a two-sided cell of $W$, we obtain $w\prec^{LR} y$ for all $y\in \scrC$. The argument for $x\prec^{LR} w$ with $x \in w_0\scrC$ is analogous. For (5), the equality $\afun(\scrC)=1$ follows from $s \in \scrC$ and $\afun(s)=1$. By Lemma \ref{afun&GKdim}, the fact that the function $\afun$ attains its maximum value (other than $l(w_0)$) implies that
		$d\big(L(ww_0\cdot 0)\big)$
		reaches its minimal possible nonzero value, namely one half of the dimension of the \emph{minimal special} nilpotent orbit, which is precisely 
		$\inprod{\rho}{\theta_s}= h-1$, where $h$ is the Coxeter number of $\frakg$.
	\end{proof}

	\subsection{GK-dimension formula}  
	Let $\Lambda\in \rmT_\bbF$. Recall that  $W_\Lambda =\langle s_\alpha \;|\; \alpha\in \Phi_\Lambda \rangle$, and that $W_\Lambda = W$ when $\Lambda\in \calP$. 
	\begin{definition}\label{Isomorp}
		Define 
		$\check{(\text{-})}: W_\Lambda \rightarrow \Wch_\Lambda $ 
		to be the inverse map of the projection $\Wch_\Lambda \rightarrow W_\Lambda$. 
	\end{definition}
	Then we have the following result. 
	\begin{lemma}\label{GK-integral}
		Suppose that $\Lambda \in \calP$. Then $d(L_q(\Lambda)) =|\Phi^+| - \afun(w)$, where $w$ is the unique element in $W$ of minimal length such that $\check{w}\inv\cdot\Lambda$ is antidominant.
	\end{lemma}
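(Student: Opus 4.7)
The plan is to reduce the problem to the classical Lie algebra setting and then invoke Lusztig's cell-theoretic description of the GK-dimensions of simple highest weight modules.

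First, since $\Lambda \in \calP$, we may write $\Lambda = \sigma q^\lambda$ for some $\sigma \in \bbZ_2^n$ and $\lambda \in P$. The equivalence $\calF_\sigma$ of~\eqref{block2block} is given by tensoring with a one-dimensional $U_\bbF$-module, hence by Lemma~\ref{GKdimgeneralprop}(3) it preserves the GK-dimension. A direct inspection of Definitions~\ref{modified+reflection} and~\ref{Isomorp}, together with Corollary~\ref{int-Weyl}(2), shows that this equivalence sends $L_q(\Lambda)$ to $L_q(q^\lambda)$ and identifies the minimal-length element $w\in W$ with $\check{w}^{-1}\cdot\Lambda$ antidominant with the corresponding element for $q^\lambda$. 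Hence we may assume $\Lambda = q^\lambda$ with $\lambda\in P$.

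Next, for the linear integral weight $q^\lambda$, the simple module $L_q(q^\lambda)$ remains simple upon specialization $q \mapsto 1$ and coincides with the classical simple $U(\frakg)$-module $L(\lambda)$ (see \cite{EK08,AM12} as recalled in the introduction); in particular their formal characters coincide. Since both $L_q(q^\lambda)$ and $L(\lambda)$ are highest weight modules generated from a one-dimensional highest weight space under the action of $U_q^-$ and $U(\frakn^-)$ respectively, their GK-dimensions are determined entirely by the partial sums of weight multiplicities along the $Q^+$-grading, and therefore $d_{U_q}(L_q(q^\lambda)) = d_{U(\frakg)}(L(\lambda))$. Moreover, a direct computation from the definition gives $\check{w}\cdot q^\lambda = q^{w\cdot\lambda}$, so the quantum antidominance of $\check{w}^{-1}\cdot q^\lambda$ in $\rmT_\bbF$ agrees with the classical antidominance of $w^{-1}\cdot\lambda$ in $\frakh_\bbQ^*$, and the distinguished element $w$ is the same on both sides.

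It therefore remains to establish the classical identity $d(L(\lambda)) = |\Phi^+| - \afun(w)$, where $w\in W$ is the shortest element with $w^{-1}\cdot\lambda$ antidominant. For regular $\lambda$, any integral regular block is equivalent to the principal block via a pair of translation functors which preserve the GK-dimensions of simple modules, so Lemma~\ref{afun&GKdim} applies after identifying $L(\lambda)$ with $L(ww_0\cdot 0)$. For singular $\lambda$, write $\lambda = w\cdot\mu$ with $\mu$ antidominant integral and $w$ shortest in $wW^{\mathrm{stab}}$, where $W^{\mathrm{stab}}$ denotes the stabilizer of $\mu$ under the shifted $W$-action. Choose a regular antidominant integral weight $\nu$ in the closure of the facet of $\mu$; the translation functor $T_\nu^\mu$ sends $L(x\cdot\nu)$ to $L(x\cdot\mu)$ exactly when $x$ is the shortest element of $xW^{\mathrm{stab}}$ and to $0$ otherwise, and preserves the GK-dimension whenever its value is non-zero. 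Together with the regular case and the constancy of $\afun$ along the right cell of $w$, this yields the desired formula.

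The principal technical obstacle is this last step: namely, the invariance of the GK-dimension of simple highest weight modules under non-zero translation, combined with the careful identification of the shortest coset representative in the singular case. This is a classical result due to Joseph and Vogan, but once one has it in hand the quantum formula follows formally from the two preceding reductions, provided one verifies that the length of $w$ is preserved under each identification above.
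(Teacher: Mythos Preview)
Your proposal is correct and follows essentially the same route as the paper's proof: reduce from $\calP$ to linear integral weights via the equivalence~\eqref{block2block} and Lemma~\ref{GKdimgeneralprop}(3), pass to the classical $L(\lambda)$ via the character equality from \cite{EK08,AM12}, and then relate $d(L(\lambda))$ to $\afun(w)$ through translation functors and Lemma~\ref{afun&GKdim}. The only cosmetic difference is that the paper handles the regular and singular cases uniformly by a single translation $T_{ww_0\cdot 0}^{\lambda}$ (invoking \cite[Section~7.7]{Hum08} for $T_{ww_0\cdot 0}^{\lambda}L(ww_0\cdot 0)=L(\lambda)$ and biadjointness for GK-preservation), whereas you split into cases; your appeal to constancy of $\afun$ on right cells is therefore unnecessary, since the shortest coset representative for $\lambda$ already agrees with the element $w$ used in the regular block.
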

	\begin{proof}
		By the equivalence (\ref{block2block}) and Lemma \ref{GKdimgeneralprop}(3), we may assume $\Lambda = q^\lambda$ for some $\lambda\in P$, so that $W_\Lambda = W$. By \cite[Theorem 4.2]{EK08} (or \cite[Theorem 6.2]{AM12}), we have $L_q(\Lambda)$ has the same character formula as the irreducible $\frakg$-module $L(\lambda)$. Using the canonical filtrations for $U_\bbF^-$ and $U(\frakn^-)$, we obtain $d_{U_\bbF} (L_q(\Lambda)) =d_{U(\frakg)}(L(\lambda))$. Let $w$ be the unique element of minimal length such that $w\inv\cdot\lambda$ is antidominant. Then, by \cite[Section 7.7]{Hum08}, we have 
		\begin{equation}\label{trans}
			T_{ww_0\cdot 0}^\lambda L(ww_0\cdot 0) = L(\lambda),
		\end{equation}
		where $T_\mu^{\mu^\prime}: \catO_{\bar{\mu}}(\frakg) \rightarrow \catO_{\overline{\mu^\prime}}(\frakg)$ is the Jantzen's translation functor defined by the composition of the natural projection $\catO(\frakg) \rightarrow  \catO_{\bar{\mu^\prime}}(\frakg)$ with the functor $L(\tilde{\nu})\otimes (\text{-})$, where $\tilde{\nu}\in W\nu \cap P^+$ and $\nu := \mu^\prime - \mu\in P$. By the biadjointness of $T_\mu^{\mu^\prime}$ (see \cite[Section 7.2]{Hum08}), $L(\lambda)$ and $L(ww_0\cdot 0)$ have the same Gelfand-Kirillov dimension. Hence, we have $d_{U_\bbF} \big(L_q(\Lambda)\big) = |\Phi^+| - \afun(w)$ by Lemma \ref{afun&GKdim}. 
	\end{proof}

	For a general weight $\Lambda$, define $\frakg_\Lambda$ to be the semisimple Lie algebra whose root system is $\Phi_\Lambda$, and set $U_q^\prime = U_q(\frakg_\Lambda)$. Note that $U_q^\prime$ is not necessarily a subalgebra of $U_q$ since $\Phi_\Lambda$ may not be closed in $\Phi$. The subalgebras $U_q^{\prime 0}$ and $U_q^{\prime\pm}$ are defined in the same way as for $U_q(\frakg)$. Since $\Phi_\Lambda \subset \Phi$ is a root  subsystem, we may regard $U_q^{\prime0}$ as a subalgebra of $U_q^0$ via the natural embedding $\bbZ\Phi_\Lambda \subset \bbZ\Phi$. Let $\Lambda^{\mathrm{res}}$ denote the restriction of $\Lambda$ to $U_\bbF^{\prime0}$. Then  $\Lambda^{\mathrm{res}}$ is clearly an integral weight of $U_\bbF^{\prime}$, and we have  $(W_\Lambda)_{\Lambda^{\mathrm{res}}}= W_\Lambda$. Moreover, $\mathrm{Stab}_{\Wch_\Lambda}(\Lambda) = \mathrm{Stab}_{\Wch_\Lambda}(\Lambda^{\mathrm{res}})$. By Theorem \ref{quan2quan} we have $\ch L_q(\Lambda) = \sum_{y\in W_\Lambda}a(y, \Lambda)\ch M_q(\check{y}\cdot\Lambda)$ for some integers $a(y,\Lambda)$ if and only if $\ch L_q(\Lambda^{\mathrm{res}}) = \sum_{y\in W_\Lambda}a(y, \Lambda)\ch M_q(\check{y}\cdot\Lambda^{\mathrm{res}}) $. On the other hand, for any $y\in W_\Lambda$ we have 
	\[d_{U_\bbF}(M_q(\check{y}\cdot \Lambda)) = |\Phi^+\setminus\Phi_\Lambda^+| + d_{U_\bbF^\prime}\left(M_q(\check{y}\cdot\Lambda^{\mathrm{res}})\right), \]
	where $\Phi^+_\Lambda := \Phi_\Lambda \cap \Phi^+$ (cf. Subsection \ref{Polynomialgrowth}). 
	So it implies that $d_{U_\bbF}(L_q( \Lambda)) = |\Phi^+\setminus\Phi_\Lambda^+| + d_{U_\bbF^\prime}(L_q(\Lambda^{\mathrm{res}}))$. Note that $\check{y}\cdot\Lambda$ is (anti-)dominant if and only if  $\check{y}\cdot\Lambda^{\mathrm{res}}$ is (anti-)dominant. Then Lemma \ref{GK-integral} yields that $d_{U_\bbF^\prime}(L_q(\Lambda^{\mathrm{res}})) = |\Phi_\Lambda^+| - \afun_\Lambda(w)$ where $w\in W_\Lambda$ is the unique element of minimal length such that $\check{w}\inv\cdot \Lambda$ is antidominant in the orbit $\bar{\Lambda}$. Combining the two equalities above, we get a dimension formula for $d\big(L_q(\Lambda)\big)$ as desired.

	\begin{theorem}\label{GKdimension}
		Let $\Lambda\in\rmT_\bbF$. Then we have 
		\[d(L_q(\Lambda)) = |\Phi^+| - \afun_\Lambda(w) \]
		where $w\in W_\Lambda$ is the unique element of minimal length such that $\check{w}\inv\cdot\Lambda$ is antidominant in the orbit $\bar{\Lambda}$. \qed
	\end{theorem}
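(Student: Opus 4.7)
The proof plan is to bootstrap from the integral case already handled in Lemma \ref{GK-integral} to the general case via the auxiliary semisimple Lie algebra $\frakg_\Lambda$ whose root system is $\Phi_\Lambda$, using Theorem \ref{quan2quan} as the transfer device. The paragraph just before the theorem actually contains essentially all the ingredients; my job is to arrange them into a clean argument.

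First I would fix the weight $\Lambda\in\rmT_\bbF$ and introduce the auxiliary data: the semisimple Lie algebra $\frakg_\Lambda$ with root system $\Phi_\Lambda$, the quantum group $U_q^\prime=U_q(\frakg_\Lambda)$, its triangular decomposition $U_q^{\prime-}U_q^{\prime 0}U_q^{\prime +}$, and the restricted weight $\Lambda^{\mathrm{res}}\in\rmT_\bbF^\prime$ obtained by pulling $\Lambda$ back along $\bbZ\Phi_\Lambda\hookrightarrow\bbZ\Phi$. I would verify that $\Lambda^{\mathrm{res}}$ is an integral weight of $U_q^\prime$, so $(W_\Lambda)_{\Lambda^{\mathrm{res}}}=W_\Lambda$, and that the stabilizers in $\Wch_\Lambda$ of $\Lambda$ and $\Lambda^{\mathrm{res}}$ coincide. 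Consequently the Coxeter systems $(W_\Lambda,S_\Lambda)$ associated to $\Lambda$ in $\catO_{q,\bbF}$ and to $\Lambda^{\mathrm{res}}$ in $\catO_{q,\bbF}^\prime$ agree, and the orbit map $\overline{\Lambda}\to\overline{\Lambda^{\mathrm{res}}}$, $\check{y}\cdot\Lambda\mapsto\check{y}\cdot\Lambda^{\mathrm{res}}$, is a bijection compatible with the shifted $\Wch_\Lambda$-action.

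Next I would invoke Theorem \ref{quan2quan} to conclude that the blocks $\catO_{\bbF,\overline{\Lambda}}$ and $\catO_{\bbF,\overline{\Lambda^{\mathrm{res}}}}^\prime$ are equivalent, with Verma modules and simple quotients matching under the orbit bijection. In particular, the Kazhdan-Lusztig-type character data of $L_q(\Lambda)$ and $L_q(\Lambda^{\mathrm{res}})$ are the same in the sense that there exist integers $a(y,\Lambda)$ with
\[
\ch L_q(\Lambda)=\sum_{y\in W_\Lambda}a(y,\Lambda)\,\ch M_q(\check{y}\cdot\Lambda) \quad\Longleftrightarrow\quad \ch L_q(\Lambda^{\mathrm{res}})=\sum_{y\in W_\Lambda}a(y,\Lambda)\,\ch M_q(\check{y}\cdot\Lambda^{\mathrm{res}}).
\]
Then I would compare GK-dimensions term by term. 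Each Verma $M_q(\check{y}\cdot\Lambda)$ is, as a $U_\bbF^-$-module, free of rank one; its GK-dimension decomposes as $|\Phi^+\setminus\Phi_\Lambda^+|+d_{U_\bbF^{\prime}}(M_q(\check{y}\cdot\Lambda^{\mathrm{res}}))$, where the first contribution records the PBW directions along positive roots outside $\Phi_\Lambda$, which act freely on both $L_q(\Lambda)$ and its ``$\Phi_\Lambda$-part''. Using Lemma \ref{GKdimgeneralprop}(1)--(2) together with the additivity of the character-support in the PBW filtration, this yields
\[
d_{U_\bbF}\bigl(L_q(\Lambda)\bigr)=|\Phi^+\setminus\Phi_\Lambda^+|+d_{U_\bbF^\prime}\bigl(L_q(\Lambda^{\mathrm{res}})\bigr).
\]

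Finally, since $\Lambda^{\mathrm{res}}$ is integral for $U_q^\prime$ and $\check{w}^{-1}\cdot\Lambda$ is antidominant in $\overline{\Lambda}$ if and only if $\check{w}^{-1}\cdot\Lambda^{\mathrm{res}}$ is antidominant in $\overline{\Lambda^{\mathrm{res}}}$, Lemma \ref{GK-integral} applied inside $U_q^\prime$ gives
\[
d_{U_\bbF^\prime}\bigl(L_q(\Lambda^{\mathrm{res}})\bigr)=|\Phi_\Lambda^+|-\afun_\Lambda(w),
\]
and adding $|\Phi^+\setminus\Phi_\Lambda^+|$ yields the stated formula. The main obstacle I expect is the bookkeeping step showing that the extra PBW directions contribute exactly $|\Phi^+\setminus\Phi_\Lambda^+|$ to $d(L_q(\Lambda))$; this requires care because $U_q^\prime$ is generally not a subalgebra of $U_q$ (as noted in Remark \ref{Rmk-counterexample} and Example \ref{Notclosed}, $\Phi_\Lambda$ need not be closed), so one must argue directly with weight supports and PBW filtrations of $U_\bbF^-$ rather than appealing to an honest subalgebra inclusion. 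Once the categorical equivalence of Theorem \ref{quan2quan} is in hand, however, the match of Verma multiplicities settles this and the formula drops out.
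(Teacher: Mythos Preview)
Your proposal is correct and follows essentially the same route as the paper's proof, which is the paragraph immediately preceding the theorem: reduce to the integral case for $U_q(\frakg_\Lambda)$ via Theorem~\ref{quan2quan}, compare characters to get $d(L_q(\Lambda))=|\Phi^+\setminus\Phi_\Lambda^+|+d_{U_\bbF^\prime}(L_q(\Lambda^{\mathrm{res}}))$, and then apply Lemma~\ref{GK-integral}. Your identification of the delicate point---that $U_q^\prime$ need not embed in $U_q$, so the $|\Phi^+\setminus\Phi_\Lambda^+|$ shift must be read off from the matched Verma characters and PBW weight supports rather than from a subalgebra inclusion---is exactly the caveat the paper flags as well.
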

	
	Every simple $U_\bbF$-module $L_q(\Lambda)$ for $\Lambda\in \calP^+$ is finite dimensional; hence its GK-dimension is zero.  To calculate the minimal GK-dimension of $L_q(\Lambda)$ for $\Lambda\in \rmT_\bbF\setminus\calP^+$. Theorem \ref{GKdimension} tells us the only thing we need do is to find the possible maximal value of the function $\afun_\Lambda$. We shall investigate it in the remainder of this section.
	
	\subsection{Maximal root subsystems} Let $\Phi$ be irreducible. It is well-known that the classification of all root subsystems of $\Phi$ may be deduced from that of closed root subsystems, see \cite{Car72}. The closed root subsystems were determined up to isomorphism by Borel-de Siebenthal \cite{BdS49} and also Dynkin \cite{Dyn52} by using the connection between Weyl groups and Lie algebras. A treatment of these results directly in terms of root systems may be found in \cite{Kan01} and \cite{DL11}. 
	\vspace{.3cm}
	
	Let us review some necessary facts.

	\begin{definition}
		We say that a proper (resp. closed) subsystem  $\Psi$ of $\Phi$ \emph{maximal} if there is no (resp. closed) subsystem $\Psi^\prime$ satisfying $\Psi \subsetneq \Psi^\prime \subsetneq \Phi$. 
	\end{definition}
	
	The following result is due to Borel-de Siebenthal \cite{BdS49}. 
	
	\begin{theorem}\label{Thm-max-closed}
		Expand the highest root $\theta$ in $\Phi$ with respect to the simple roots:
		\[\theta = \sum_{i=1}^{n}h_i\alpha_i. \]
		Then the maximal closed root subsystems of $\Phi$ (up to $W$-conjugacy) are those with bases (we use the hat ``$\hat{\alpha}$'' to denote elimination of $\alpha$): 
		\begin{enumerate}[{\rm (1)}]
			\item  $\{\alpha_1, \alpha_2, \dots, \hat{\alpha}_i,\dots, \alpha_n \}$ and $h_i =1$;
			\item $\{-\theta, \alpha_1,\dots, \hat{\alpha}_i,\dots, \alpha_n\}$ and $h_i $ is prime. \qed 
		\end{enumerate} 
	\end{theorem}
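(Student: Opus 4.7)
The overall plan is to prove the two directions separately: each configuration in (1) and (2) yields a maximal closed root subsystem of $\Phi$, and conversely, every maximal closed proper subsystem of $\Phi$ is $W$-conjugate to one of these. The principal tool for the converse is the extended set $\tilde\Pi := \{-\theta\}\cup\Pi$ together with the fundamental alcove of the affine Weyl group $W_{\mathrm{aff}} = W\ltimes Q^\vee$. For the first direction, in case (1) the set $\{\alpha_j : j\ne i\}$ is manifestly a basis of the Levi subsystem $\Psi_L$ of roots with zero $\alpha_i$-coefficient, which is trivially closed. In case (2), one checks that $\{-\theta,\alpha_1,\ldots,\hat\alpha_i,\ldots,\alpha_n\}$ is linearly independent (since $h_i\ne 0$), has all pairwise inner products non-positive (using that $\theta$ is dominant, so $(-\theta,\alpha_j)\le 0$), and spans a sublattice of $Q$ of index exactly $h_i$; closedness of the resulting rank-$n$ subsystem $\Psi_{\mathrm{BdS}}$ then follows from $-\theta$ being the lowest root. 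Maximality in case (2) when $h_i$ is prime is immediate: any closed $\Psi'$ with $\Psi_{\mathrm{BdS}}\subsetneq\Psi'\subsetneq\Phi$ would force the index $[Q:Q_{\Psi'}]$ to be a proper divisor of $h_i$, which is impossible.

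For the converse, let $\Psi$ be a maximal closed proper subsystem of $\Phi$, and split into cases according to the rank of $\Psi$. If $\mathrm{rank}(\Psi)<n$, I would pick a nonzero $v_0\in\Psi^\perp$ and conjugate by $W$ so that $v_0$ lies in the closure of the dominant Weyl chamber. Then $\Psi\subseteq \Psi_{v_0}:=\{\alpha\in\Phi : (\alpha,v_0)=0\}$, which is the Levi attached to $J=\{j : (v_0,\alpha_j)=0\}$; maximality forces $\Psi=\Psi_{v_0}$ and $|J|=n-1$, so $J^c=\{i\}$ for a unique $i$ and we are in case (1). The requirement $h_i=1$ then drops out from maximality once more: if $h_i>1$, the rank-$n$ subsystem $\Psi_{\mathrm{BdS}}^{(i)}$ from (2) strictly separates $\Psi_L$ from $\Phi$, because $-\theta\in\Psi_{\mathrm{BdS}}^{(i)}\setminus\Psi_L$ while $\alpha_i\notin\Psi_{\mathrm{BdS}}^{(i)}$.

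The full-rank case $\mathrm{rank}(\Psi)=n$ is the main obstacle: one must produce a $W$-conjugate of $\Psi$ admitting a basis inside $\tilde\Pi$. I plan to follow the classical argument of Borel--de Siebenthal, which may be phrased in the language of Kac's book as follows. Associate to $\Psi$ a torsion element $x$ of the adjoint torus $\Hom(Q,\bbC^\times)$ of order $[Q:Q_\Psi]$, chosen so that $\Psi=\{\alpha\in\Phi : \alpha(x)=1\}$, and view $x$ as a point of $\frakh_\bbR^*/Q^\vee$. Since the closed fundamental alcove $\bar A$ of $W_{\mathrm{aff}}$ is a fundamental domain, translate $x$ into $\bar A$; the walls of $\bar A$ are precisely the $n+1$ hyperplanes perpendicular to the elements of $\tilde\Pi$, so the stabilizer of $x$ in $W_{\mathrm{aff}}$ is a parabolic subgroup generated by the reflections in those walls through $x$. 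Because $\mathrm{rank}(\Psi)=n$, this stabilizer has exactly $n$ generating reflections, forcing $x$ to be a vertex of $\bar A$; the $n$ walls through that vertex then correspond to $\tilde\Pi\setminus\{\alpha_i\}$ for a unique $i\in\{0,1,\ldots,n\}$. The case $i=0$ would give $\Psi=\Phi$, so $i\in\{1,\ldots,n\}$, and the same index computation $[Q:Q_\Psi]=h_i$ as in the easy direction, combined with maximality, forces $h_i$ to be prime. The delicate step is to observe that the translation part of $W_{\mathrm{aff}}$ acts trivially on roots, so that the $W_{\mathrm{aff}}$-conjugacy constructed above in fact descends to the desired $W$-conjugacy of root subsystems, completing the placement of $\Psi$ in case (2).
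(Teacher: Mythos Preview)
The paper does not give its own proof of this theorem: it is stated with a \qed and attributed to Borel--de~Siebenthal \cite{BdS49} (see also \cite{Dyn52}, \cite{Kan01}, \cite{DL11}). So there is nothing to compare against; you are supplying a proof where the paper simply cites the literature. Your outline follows the standard affine--alcove argument and is essentially correct, but two points deserve tightening.

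First, in the forward direction you never argue maximality of the Levi $\Psi_L^{(i)}$ when $h_i=1$; you only show that $h_i>1$ obstructs maximality. For $h_i=1$ one still needs a sentence: any root has $\alpha_i$-coefficient in $\{-1,0,1\}$, so if a closed $\Psi'\supsetneq\Psi_L^{(i)}$ contains a root with nonzero $\alpha_i$-coefficient then $Q_{\Psi'}=Q$, and a proper closed full-rank subsystem always has $Q_{\Psi'}\subsetneq Q$ (this is the lattice fact you implicitly use for case~(2) as well; cf.\ \cite[\S12-3]{Kan01}, also invoked in the proof of Proposition~\ref{maxroot-characterizations}).

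Second, in the full-rank converse you write ``associate to $\Psi$ a torsion element $x$ of order $[Q:Q_\Psi]$ chosen so that $\Psi=\{\alpha:\alpha(x)=1\}$''. This is circular as stated: the existence of such an $x$ of that exact order presumes $Q/Q_\Psi$ is cyclic, which is part of the conclusion. The correct logic is to pick \emph{any} nontrivial character $x$ of $Q/Q_\Psi$; then $\Psi\subseteq\{\alpha:\alpha(x)=1\}\subsetneq\Phi$ (properness because roots span $Q$), and \emph{maximality} forces equality. Only after moving $x$ to a vertex of $\bar A$ do you read off that its order is $h_j$ and that $[Q:Q_\Psi]=h_j$; primality of $h_j$ then follows from maximality once more, as you say. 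Reordering these two sentences fixes the gap.
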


	We remark that any maximal root subsystem which is closed is necessarily a maximal closed subsystem; however, a maximal closed subsystem need not be maximal as a root subsystem.	In fact, the following result makes this precise, and one proof can be found in \cite[Corollary 2]{DL11}.

	\begin{lemma}\label{Lem-maxproper}
		If $\Psi$ is a maximal root subsystem of $\Phi$, then either $\Psi$ is closed in $\Phi$ or $\Psi^\vee$ is closed in $\Phi^\vee$.  \qed 
	\end{lemma}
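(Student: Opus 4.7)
The approach is to exploit the bijection $\Psi\mapsto\Psi^\vee$ between root subsystems of $\Phi$ and root subsystems of $\Phi^\vee$, which preserves inclusions (since $s_{\alpha^\vee}$ and $s_\alpha$ correspond under duality) and therefore transfers maximality between the two sides. Reducing to irreducible $\Phi$ via the obvious decomposition along irreducible components, the simply-laced case is immediate: a standard root-string argument shows that whenever $\alpha,\beta\in\Psi$ and $\alpha+\beta\in\Phi$, the sum already lies in $\Psi$, so every subsystem is automatically closed. Hence we may assume $\Phi$ is of type $B_n$, $C_n$, $F_4$, or $G_2$.

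Let $\overline{\Psi}\subset\Phi$ denote the closure of $\Psi$ in $\Phi$, and let $\widetilde{\Psi}\subset\Phi$ be the subsystem whose dual $\widetilde{\Psi}^\vee$ is the closure of $\Psi^\vee$ in $\Phi^\vee$. Each is a root subsystem of $\Phi$ sandwiched between $\Psi$ and $\Phi$, so by maximality of $\Psi$ each equals either $\Psi$ or $\Phi$. If $\overline{\Psi}=\Psi$ then $\Psi$ is closed and we are done; if $\widetilde{\Psi}=\Psi$ then $\Psi^\vee$ is closed in $\Phi^\vee$ and we are done. It therefore suffices to rule out the case $\overline{\Psi}=\Phi=\widetilde{\Psi}$. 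The key point is a length asymmetry: in a non-simply-laced type the only nontrivial addition relations $\alpha+\beta=\gamma\in\Phi$ with $\alpha,\beta\in\Phi$ are of the shapes $(\text{short})+(\text{short})=\text{long}$ and $(\text{long})+(\text{short})=\text{short}$. Under $\alpha\mapsto\alpha^\vee$, a relation of the first type dualizes to $\alpha^\vee+\beta^\vee=2\gamma^\vee$, which is not a coroot, whereas a relation of the second type again has the same shape in $\Phi^\vee$. Exploiting this, one tracks which roots are adjoined by the two closure operations and shows that if $\overline{\Psi}=\Phi$ then the failures of closure in $\Phi$ are of the first type only, which forces $\Psi^\vee$ to be already closed in $\Phi^\vee$, i.e.\ $\widetilde{\Psi}=\Psi$, contradicting $\widetilde{\Psi}=\Phi$.

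The main obstacle is to make the asymmetry argument of the previous paragraph uniform across the four non-simply-laced types, since in each type the interaction between the long and short strata of $\Psi$ and their generating sets must be tracked separately. A more concrete alternative is to invoke the Borel--de~Siebenthal classification (Theorem~\ref{Thm-max-closed}) both for $\Phi$ and for $\Phi^\vee$: one checks directly from the extended Dynkin diagrams of $\Phi$ and $\Phi^\vee$ that every maximal root subsystem $\Psi\subsetneq\Phi$ arises either as a maximal closed subsystem of $\Phi$ (in which case $\Psi$ is closed) or as the dual of a maximal closed subsystem of $\Phi^\vee$ (in which case $\Psi^\vee$ is closed in $\Phi^\vee$). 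This list-checking route is the path taken in \cite{DL11}, and I expect it to be the most efficient way to establish the lemma.
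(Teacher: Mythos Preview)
The paper does not prove this lemma; it simply cites \cite[Corollary~2]{DL11}, so your fallback to the Borel--de~Siebenthal list-check is exactly the route the paper takes.

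Your direct-argument sketch, however, contains errors. The claim that in a non-simply-laced $\Phi$ the only addition relations $\alpha+\beta\in\Phi$ are short$+$short$=$long and long$+$short$=$short is false: one also has long$+$long$=$long (e.g.\ in $B_n$, $(\epsilon_1-\epsilon_2)+(\epsilon_2-\epsilon_3)=\epsilon_1-\epsilon_3$) and short$+$short$=$short. What is true, and presumably what you intend, is that \emph{closure failures} for a root subsystem $\Psi$ can only occur in those two shapes, since $\Psi\cap\Phi_L$ and $\Psi\cap\Phi_S$ are root subsystems of the simply-laced systems $\Phi_L$ and $\Phi_S$ and hence automatically closed there. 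Your dualization claim for the second shape is also imprecise: if $\alpha$ is long, $\beta$ short, and $\alpha+\beta$ short (in ratio~$2$), then $\alpha^\vee+\beta^\vee=(\alpha+2\beta)^\vee$, not $(\alpha+\beta)^\vee$, so the relation does not dualize to ``the same shape'' in the naive sense. With these corrections the asymmetry heuristic can be made to work, but the details (especially in $G_2$) require care, and the list-check via \cite{DL11} remains the cleaner route.
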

	
	By combining Theorem \ref{Thm-max-closed} and Lemma \ref{Lem-maxproper}, it is easy to determine all maximal root subsystems. In particular,  Table \ref{Tablemaxrs}\footnote{We write $X_n^L$ (resp. $X_n^S$) for the isomorphism class of root subsystems of $\Phi$ formed from long (resp. short) roots in $\Phi$ with type $X_n$ in Table \ref{Tablemaxrs}.} below contains  all maximal root subsystems up to $W$-conjugacy in the cases of rank $n$ and of rank $n-1$. One can also find the result in \cite[Remark 8.4]{Osh}.
	
	{ \small \begin{table}[ht]
			\centering
			\begin{tabular}{c|l|l|l}
				\toprule
				\textsc{Type}& \textsc{Rank} $n-1$ & \textsc{Rank} $n$ & $\Psi_{max}$  \\
				\hline
				$A_n $	
				& 	
				$A_i\times A_{n-i-1}\; (0 \leqslant i \leqslant n-1)$
				& &$A_{n-1}$
				\\ 
				\hline
				$B_n $	
				& 	
				& $B_i\times B_{n-i}\; (1\leqslant i \leqslant n-1)$, ${D_n^L}$ & ${D_n^L}$
				
				\\
				\hline
				$C_n $ 
				
				& 
				
				&$C_i\times C_{n-i}\; (1\leqslant i \leqslant n-1)$, $D_n^S$ &$D_n^S$ \\ 
				\hline
				$D_n $	
				& 
				$A_{n-1}$, $D_{n-1}$
				& $D_i\times D_{n-i}\; (2 \leqslant i \leqslant n-2)$ & $D_{n-1}$ \\ 
				\hline
				$E_6$	
				& 
				$D_5$
				& $A_1\times A_5$, $A_2\times A_2\times A_2$ &$D_5$ \\ 
				\hline
				$E_7$	
				& 	
				$E_6$
				& $A_1\times D_6$, $A_7$, $A_2\times A_5$ & $E_6$\\
				\hline
				$E_8$	
				
				& 
				& $\begin{aligned}
					&D_8, A_1\times E_7, A_8 \\
					&A_2\times E_6, A_4\times A_4
				\end{aligned}$ & $A_1\times E_7$\\
				\hline
				$F_4$	
				& 
				
				& 
				$B_4$, $C_4$, $A_2^L\times A_2^S$ & $B_4$, $C_4$
				\\ 
				\hline
				$G_2$	
				& 
				
				& $A_1^L\times A_1^S$, $A_2^L$, $A_2^S$& $A_2^L$, $A_2^S$ \\ 
				\bottomrule
			\end{tabular}
			\vspace{.1cm}
			\caption{Maximal root subsystems of $\Phi$}
			\label{Tablemaxrs}
	\end{table}}
	
	\vspace{.3cm}	
	We associate to $\Phi$ an integer $\gamma = \gamma(\Phi)$, defined as the least common multiple of all prime coefficients $h_i$ in the expression of the highest root $\theta$ of $\Phi$. Clearly, $\gamma(\Phi) = \gamma(\Phi^\vee)$. By Theorem \ref{Thm-max-closed}, $\gamma$ is the minimal positive integer such that $\gamma Q \subset \bbZ \Psi$ for any maximal closed root subsystem $\Psi\subset \Phi$ of full rank. 
	
	The following result provides a characterization of maximal root subsystems. 
	\begin{proposition}\label{maxroot-characterizations}
		Let $\bbF = \bbC(q^{\frac{1}{\gamma}})$. Then any maximal root subsystem $\Psi$ of $\Phi$ is of the form $\Phi_\Lambda$ for some $\Lambda\in \rmT_\bbF$. In particular, if $\Psi^\vee$ is closed in $\Phi^\vee$, then $\Psi = \Phi_\lambda$ for some $\lambda\in \frac{1}{\gamma}P$. 
	\end{proposition}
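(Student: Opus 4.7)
The plan is to split along Lemma~\ref{Lem-maxproper}, treating separately whether $\Psi$ or $\Psi^\vee$ is closed.

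\textbf{The dual-closed case.} Here I construct a linear weight $\lambda\in \tfrac{1}{\gamma}P$ with $\Phi_\lambda=\Psi$, which simultaneously proves the second assertion. By maximality of $\Psi$, the subsystem $\Psi^\vee$ is a maximal closed root subsystem of $\Phi^\vee$. Applying Borel--de Siebenthal (Theorem~\ref{Thm-max-closed}) to $\Phi^\vee$ supplies, up to $W$-conjugacy, a basis for $\Psi^\vee$ of one of two forms: $\{\alpha_1^\vee,\dots,\hat{\alpha}_i^\vee,\dots,\alpha_n^\vee\}$ with $h_i^\vee=1$, or $\{-\psi^\vee,\alpha_1^\vee,\dots,\hat{\alpha}_i^\vee,\dots,\alpha_n^\vee\}$ with $h_i^\vee$ prime. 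I take $\lambda=\tfrac{1}{m}\omega_i$ with $m=2$ in the first case and $m=h_i^\vee$ in the second, so $\lambda\in\tfrac{1}{\gamma}P$. For any coroot $\alpha^\vee=\sum_j c_j\alpha_j^\vee$, one computes $\inprod{\lambda}{\alpha}=c_i/m$, and the Borel--de Siebenthal description of $\Psi^\vee$, together with the elementary bound $|c_i|\leqslant h_i^\vee$, immediately yields $\Phi_\lambda=\Psi$.

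\textbf{The closed but not dual-closed case.} Lemma~\ref{dualsys} excludes linear weights, so I build a non-linear $\Lambda\in \rmT_{\bbC(q)}\subset \rmT_\bbF$ following the template of Example~\ref{Notclosed}. Since $\Psi$ is a maximal closed subsystem of $\Phi$, Borel--de Siebenthal supplies a basis, and inspection of Table~\ref{Tablemaxrs} shows that the relevant pairs $(\Phi,\Psi)$ are essentially $D_n^L\subset B_n$ and $A_2^L\subset G_2$; in each of these $\Psi$ is distinguished inside $\Phi$ by a divisibility condition on the coefficient $c_j(\alpha)$ of a single distinguished simple root $\alpha_j$ — namely, $\alpha\in\Psi$ iff $p\mid c_j(\alpha)$, with $p=2$ for $B_n$ (taking $\alpha_j$ to be the short simple root) and $p=3$ for $G_2$ (with $\alpha_j$ short). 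I set
\[
\Lambda(K_{\alpha_i})=1\ (i\neq j),\qquad \Lambda(K_{\alpha_j})=\zeta,
\]
where $\zeta\in\bbC$ is a primitive $4$th root of unity when $p=2$ and a primitive $p$th root of unity when $p$ is odd. Then $\Lambda(K_{2\alpha})=\zeta^{2c_j(\alpha)}$ lies in $q^{\bbZ(\alpha,\alpha)}$ exactly when $p\mid c_j(\alpha)$, yielding $\Phi_\Lambda=\Psi$ (with Example~\ref{Notclosed} recovered as the case $n=2$, $p=2$, $\zeta=\imath$).

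The principal obstacle is the second case: a linear weight demonstrably fails by Lemma~\ref{dualsys}, so a genuine root-of-unity twist is needed, and one must verify in each remaining pair $(\Phi,\Psi)$ from Table~\ref{Tablemaxrs} that the divisibility condition distinguishing $\Psi$ inside $\Phi$ is captured by the chosen $p$ and $\alpha_j$. The list is short, and the parity/divisibility calculation in Example~\ref{Notclosed} generalizes transparently; the main care is in confirming, type by type, that the chosen twist neither integralizes any unwanted root nor fails to integralize a desired one.
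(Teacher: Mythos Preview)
Your case split along Lemma~\ref{Lem-maxproper} matches the paper's, and in the dual-closed case your explicit $\lambda=\tfrac{1}{m}\omega_i$ is a clean alternative to the paper's more abstract lattice argument (the paper shows $P\subsetneq\Hom_\bbZ(\bbZ\Psi^\vee,\bbZ)\subset\tfrac{1}{\gamma}P$ and picks any $\lambda$ in the difference, then invokes maximality of $\Psi$ to force $\Phi_\lambda=\Psi$ rather than verifying equality directly).

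There is, however, a gap in your closed-but-not-dual-closed enumeration: you list $D_n^L\subset B_n$ and $A_2^L\subset G_2$ but omit $B_4\subset F_4$. The paper itself singles out precisely the three pairs $(B_n,D_n^L)$, $(F_4,B_4)$, $(G_2,A_2^L)$ in Example~\ref{TypeB-Const} as those not realizable over $\bbQ(q^{1/\gamma})$, and these are exactly the closed-but-not-dual-closed maximal subsystems. Your construction does extend to the missing case --- in Bourbaki's labelling of $F_4$ one has $\alpha\in B_4$ iff $2\mid c_4(\alpha)$, so take $p=2$, $\alpha_j=\alpha_4$, $\zeta=\imath$ --- but the case must actually be included for the argument to be complete, and ``inspection of Table~\ref{Tablemaxrs}'' does not by itself tell you which entries fail dual-closedness.

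The paper sidesteps this enumeration entirely: rather than isolating the not-dual-closed subsystems, it handles \emph{all} closed maximal $\Psi$ of rank $n$ uniformly. Any such $\Psi$ is $W$-conjugate to one with base $\{-\theta,\alpha_1,\dots,\hat\alpha_i,\dots,\alpha_n\}$ for prime $h_i$ (Theorem~\ref{Thm-max-closed}(2)), and the paper sets $\Lambda(K_{\alpha_i})=\varepsilon q^{d_i}$ with $\varepsilon$ a primitive $2h_i$-th root of unity and $\Lambda(K_{\alpha_j})=1$ for $j\neq i$; closedness gives $\Psi=\bbZ\Psi\cap\Phi=\Phi_\Lambda$. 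This uniform approach is less fragile than a type-by-type inspection, and the overlap with the dual-closed case is harmless.
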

	\begin{proof}
		Suppose that $\Psi$ is a maximal root subsystem of $\Phi$. By Lemma \ref{Lem-maxproper}, either $\Psi$ is closed in $\Phi$, or $\Psi^\vee$ is closed in $\Phi^\vee$. For the latter case, it suffices to find $\lambda\in \frac{1}{\gamma}P$ such that 
		\begin{equation}\label{inclusions}
			\Psi \subset \Phi_\lambda\subsetneq \Phi.
		\end{equation}
		Since $\Psi^\vee$ is closed in $\Phi^\vee$, the proposition in \cite[Section 12-3]{Kan01} gives a proper inclusion of $\bbZ$-lattices 
		\[Q^\prime :=\bbZ\Psi^\vee \subsetneq \bbZ\Phi^\vee = Q^\vee. \]
		If $\mathrm{rank}\;Q^\prime = \mathrm{rank}\;Q^\vee$, then by Theorem \ref{Thm-max-closed}, the quotient $Q^\vee/Q^\prime$ is torsion and $\gamma Q^\vee \subset Q^\prime$. Their corresponding dual lattices satisfy that 
		$\Hom_\bbZ(Q^\vee, \bbZ) \subsetneq \Hom_\bbZ(Q^\prime, \bbZ).$
		Since the bilinear form $(\cdot,\cdot)$ is nondegenerate on $\frakh_\bbQ^*$, each homomorphism in $\Hom_\bbZ(Q^\prime, \bbZ)$ corresponds uniquely to an element $\lambda$ in $\frac{1}{\gamma}P$, while  $\Hom_\bbZ(Q^\vee, \bbZ)$ identifies with $P$ under this form, that is, 
		\[ 	P=\Hom_\bbZ(Q^\vee, \bbZ) \subsetneq \Hom_\bbZ(Q^\prime, \bbZ) \subset \frac{1}{\gamma}P.  \]
		Such $\lambda$ is exactly the desired weight satisfying ~\eqref{inclusions} as long as $\lambda\notin P$. If $\mathrm{rank}\;Q^\prime < \mathrm{rank}\;Q^\vee$, then the orthogonal complement 
		\[(Q^\prime)^\perp = \{\lambda\in \frakh_\bbQ^* \mid (\lambda, \mu) = 0, \forall \mu \in Q^\prime \} \]
		is nonzero, and any $\lambda\in (Q^\prime)^\perp\cap (\frac{1}{\gamma}P\setminus P) $  satisfies ~\eqref{inclusions}. Then the assertion for this case is proved by Proposition \ref{int-root-sys}.
		For the former case, we may assume $\mathrm{rank}\;\Psi =n$ since any maximal root subsystem of rank $n-1$ has to be dual-closed (see Table \ref{Tablemaxrs}). Using the fact that $w\Phi_\Lambda = \Phi_{w\Lambda}$ for any $w\in W$ and $\Lambda\in \rmT_\bbF$, it suffices to show that any root subsystem $\Psi$ of type (2) in Theorem \ref{Thm-max-closed} is of form $\Phi_\Lambda$ for some $\Lambda\in \rmT_\bbF$. Assume that $\Psi$ has a base 
		\[\{-\theta, \alpha_1, \cdots, \hat{\alpha}_i, \cdots, \alpha_n \}, \]
		where $h_i$ is prime, and fix a primitive $2h_i$-th root of unity $\varepsilon$. Choose one $\Lambda$ in $ \rmT_\bbF$ (also in $\rmT_q$) as follows:
		\[\Lambda(K_i) = \varepsilon q^{d_i}, \quad \Lambda(K_j) = 1, \quad \text{for all } j\neq i.\]
		One checks that $2h_id_i$ is always divisible by $(\theta, \theta)$, and since $\Psi$ is closed, $\Psi = \bbZ\Psi \cap \Phi$. Hence $\Psi = \Phi_\Lambda$. 
	\end{proof}
	
	From the above proof, every maximal root subsystem of $\Phi$ can be realized as $\Phi_\Lambda$ for some $\Lambda\in \rmT_\bfk$. On the other hand, in general a root subsystem of $\Phi$ need not be of the form $\Phi_\Lambda$, as noted in Remark \ref{Rmk-counterexample}.

	\subsection{Minimal GK-dimensions} 
	Let $\frakg$ be a simple Lie algebra throughout this subsection. Our aim is to determine the minimal GK-dimensions of irreducible objects in $\catO_{q,\bbF}$. We proceed by a comparative analysis of three cases, according to whether the highest weight lies in one of the following subsets of $\rmT_{\bbF}$: $\rmT_{\bbF} \setminus \calP$, the set of linear but non-integral weights, and $\calP \setminus \calP^+$.
	\vspace{.3cm}

	Let $\Psi_{max}$ denote a proper root subsystem of $\Phi$ whose cardinality $|\Psi_{max}|$ is maximal among all proper subsystems of $\Phi$. Then $\Psi_{max}$ is necessarily a maximal root subsystem. The possible types of $\Psi_{max}$ are listed in Table \ref{Tablemaxrs}.  Define $\Psi_{max}^+ = \Phi^+\cap \Psi_{max}$. Set $\kappa_0= |\Phi^+\setminus\Psi_{max}^+|$.  Then we have
	
	\begin{proposition}\label{nonlinearGK}
		Let $\Lambda \in \rmT_\bbF\setminus \calP$.    Then  $d\big(L_q(\Lambda)\big) = \kappa_0$
		if and only if 
		\begin{enumerate}[{\rm (1)}]
			\item $\Lambda$ is dominant regular with respect to the equivalence class $\sim$, and
			\item the associated proper subsystem $\Phi_\Lambda$ has maximal cardinality among all proper subsystems. 
		\end{enumerate}
		
	\end{proposition}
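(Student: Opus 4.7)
The plan is to extract the equivalence directly from the GK-dimension formula of Theorem \ref{GKdimension}. Writing $w$ for the unique minimal-length element of $W_\Lambda$ with $\check{w}^{-1}\cdot\Lambda$ antidominant in $\overline{\Lambda}$, one has $d(L_q(\Lambda)) = |\Phi^+| - \afun_\Lambda(w)$, so the identity $d(L_q(\Lambda)) = \kappa_0 = |\Phi^+| - |\Psi_{max}^+|$ reduces to the equation $\afun_\Lambda(w) = |\Psi_{max}^+|$. My approach is to sandwich this between two natural upper bounds and then identify the equality case in each.

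The first bound is $|\Phi_\Lambda^+| \leq |\Psi_{max}^+|$, which comes from the hypothesis $\Lambda \notin \calP$. Indeed, $\Phi_\Lambda = \Phi$ would force $\Lambda(K_{\alpha_i})^2 \in q^{2 d_i \bbZ}$, hence $\Lambda(K_{\alpha_i}) \in \{\pm 1\}\cdot q^{d_i \bbZ}$ for every simple root $\alpha_i$; using $(\omega_j, \alpha_i) = d_i \delta_{ij}$, this is equivalent to $\Lambda \in \bbZ_2^n \rtimes \{q^\lambda \mid \lambda \in P\} = \calP$, contradicting the assumption. Hence $\Phi_\Lambda$ is a proper subsystem and the bound follows from the definition of $\Psi_{max}$. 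The second bound is $\afun_\Lambda(w) \leq |\Phi_\Lambda^+|$, with equality if and only if $w$ is the longest element, say $w_0^\Lambda$, of $(W_\Lambda, S_\Lambda)$. This comes from Lemma \ref{afunprop}(1), (4) applied to the finite Coxeter system $(W_\Lambda, S_\Lambda)$, since $\afun_\Lambda(w_0^\Lambda) = |\Phi_\Lambda^+|$ whereas every other element lies in a two-sided cell on which $\afun_\Lambda$ takes a strictly smaller value.

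Combining the bounds, the equation $\afun_\Lambda(w) = |\Psi_{max}^+|$ is equivalent to the conjunction of $|\Phi_\Lambda^+| = |\Psi_{max}^+|$ (which is condition (2)) and $w = w_0^\Lambda$. It remains to identify the latter with the statement that $\Lambda$ is dominant regular. If $\Lambda$ is dominant regular, the triviality of $\Stab(\Lambda)$ makes the orbit map $v \mapsto \check{v}\cdot\Lambda_0$ a bijection $W_\Lambda \to \overline{\Lambda}$, where $\Lambda_0$ denotes the unique antidominant weight in the orbit; since $\check{w}_0^\Lambda\cdot\Lambda_0$ is dominant, the unique dominant weight equals $\Lambda$, whence $w = w_0^\Lambda$. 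Conversely, $w = w_0^\Lambda$ means that $(\check{w}_0^\Lambda)^{-1}\cdot\Lambda$ is antidominant, so $\Lambda = \check{w}_0^\Lambda\cdot\Lambda_0$ is dominant; and the uniqueness clause defining $w$ in Theorem \ref{GKdimension}, combined with Lemma \ref{uniq2reg}, yields regularity of $\Lambda$.

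The main (though mild) obstacle I anticipate is verifying the strict inequality $\afun_\Lambda(w) < |\Phi_\Lambda^+|$ for every $w \in W_\Lambda\setminus\{w_0^\Lambda\}$; once this is granted, the proof is a direct matching of two upper bounds together with an orbit-stabilizer analysis on the linkage class $\overline{\Lambda}$, each step following immediately from the structural results already established in Sections \ref{IntegralWeyl&subsys} and \ref{minGK}.
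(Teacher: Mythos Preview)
Your proof is correct and follows essentially the same route as the paper's: reduce via Theorem~\ref{GKdimension} to the equation $\afun_\Lambda(w)=|\Psi_{max}^+|$, bound $\afun_\Lambda(w)\leq |\Phi_\Lambda^+|\leq |\Psi_{max}^+|$ using $\Lambda\notin\calP$ and Lemma~\ref{afunprop}, and then identify the equality case with $w=w_0^\Lambda$ together with Lemma~\ref{uniq2reg}. The mild concern you flag---the strict inequality $\afun_\Lambda(w)<|\Phi_\Lambda^+|$ for $w\neq w_0^\Lambda$---is indeed covered by Lemma~\ref{afunprop}(1),(4),(5) applied componentwise if $W_\Lambda$ is reducible.
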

	\begin{proof}
		Let $w\in W_\Lambda$ be the unique element of minimal length such that $\check{w}\cdot \Lambda$ is antidominant in  $\overline{\Lambda}$. If $\Lambda$ is dominant regular, then $w$ is the longest element in $W_\Lambda$. In this case, $\afun_\Lambda(w)$ equals $|\Phi_\Lambda^+|$ by Lemma \ref{afunprop}, which proves the ``if'' part. For the ``only if'' part, since $\Lambda\notin \calP$, $\Phi_\Lambda$ is proper in $\Phi$, and hence we have 
		\[2\afun_\Lambda(w) \leqslant |\Phi_\Lambda| \leqslant |\Phi|-2\kappa_0. \]
		If $d(L_q(\Lambda)) = \kappa_0$, then $\afun_\Lambda(w) = |\Phi^+| - \kappa_0$ and $|\Phi_\Lambda| =|\Phi|-2\kappa_0$. Thus $\Phi_\Lambda$ is a proper subsystem of $\Phi$ of maximal cardinality, and $w$ must be longest (thus unique) in the Coxeter system $(W_\Lambda,S_\Lambda)$. By Lemma \ref{uniq2reg}, it follows that $\Lambda$ is dominant regular. 
	\end{proof}

	For a field $\bbF$ with $\bbQ(q)\subset \bbF \subset \bfk$, it is not always true that there exists $\Lambda \in \rmT_\bbF \setminus \calP$ such that the associated proper subsystem $\Phi_\Lambda$ coincides with some  $\Psi_{max}$ of a prescribed type. For instance, consider $\bbF = \bbQ(q^{\frac{1}{\gamma}})$. In this situation, no  $\Phi_\Lambda$ for $\Lambda\in \rmT_{\bbF}$ can be realized in the following cases:
	\[(\Phi, \Psi_{max}) : \quad (B_n, D_n^L), \quad (F_4, B_4), \quad (G_2, A_2^L).\]
	To illustrate, we take the case $(B_n, D_n^L)$ as an example; the other cases follow similarly. 
	\begin{example}\label{TypeB-Const}
		Let $\Phi$ be a root system of type $B_n$. To be consistent with our convention, let $\Phi$ consist of the vectors $\pm \epsilon_i$ (of squared length $2$) and $\pm (\epsilon_i\pm \epsilon_j)$ (of squared length $4$), $1\leqslant i\neq j\leqslant n$, where the $\epsilon_i'$s form an orthogonal basis of $\frakh^*$ with $(\epsilon_i, \epsilon_i)=2$. Take $\Psi_{max}=\{\pm(\epsilon_i\pm\epsilon_j)\mid 1\leqslant i\neq j\leqslant n \}$, which is of type $D_n$ with a base $\{\epsilon_1-\epsilon_2, \cdots, \epsilon_{n-1}-\epsilon_n, \epsilon_{n-1}+\epsilon_n \}$, and consider $\bbF = \bbQ(q^{\frac{1}{\gamma}})$. If $  \Psi_{max}=\Phi_\Lambda$ for some $\Lambda\in \rmT_{\bbF}$, then we would have 
		\[\Lambda(K_{2(\epsilon_{n-1}-\epsilon_n)}) \in q^{4\bbZ} \quad \text{and}\quad  \Lambda(K_{2(\epsilon_{n-1}+\epsilon_n)})\in q^{4\bbZ}, \quad \text{but}\quad \Lambda(K_{2\epsilon_n})\notin q^{2\bbZ}.  \]
		This forces the value $\Lambda(K_{\epsilon_n})$ to be of the form $\imath q^m$ or $-\imath q^m$ for some $m\in \bbZ$, contradicting the assumption that $\Lambda\in \rmT_{\bbF}$. \qed 
	\end{example}
	
	When $\Lambda$ ranges over all linear (or integral) weights, the minimal GK-dimensions are already well established, as they can be deduced from the corresponding classical results via the classical-quantum equivalence (see \cite[Theorem 4.2]{EK08}). Now consider the following subsets of $\rmT_\bfk$: 
	\[ X_1=\bbZ_2^n\rtimes\{q^\lambda\mid \lambda\in \frac{1}{\gamma}P \}\setminus\calP^+,\qquad X_2 = \calP\setminus\calP^+,  \]
	and define
	\[
	\kappa_i= \min\{\, d(L_q(\Lambda)) \mid \Lambda \in X_i \,\}, \quad \text{for } i=1, 2.
	\]
	Since $X_2\subset X_1$, it follows that $\kappa_1\leqslant\kappa_2$. 
	Moreover, we have the following result. 
	\begin{lemma}\label{linearGKdim}
		Let $\theta_s$ denote the highest short root in $\Phi$ (which coincides with the highest root $\theta$ in the simply-laced types). Then 
		\[\kappa_1= \inprod{\rho}{\theta}, \quad \kappa_2 = \inprod{\rho}{\theta_s}. \]
	\end{lemma}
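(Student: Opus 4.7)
My plan is to apply Theorem \ref{GKdimension} uniformly and, in each case, reduce both equalities to the problem of maximizing Lusztig's function $\afun_\Lambda$ over the appropriate Weyl group.

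For $\kappa_2$, take $\Lambda\in \calP\setminus \calP^+$, so $W_\Lambda = W$ and $L_q(\Lambda)$ is infinite-dimensional. Hence the minimal length $w\in W$ with $\check w^{-1}\cdot\Lambda$ antidominant satisfies $w\neq w_0$, and Lemma \ref{afunprop}(4) gives $\afun(w)\leqslant \afun(w_0\scrC) = l(w_0) - \inprod{\rho}{\theta_s}$, so $d(L_q(\Lambda)) \geqslant \inprod{\rho}{\theta_s}$. To realize equality I would take $\Lambda = q^{-\omega_i}$ for some $i\in I$: the weight $\lambda+\rho = \rho-\omega_i$ is dominant with stabilizer $\langle s_i\rangle$, so the minimal-length representative is $w_0s_i\in w_0\scrC$, and hence $d(L_q(\Lambda)) = \inprod{\rho}{\theta_s}$.

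For $\kappa_1$, the inclusion $X_2\subset X_1$ immediately gives $\kappa_1 \leqslant \kappa_2 = \inprod{\rho}{\theta_s}$; the sharper equality $\kappa_1 = \inprod{\rho}{\theta}$ is the main content. I would split $X_1 = X_2 \sqcup (X_1\setminus\calP)$: contributions from $X_2$ are bounded below by $\kappa_2 = \inprod{\rho}{\theta_s}\geqslant \inprod{\rho}{\theta}$. For $\Lambda = \sigma q^\lambda \in X_1\setminus\calP$ one has $\lambda\in \frac{1}{\gamma}P\setminus P$ and $\Phi_\Lambda = \Phi_\lambda\subsetneq \Phi$ is a proper root subsystem whose coroot system is closed (Lemma \ref{dualsys}); since $\afun_\Lambda$ on $W_\Lambda$ is bounded by $|\Phi_\Lambda^+|$, attained at the longest element exactly when $\Lambda$ is dominant regular, Theorem \ref{GKdimension} yields $d(L_q(\Lambda)) \geqslant |\Phi^+| - |\Phi_\Lambda^+|$. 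For the matching upper bound I would exhibit, in each non-simply-laced type, an explicit $\lambda\in \frac{1}{\gamma}P\setminus P$ for which $\Phi_\lambda$ is a proper subsystem with closed dual of maximal cardinality and $\lambda+\rho$ is strictly dominant relative to $\Phi_\lambda^+$. Natural choices, guided by the constructive part of Proposition \ref{maxroot-characterizations}, are $\lambda = \frac{1}{2}\omega_1$ in type $B_n$ (giving $\Phi_\lambda = B_1\times B_{n-1}$), $\lambda = \frac{1}{2}\sum_{i=1}^n\epsilon_i$ in type $C_n$ (giving $D_n^S$), together with analogous rational weights in types $F_4$ and $G_2$ yielding $C_4$ and $A_2^S$, respectively. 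For such $\Lambda$, Theorem \ref{GKdimension} forces $d(L_q(\Lambda)) = |\Phi^+| - |\Phi_\lambda^+|$, which equals $\inprod{\rho}{\theta}$ by direct computation.

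The main technical obstacle is the identity $|\Phi^+| - \max_{\Psi}|\Psi^+| = \inprod{\rho}{\theta}$, where $\Psi$ ranges over proper root subsystems of $\Phi$ with $\Psi^\vee$ closed. This is verified type by type using Table \ref{Tablemaxrs} together with Lemma \ref{Lem-maxproper}: in the non-simply-laced types the maximum is attained precisely by $B_1\times B_{n-1},\ D_n^S,\ C_4,\ A_2^S$ for $B_n,\ C_n,\ F_4,\ G_2$ respectively, while the candidates $D_n^L\subset B_n$, $B_4\subset F_4$ and $A_2^L\subset G_2$, though closed, have non-closed coroot systems (cf.~Example \ref{TypeB-Const}) and so cannot arise as $\Phi_\lambda$. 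In the simply-laced types one has $\inprod{\rho}{\theta} = \inprod{\rho}{\theta_s}$, so the bound from $X_2$ is already tight and no further work on $X_1\setminus\calP$ is needed.
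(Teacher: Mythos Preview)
Your strategy is essentially the paper's: the one-line proof there invokes precisely Lemma~\ref{afunprop}(5) (for $\kappa_2$) and Proposition~\ref{maxroot-characterizations} (for $\kappa_1$), and you are filling in the details behind that invocation. Your witness $q^{-\omega_i}$ for $\kappa_2$, the splitting $X_1=X_2\sqcup(X_1\setminus\calP)$, the reduction via Lemma~\ref{dualsys} to dual-closed subsystems, and the explicit realizations in types $B,C,F,G$ are all correct.

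There is one imprecision worth flagging. You state the ``main technical obstacle'' as the \emph{identity} $|\Phi^+|-\max_\Psi|\Psi^+|=\inprod{\rho}{\theta}$ over proper dual-closed $\Psi$, but this is false in types $D_n$ and $E_{6,7,8}$: there every root subsystem is dual-closed, so the left side is $\kappa_0$, which by Table~\ref{MinGK} strictly exceeds $\kappa_1=\inprod{\rho}{\theta}$ (e.g.\ $2n-2>2n-3$ in $D_n$). What you actually need in the simply-laced case is only the \emph{inequality} $|\Phi^+|-\max_\Psi|\Psi^+|\geqslant\inprod{\rho}{\theta}$, so that the contribution from $X_1\setminus\calP$ does not undercut the value $\inprod{\rho}{\theta_s}=\inprod{\rho}{\theta}$ already attained on $X_2$. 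Your phrase ``no further work on $X_1\setminus\calP$ is needed'' is therefore slightly misleading: a one-line check (Table~\ref{MinGK} gives $\kappa_0\geqslant\kappa_1$ in every simply-laced type) is still required. Once you restate the obstacle as an equality only in the non-simply-laced types (and type $A$, where it happens to hold) and as an inequality elsewhere, the argument is complete.
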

	\begin{proof}
		This is a direct proof from Lemma  \ref{afunprop}(5) and Proposition \ref{maxroot-characterizations}. 
	\end{proof}
	
	\begin{remark}
		The classical results about minimal GK-dimension can be found in \cite[Table 1]{Jos74} with corrections for  $E_{7,8}$ in \cite[Section3]{Jos76} and \cite[Table 1]{BMXX}.
	\end{remark}
	
	Let $h$ and $h^\vee$ denote the \emph{Coxeter number} and \emph{dual Coxeter number} of $\frakg$,  respectively (cf. \cite[Chapter 6]{Kac90}). By Lemma \ref{linearGKdim}, we have $\kappa_1 = h^\vee - 1$ (see also \cite{Wan99}) and $\kappa_2 = h-1$. Combining Proposition \ref{maxroot-characterizations}, Proposition \ref{nonlinearGK} and Lemma \ref{linearGKdim} we obtain the following result. 
	\begin{theorem}\label{q-minGK}
		For a field $\bbF $ with $ \bbC(q^{\frac{1}{\gamma}})\subset \bbF \subset \bfk$, we have 
		\[ 	\min\{\, d\left(L_q(\Lambda)\right) \mid \Lambda \in \rmT_{\bbF}\setminus\calP^+  \,\} = \min\{\kappa_0, \kappa_1 \}.  \]
		The explicit values of $\kappa_i$ are listed in Table \ref{MinGK}, where $\min\{\kappa_0, \kappa_1\}$ is highlighted in red for each type. \qed 
		
		{ \small \renewcommand{\arraystretch}{1.2}%
			\begin{table}[h]
				\centering
				\begin{tabular}{cccccccccc}
					\toprule
					$\mathsf{Type}$ & $A_n $	& 		$B_n $	& 	$C_n $ & 	$D_n $	& 	$E_6$	&  	$E_7$	&	$E_8$	&	$F_4$	& 	$G_2$	\\
					\hline
					
					$\kappa_0$ & \cellcolor{red!25}{$n$} &\cellcolor{red!25}{$n$}&\cellcolor{red!25}{ $n$} &$2n-2$ &$16$ &$27 $& $56$ &\cellcolor{red!25}$8$ &\cellcolor{red!25}$3$ \\ $\kappa_1$ & \cellcolor{red!25}{$n$} &$2n-2$& \cellcolor{red!25}{$n$} &\cellcolor{red!25}{$2n-3$} &\cellcolor{red!25}{$11$} &\cellcolor{red!25}$17 $&\cellcolor{red!25} $29$ &\cellcolor{red!25}$8$ &\cellcolor{red!25}$3$\\
					$\kappa_2$ & \cellcolor{red!25} $n$ &$2n-1$& $2n-1$ &\cellcolor{red!25}$2n-3$ &\cellcolor{red!25}$11$ &\cellcolor{red!25}$17 $&\cellcolor{red!25} $29$ &$11$ &$5$  \\
					\bottomrule
				\end{tabular}
				\vspace{.1cm}
				\caption{Minimal GK-dimensions over $\bbC(q^{\frac{1}{\gamma}})$}
				\label{MinGK}
		\end{table}}
	\end{theorem}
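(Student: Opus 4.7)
The proof proceeds by partitioning $\rmT_\bbF\setminus\calP^+$ into the non-integral piece $\rmT_\bbF\setminus\calP$ and the integral non-dominant piece $\calP\setminus\calP^+$, and bounding $d(L_q(\Lambda))$ separately on each. The lower bound on $\rmT_\bbF\setminus\calP$ is given by Proposition~\ref{nonlinearGK}: the chain $2\afun_\Lambda(w)\le |\Phi_\Lambda|\le |\Phi|-2\kappa_0$ (using Lemma~\ref{afunprop} and the fact that $|\Psi_{max}|$ is maximal among proper subsystems) yields $d(L_q(\Lambda))\ge\kappa_0$. For $\Lambda\in\calP\setminus\calP^+=X_2$, by definition $d(L_q(\Lambda))\ge\kappa_2$, and since $X_2\subseteq X_1$ one has $\kappa_2\ge\kappa_1$ by Lemma~\ref{linearGKdim}. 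Combined, $d(L_q(\Lambda))\ge\min\{\kappa_0,\kappa_1\}$ throughout $\rmT_\bbF\setminus\calP^+$.

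The upper bound is witnessed by two concrete weights. For $\kappa_0$, Proposition~\ref{maxroot-characterizations} supplies $\Lambda_0\in\rmT_{\bbC(q^{1/\gamma})}\subseteq\rmT_\bbF$ with $\Phi_{\Lambda_0}=\Psi_{max}$; the hypothesis $\bbF\supseteq\bbC(q^{1/\gamma})$ is indispensable here, as illustrated by Example~\ref{TypeB-Const} for the pair $(B_n,D_n^L)$. Multiplying $\Lambda_0$ by a suitable $q^\mu$, $\mu\in P$, produces a weight $\Lambda$ that is dominant regular with respect to $\sim$; by Corollary~\ref{int-Weyl}(1) this translation preserves $\Phi_\Lambda=\Psi_{max}$, and Proposition~\ref{nonlinearGK} then gives $d(L_q(\Lambda))=\kappa_0$. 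Since $\Phi_\Lambda\neq\Phi$, $\Lambda\in\rmT_\bbF\setminus\calP\subseteq\rmT_\bbF\setminus\calP^+$. For $\kappa_1$, Lemma~\ref{linearGKdim} directly furnishes $\Lambda\in X_1\subseteq\rmT_\bbF\setminus\calP^+$ attaining $d(L_q(\Lambda))=\kappa_1$. These two realizations combined with the lower bound yield $\min_{\Lambda\in\rmT_\bbF\setminus\calP^+}d(L_q(\Lambda))=\min\{\kappa_0,\kappa_1\}$.

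The main technical subtlety lies in the existence of the dominant regular translate in the $\kappa_0$ step: this is a generic condition on $\mu\in P$, since $\Stab(\Lambda)$ is finite and dominance amounts to finitely many strict inequalities, so a generic choice of $\mu$ works without altering the field of definition. The explicit table values then follow from $\kappa_0=|\Phi^+|-|\Psi_{max}^+|$ via Table~\ref{Tablemaxrs}, together with the identities $\kappa_1=\langle\rho,\theta\rangle=h^\vee-1$ and $\kappa_2=\langle\rho,\theta_s\rangle=h-1$, by a direct case-by-case computation in each irreducible Lie type.
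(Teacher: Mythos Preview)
Your proposal is correct and follows essentially the same approach as the paper, which simply states that the result is obtained by combining Proposition~\ref{maxroot-characterizations}, Proposition~\ref{nonlinearGK}, and Lemma~\ref{linearGKdim}. You have spelled out precisely how these ingredients fit together---the partition of $\rmT_\bbF\setminus\calP^+$ into non-integral and integral pieces, the separate lower bounds $\kappa_0$ and $\kappa_2\ge\kappa_1$, and the realizability of both $\kappa_0$ and $\kappa_1$---which is exactly the intended argument.
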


	From Table \ref{MinGK}, we see that, except for  type $ B_n (n\geqslant 3)$, the minimal values coincide with those in the classical case, i.e., $\kappa_1$. By the previous statement (see Example \ref{TypeB-Const}), the minimal GK-dimension for each type over the field $\bbQ(q^{\frac{1}{\gamma}})$ is also $\kappa_1$.

	\section{Application}\label{App}
	In this section, we apply the results in the previous section to address the existence problem for quantum cuspidal weight  modules, which is central to the classification of simple weight modules via parabolic induction.

	Recall that a simple weight $\frakg$-module $L$ (with $\dim L_\lambda < \infty$ for any $\lambda \in \mathfrak{h}^*$) which cannot be induced from a  module over any proper parabolic subalgebra of $\frakg$ is called \emph{cuspidal}. It is known that such modules exist only when $\mathfrak{g}$ consists of simple components of type $A$ or type $C$.
	Further properties of cuspidal modules can be found in \cite{Fer90}. In brief, they are infinite-dimensional, their weight spaces have uniformly bounded dimensions, and every root vector of $\frakg$ acts injectively on them (consequently, all the weight spaces have the same dimension).
	Let $\check{U}_q := U_q\otimes_{\bbC}\bfk$. Now let $M_q$ be an infinite-dimensional irreducible weight $\check{U}_q$-module. Suppose that there exists a positive integer $d$ such that
	\[ \dim_\bfk (M_q)_\Lambda = d, \quad \text{for all } \Lambda \in \Omega(M_q). \]
	Clearly, $M_q$ is finitely generated, and any nonzero weight space can serve as a generating space of $M_q$. For each $\beta \in Q$, let $(\check{U}_q)_\beta$ be the $\beta$-weight space of $\check{U}_q$ with respect to the adjoint action of the Cartan part $\check{U}_q^0$. For any monomial $u = F^{\underline{k}}K_\mu E^{\underline{r}}\in \check{U}_q$, define the height of this monomial by 
	\[\mathrm{ht}(u):= \sum_i(k_i+r_i)\mathrm{ht}(\beta_i), \]
	where $\mathrm{ht}(\beta_i)$ is the height of positive root $\beta_i$ with respect to the simple roots in $\Pi$, and denote by $\check{U}_q^{(m)}$ the $\bfk$-linear span of the monomials $u\in \check{U}_q$ such that $\mathrm{ht}(u)\leqslant m$.  Then we get a filtration $\{\check{U}_q^{(m)} \}_{m\geqslant 0}$ for  $\check{U}_q$, and $\check{U}_q^{(1)}$ is a generating space of $\check{U}_q$. 
	Fix one nonzero weight space $(M_q)_{\Lambda_0}$ of $M_q$. Set $r = \max\{(\rho, \alpha^\vee)\mid \alpha\in \Phi \}$. Then we have 
	\[\sum_{\substack{|(\rho, \beta)|\leq m \\ \beta\in Q}}(\check{U}_q)_\beta (M_q)_{\Lambda_0} \subset \check{U}_q^{(m)}(M_q)_{\Lambda_0}\subset \sum_{\substack{|(\rho, \beta)|\leq mr \\ \beta\in Q}}(\check{U}_q)_\beta (M_q)_{\Lambda_0}, \]
	for all $m\in \bbN$. Hence, 
	\[d(2m+1)^n \leqslant \dim_{\bfk} \check{U}_q^{(m)}(M_q)_{\Lambda_0} \leqslant d(2mr+1)^n. \]
	This implies that $\lim_{m\rightarrow\infty}\log_m \dim_\bfk \check{U}_q^{(m)} (M_q)_{\Lambda_0} = n$ and thus, the GK-dimension $d(M_q)$ is exactly the rank of $\frakg$. Moreover, we have the following result. 
	
	\begin{theorem}\label{Existence}
		The algebra $\check{U}_q$ admits cuspidal modules if and only if  the underlying semisimple Lie algebra $\frakg$ consists of simple components of type $A$, $B$ or $C$. 
	\end{theorem}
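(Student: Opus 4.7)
The plan is to combine the GK-dimension computation immediately preceding the theorem with the classification of minimal GK-dimensions established in Theorem \ref{q-minGK}, via a quantum analogue of Mathieu's theorem on coherent families of weight modules.

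First I would reduce to the case where $\frakg$ is simple, since a cuspidal module for a semisimple Lie algebra decomposes as an outer tensor product of cuspidal modules over the simple factors. For the ``only if'' direction, assume $M_q$ is an irreducible cuspidal weight $\check{U}_q$-module. By the quantum analogue of Fernando's structure theorem (as invoked in the paragraph before the statement), the weight multiplicities of $M_q$ are uniformly bounded, so the computation preceding the theorem gives $d(M_q) = n$ where $n = \mathrm{rank}(\frakg)$. Next I would invoke the quantum Mathieu construction: since every root vector acts injectively on $M_q$, the Ore localization of $\check{U}_q$ at the multiplicative set generated by a suitable root vector acts on $M_q$, producing a semisimple coherent family all of whose members share the same GK-dimension. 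A standard extraction argument then yields a simple highest weight module $L_q(\Lambda) \in \catO_{q,\bfk}$ with $d(L_q(\Lambda)) = d(M_q) = n$. Applying Theorem \ref{q-minGK}, the existence of such an $L_q(\Lambda)$ forces $\min\{\kappa_0, \kappa_1\} \leq n$, and consulting Table \ref{MinGK} shows that this inequality holds exactly when $\frakg$ is of type $A_n$, $B_n$, or $C_n$.

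For the converse I would construct cuspidal modules in these three types. In types $A_n$ and $C_n$, the classical Benkart--Britten--Lemire and Mathieu constructions carry over to the quantum setting; such constructions already appear in \cite{CGLW21}. The new case is type $B_n$, where I would exploit the phenomenon identified in Proposition \ref{maxroot-characterizations}: choose a non-linear weight $\Lambda \in \rmT_{\bfk} \setminus \calP$ whose integral root subsystem $\Phi_\Lambda$ is the maximal subsystem $D_n^L$, a subsystem that, by Example \ref{Notclosed} and Remark \ref{Rmk-counterexample}, does not arise from any classical weight $\lambda \in \frakh^*$. For such $\Lambda$ dominant regular, $L_q(\Lambda)$ realizes the GK-dimension $\kappa_0 = n$, and its twisted localization at a root vector produces a coherent family whose cuspidal member provides the desired module.

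The principal obstacle is the invocation of the quantum Mathieu correspondence: one must verify the Ore condition for the chosen root vectors in $\check{U}_q$, carry out the semisimplification of the resulting coherent family, and extract a highest weight member of the same GK-dimension. Classically this is delicate and depends on the injective action of root vectors; in the quantum setting one must additionally track compatibility with the Cartan part $\check{U}_q^0$ and the base change to $\bfk$. The explicit realization in type $B$ is the other subtle point, since the non-dual-closed integral subsystem precludes the naive construction of a cuspidal module via parabolic induction from a Levi of type $D$, and it is precisely here that the quantum phenomenon absent in the classical picture plays its decisive role.
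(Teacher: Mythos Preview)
Your overall strategy aligns with the paper's --- reduce to simple $\frakg$, link the cuspidal module to a highest weight module of GK-dimension $n$, then invoke Table~\ref{MinGK} --- but the mechanism for that link differs substantially, and your version has a genuine gap.

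For the ``only if'' direction, the paper does not use coherent families or Ore localization at all. Instead it argues via primitive ideals: the annihilator $\Ann_{\check{U}_q} M_q$ is primitive, and by Joseph--Letzter \cite[Section~6]{JL95} it coincides with $\Ann_{\check{U}_q} L_q(\Lambda)$ for some $\Lambda \in \rmT_\bfk$. Since $\check{U}_q$ is Noetherian and the representation map on $M_q$ is faithful modulo the annihilator, one gets $d\bigl(\check{U}_q/\Ann_{\check{U}_q} M_q\bigr) \leq 2\,d(M_q) = 2n$, and then \cite[Lemma~6.2(v)]{JL95} forces $d(L_q(\Lambda)) = n$ directly. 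This sidesteps the ``quantum Mathieu'' machinery you propose; the Ore condition for quantum root vectors, the semisimplification of a coherent family, and the extraction of a highest-weight member of equal GK-dimension are all unproved in this setting, and you correctly flag them as the principal obstacle. The primitive-ideal route is both shorter and fully rigorous given the existing literature.

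For the ``if'' direction, no new construction is needed: the paper simply cites \cite{FHW} and \cite[Theorem~7.2]{CGLW21}, which already exhibit explicit cuspidal modules of weight multiplicity one in types $A$, $B$, and $C$. Your proposed type-$B$ construction via twisted localization of an $L_q(\Lambda)$ with $\Phi_\Lambda \cong D_n^L$ is an appealing illustration of the new quantum phenomenon, but it is not required here and would itself demand justification (simplicity and cuspidality of the localized module are not automatic).
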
 
	\begin{proof}
		Without loss of generality, we may assume that $\frakg$ is simple. Let $M_q$ be such a $\check{U}_q$-module.  Consider its annihilator $\mathrm{Ann}_{\check{U}_q}M_q$, which is a primitive ideal of $\check{U}_q$. By \cite[Section 6]{JL95}, this ideal also annihilates a certain highest weight module, say $L_q(\Lambda)$ for some $\Lambda\in \rmT_\bfk$. Since the representation map  $\check{U}_q/\mathrm{Ann}_{\check{U}_q}M_q \rightarrow \End(M_q)$ is injective, and $\check{U}_q$ is Noetherian, it follows that $d\big(\check{U}_q/\mathrm{Ann}_{\check{U}_q}M_q\big) \leqslant 2d(M_q) = 2n $. Thus, by \cite[Lemma 6.2 (v)]{JL95} and the fact that $L_q(\Lambda)$ has the same annihilator $\mathrm{Ann}_{\check{U}_q}M_q$ we have $d(L_q(\Lambda))=n$. From our previous results, such Gelfand-Kirillov dimension $n$ occurs only when $\frakg$ is of type $A$, $B$ or $C$. Conversely, by \cite{FHW} and \cite[Theorem 7.2]{CGLW21}, one can explicitly construct  a family of such modules with each weight multiplicity $1$ for quantum groups of type $A$, $B$ and $C$. Thus, the theorem follows. 
	\end{proof}
	
	\begin{remark}
    \begin{enumerate}[{\rm (i)}]
        \item The result in Theorem \ref{Existence} holds not only for an indeterminate q but also for any specialization $q \to \xi$, where $\xi \in \mathbb{C}$ is transcendental. In fact, since the construction in \cite{CGLW21} applies to every non–root of unity $\xi$, it is natural to expect that the conclusion of Theorem \ref{Existence} should remain valid for all specializations $q \to \xi$ with $\xi$ not a root of unity. 
        \item In fact, the above discussion, together with Example \ref{TypeB-Const} and Theorem \ref{q-minGK}, shows that the existence of cuspidal modules for $U_q$ leads to the same conclusion as in Theorem \ref{Existence}. In contrast, when considering $U_{\bbQ(q)}$, cuspidal modules of type $B$ do not exist.
   \end{enumerate}
	\end{remark}

	\appendix
	\section{Translation functors and structure functors}\label{Appendix}
	The translation and structure functors for $\catO_{q,\bbF}$ behave essentially as in the classical case. However, in our setting, special care should be taken for the ``non-integral'' blocks. In this appendix, we summarize their properties in the deformed framework, mainly following \cite{AJS94} and \cite{Fie06}.
	\subsection{Translation functors}\label{App-Tranfun}
	Let $R$ be a deformation algebra. For any module $ M \in \catO_R $ and any finite-dimensional module $ E \in \catO_{q,\bbF} $,  the tensor product $ E \otimes M $, where we write $\otimes $ for $ \otimes_\bbF$, carries a natural $U_R$-module structure making it an object of $\catO_R $; specifically, the action of $R$ is defined on the second factor only, while the $U_\bbF$-action is given via the comultiplication $\Delta$. These two actions are clearly compatible. The weight space of $E\otimes M$ is of form 
	\[ (E\otimes M)_{\tau\Lambda} = \oplus_{(\mu,\Lambda^\prime)}E_{q^\mu}\otimes  M_{\tau\Lambda^\prime},  \]
	where the sum is taken over all pairs $(\mu,\Lambda^\prime)\in P\times\rmT_\bbF$ such that $q^\mu\Lambda^\prime = \Lambda$. Moreover, one has in general a compatibility with base change: for any morphism $R\rightarrow R^\prime$ of deformation algebras, there is a natural isomorphism 
	\begin{equation}\label{base+tensorproduct}
		(E\otimes M)\otimes_RR^\prime \cong E\otimes (M\otimes_RR^\prime)
	\end{equation}
	in the category $\catO_{R^\prime}$. 
	\begin{lemma}\label{tensoringwithfdimreps}
		For any $\Lambda\in \rmT_\bbF$ and any finite dimensional module $E\in \catO_{q,\bbF}$, there exists a finite decreasing filtration
		\[E\otimes  M_R(\Lambda) = M_1 \supset M_2 \supset \cdots \supset M_r \supset 0 \]
		in $\catO_R$ with quotients isomorphic to $M_R(q^{\nu}\Lambda )$, where $q^\nu$ runs over all weights of $E$ counted with weight multiplicity.   
	\end{lemma}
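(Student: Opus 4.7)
The plan is standard: construct the filtration by selecting an ordering on a basis of weight vectors of $E$ and taking $U_R$-submodules generated by successive tail segments. Fix a basis $v_1,\ldots,v_r$ of $E$ consisting of weight vectors with $v_i$ of weight $q^{\nu_i}$, ordered so that whenever $\nu_j-\nu_i\in Q^+\setminus\{0\}$ one has $j>i$ (any total order refining this partial order will do). Let $v_\Lambda\in M_R(\Lambda)$ be the canonical highest-weight vector and set
\[
M_i \;=\; U_R\cdot\bigl\{v_j\otimes v_\Lambda \bigm| j\geqslant i\bigr\},\qquad 1\leqslant i\leqslant r+1,
\]
with $M_{r+1}=0$. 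This produces a decreasing chain $M_1\supseteq M_2\supseteq\cdots\supseteq M_r\supseteq M_{r+1}=0$ of $U_R$-submodules in $\catO_R$.

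Next I would check that $M_1=E\otimes M_R(\Lambda)$ and identify the subquotients. The coproduct formula $\Delta(F_j)=F_j\otimes K_{\alpha_j}\inv+1\otimes F_j$ yields the identity
\[
v_i\otimes F_jv_\Lambda=F_j\cdot(v_i\otimes v_\Lambda)-(F_jv_i)\otimes K_{\alpha_j}\inv v_\Lambda,
\]
and a straightforward induction on PBW length then shows every element of the $R$-basis $\{v_i\otimes F^{\underline{k}}v_\Lambda\}_{i,\underline{k}}$ of $E\otimes M_R(\Lambda)$ lies in $M_1$. From $\Delta(K_\mu)=K_\mu\otimes K_\mu$ the vector $v_i\otimes v_\Lambda$ has weight $q^{\nu_i}\Lambda$, and from $\Delta(E_j)=E_j\otimes 1+K_{\alpha_j}\otimes E_j$ combined with $E_jv_\Lambda=0$ we get $E_j(v_i\otimes v_\Lambda)=(E_jv_i)\otimes v_\Lambda$. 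Since $E_jv_i$ has weight $q^{\nu_i+\alpha_j}$, the chosen ordering forces $E_jv_i\in\Span\{v_k:k\geqslant i+1\}$, so $E_j(v_i\otimes v_\Lambda)\in M_{i+1}$. By the universal property of Verma modules, there results a $U_R$-linear surjection $\varphi_i\colon M_R(q^{\nu_i}\Lambda)\twoheadrightarrow M_i/M_{i+1}$ sending the canonical generator to the image of $v_i\otimes v_\Lambda$.

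The remaining step is to upgrade each $\varphi_i$ to an isomorphism via a rank comparison. The module $E\otimes M_R(\Lambda)\cong E\otimes_\bbF U_R^-\otimes_R R_\Lambda$ is $R$-free on each weight space by PBW freeness of $U_R^-$, and $\bigoplus_{i=1}^r M_R(q^{\nu_i}\Lambda)$ has the same total $R$-rank on each weight space through the character identity $\ch E\cdot\ch M_R(\Lambda)=\sum_i\ch M_R(q^{\nu_i}\Lambda)$. Combined with surjectivity of the $\varphi_i$, this forces each to be an $R$-module isomorphism on every weight space, hence a $U_R$-module isomorphism. The main obstacle, absent from the classical proof, lies precisely here: one must control freeness of the subquotients $M_i/M_{i+1}$ over the deformation ring $R$ weight-by-weight. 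When $R$ is not a field, this is settled by the base-change compatibility \eqref{base+tensorproduct}, which reduces the verification to the residue field of an arbitrary localization of $R$ at a prime (where the character argument applies verbatim) and then lifts back by a Nakayama-type argument.
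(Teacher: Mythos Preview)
Your approach is correct and produces the same filtration as the paper, but you take an unnecessary detour at the final step. Both you and the paper order a weight basis of $E$ compatibly with the root-lattice partial order and build the filtration from the tails $\{v_j\otimes v_\Lambda : j\geqslant i\}$. The difference lies in verifying $M_i/M_{i+1}\cong M_R(q^{\nu_i}\Lambda)$. The paper simply observes that $E\otimes M_R(\Lambda)$ is a free $U_R^-$-module on the generators $\{v_j\otimes v_\Lambda\}_j$; this is precisely the triangularity encoded in your identity $v_i\otimes F_jv_\Lambda=F_j(v_i\otimes v_\Lambda)-(F_jv_i)\otimes K_{\alpha_j}\inv v_\Lambda$, iterated. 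Consequently each $M_i$ is $U_R^-$-free on $\{v_j\otimes v_\Lambda : j\geqslant i\}$, and the successive quotient is $U_R^-$-free of rank one on a highest-weight vector of weight $q^{\nu_i}\Lambda$, hence isomorphic to $M_R(q^{\nu_i}\Lambda)$ on the nose. No rank comparison, no freeness obstacle, no base-change or Nakayama patch is needed. Your character-count argument works over a field and your proposed localization fix can be made to go through, but the point is that the coproduct computation you already carried out for $M_1$ settles the freeness of every $M_i$ directly; you stopped one step short of using it.
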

	\begin{proof}
		Order a basis of $E$ consisting of vectors $v_1, \dots, v_r$ with weights $q^{\nu_1}, \dots, q^{\nu_r}$ (repetitions allowed) so that $i\leqslant j$ whenever $\nu_i\leq \nu_j$. This induces a filtration $0\subset E_r \subset \cdots \subset E_1=E$ where $E_i$ is the $U_q^0U_q^+$-submodule spanned by $v_i, \dots, v_r$. Note that $E \otimes M_R(\Lambda)$ is isomorphic to a free $U_R^-$-module, and each $E_i \otimes M_R(\Lambda)$ is a free $U_R^-$-submodule of rank $\dim E_i$ (with the trivial $U_R^-$-action on $E_i$). Hence, the corresponding $U_R$-module filtration on $E\otimes M_R(\Lambda)$ has  quotients isomorphic to the Verma $U_R$-modules $M_R(q^{\nu_i}\Lambda )$.
	\end{proof}
	
	For any $\Lambda,\Lambda^\prime\in \rmT_\bbF$ such that  $\Gamma:=\Lambda^\prime\Lambda\inv\in \calP$, there exists a unique element of $\calP^+$ lying in the orbit $\Wch_\Gamma\Gamma$; we denote it by $\overline{\Gamma}:=w\Gamma$ for some  $w\in \Wch_\Gamma$. Let $R$ be a local deformation algebra, and let $X, X^\prime\in \rmT_\bbF/\sim_R$ be the linkage classes containing $\Lambda$ and $\Lambda^\prime$, respectively. For any module $M\in \catO_{R,X}$, we define the \emph{translation functor }
	\[T_\Lambda^{\Lambda^\prime}(M) = \pr_{X^\prime}\left(L_q(\overline{\Gamma})\otimes M\right) \]
	where $\pr_X$ denotes the projection functor from $\catO_R$ onto the block $\calO_{R,X}$.
	\begin{proposition}\label{prop-T}
		Let $X, X^\prime\in \rmT_\bbF/\sim_R$ be the linkage classes containing $\Lambda$ and $\Lambda^\prime$, respectively. Suppose that  $\Lambda^\prime\Lambda\inv\in \calP$. Then 
		\begin{enumerate}[{\rm (1)}]
			\item $T_{\Lambda}^{\Lambda^\prime}$ is an exact functor from $\catO_{R, X}$ to $\catO_{R,X^\prime}$.
			\item For any morphism $R\rightarrow R^\prime$ of local deformation algebras, we have 
			\begin{equation*}
				T_\Lambda^{\Lambda^\prime}(M\otimes_RR^\prime) \cong T_\Lambda^{\Lambda^\prime}(M )\otimes_RR^\prime 
			\end{equation*}
			for any $M\in \catO_{R,X}$. 
			\item If $P\in \catO_{R,X}$ is projective, then $T_\Lambda^{\Lambda^\prime}(P)$ is also projective in $\catO_{R, X^\prime}$. 
			\item $T_{\Lambda}^{\Lambda^\prime}$ is both left and right adjoint to $T_{\Lambda^\prime}^{\Lambda}$, and vice versa.
			
		\end{enumerate}
	\end{proposition}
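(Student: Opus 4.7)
The plan is to establish (1) and (2) by direct inspection of the definition, then prove the adjunction in (4) via the standard Hopf-algebraic duality, and finally deduce (3) as a formal consequence of (1) and (4). For (1), the functor $M\mapsto L_q(\overline{\Gamma})\otimes M$ is exact on $\catO_R$ since $L_q(\overline{\Gamma})$ is finite-dimensional over $\bbF$ (the $R$-action sits only on the right factor and the $U_R$-action comes from $\Delta$), and its output lies in $\catO_R$ by Lemma \ref{tensoringwithfdimreps}; the block projection $\pr_{X^\prime}$ is exact as blocks are direct summands, so $T_\Lambda^{\Lambda^\prime}$ is exact. For (2), combine the associativity isomorphism $(L_q(\overline{\Gamma})\otimes M)\otimes_R R^\prime \cong L_q(\overline{\Gamma})\otimes (M\otimes_R R^\prime)$ of \eqref{base+tensorproduct} with the compatibility of the block projection with base change along $R\to R^\prime$, which holds because the linkage partition over $R^\prime$ refines that over $R$ and weight-space decompositions are preserved under scalar extension.

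For (4), the key step is to identify the left (resp. right) dual of $L_q(\overline{\Gamma})$ with $L_q(\overline{\Gamma^{-1}})$: using the antipode, the dual is a simple finite-dimensional $U_\bbF$-module whose highest weight equals $-w_0\overline{\Gamma}$ with $w_0$ the longest element of $W$, and after absorbing the sign twist into the $\bbZ_2^n$-factor of $\Wch$ this matches the unique dominant representative of $\Wch_{\Gamma^{-1}}\Gamma^{-1}$. The standard Hopf adjunction then yields
\[ \Hom_{\catO_R}\bigl(L_q(\overline{\Gamma})\otimes M,\, N\bigr) \cong \Hom_{\catO_R}\bigl(M,\, L_q(\overline{\Gamma^{-1}})\otimes N\bigr) \]
for $M\in \catO_{R,X}$ and $N\in \catO_{R,X^\prime}$, and since block projections act as the identity on modules already supported in a single block, this specializes to
\[ \Hom_{\catO_{R,X^\prime}}\bigl(T_\Lambda^{\Lambda^\prime} M,\, N\bigr) \cong \Hom_{\catO_{R,X}}\bigl(M,\, T_{\Lambda^\prime}^{\Lambda} N\bigr). \]
The right-adjoint property follows symmetrically using the opposite dual.

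For (3), given a projective $P\in \catO_{R,X}$, the functor $\Hom_{\catO_{R,X^\prime}}(T_\Lambda^{\Lambda^\prime}P,\,-) \cong \Hom_{\catO_{R,X}}(P,\, T_{\Lambda^\prime}^{\Lambda}(-))$ by (4) is a composition of the exact functors $T_{\Lambda^\prime}^{\Lambda}$ (by (1)) and $\Hom_{\catO_{R,X}}(P,\,-)$ (by projectivity), hence exact; therefore $T_\Lambda^{\Lambda^\prime}P$ is projective. The main technical obstacle is the dual identification $L_q(\overline{\Gamma})^* \cong L_q(\overline{\Gamma^{-1}})$, which requires tracking the $\bbZ_2^n$-sign twists inside $\Wch$ and confirming that the dominant element of $\Wch_{\Gamma^{-1}}\Gamma^{-1}$ indeed coincides with the highest weight of the Hopf-theoretic dual; once this is settled, the remainder is routine functoriality using Lemma \ref{tensoringwithfdimreps} and the block decomposition.
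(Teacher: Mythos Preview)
Your proposal is correct and follows essentially the same approach as the paper: both use exactness of tensoring with a finite-dimensional module plus block projection for (1), the associativity isomorphism \eqref{base+tensorproduct} for (2), and the Hopf-algebraic identification of $L_q(\overline{\Gamma})^*$ with the module defining $T_{\Lambda^\prime}^{\Lambda}$ to obtain the biadjunction in (4). The only cosmetic difference is that the paper proves (3) directly from the fact that $L\otimes P$ is projective whenever $P$ is (which itself rests on the same adjunction), whereas you deduce (3) as a corollary of (4); these are equivalent reorderings of the same argument.
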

	\begin{proof}
		The first statement is clear, and the second is deduced from the isomorphism \eqref{base+tensorproduct}. Note that if $P\in \catO_{R}$ is projective, then $L\otimes P\in \catO_{R}$ is also  projective for any finite dimensional $U_q$-module $L$, and then (3) follows. For (4), interchanging the roles of $\Lambda$ and $\Lambda^\prime$, we obtain the translation functor $T_{\Lambda^\prime}^\Lambda$, where the involved finite-dimensional simple module is isomorphic to the dual $L_q(\overline{\Gamma})^*$. For any $M\in \catO_{R,X}$, $N\in \catO_{R,X^\prime}$, we have the following isomorphism 
		\begin{equation}\label{adj1}
			\Hom_{\catO_R}(M, T_{\Lambda^\prime}^\Lambda(N)) \cong \Hom_{\catO_R}(T_\Lambda^{\Lambda^\prime}(M),N), 
		\end{equation}
		and identifying the $U_\bbF$-module $L_q(\overline{\Gamma})$ with its double dual via the map $v \mapsto (v: f \mapsto f(K_{2\rho}\inv v))$, we obtain another isomorphism 
		\begin{equation}\label{adj2}
			\Hom_{\catO_R}(N, T_{\Lambda}^{\Lambda^\prime}(M)) \cong \Hom_{\catO_R}(T_{\Lambda^\prime}^{\Lambda}(N),M). 
		\end{equation}
		The isomorphisms~\eqref{adj1} and ~\eqref{adj2} show that $T_{\Lambda}^{\Lambda^\prime}$ is both left and right adjoint to $T_{\Lambda^\prime}^{\Lambda}$, and vice versa. For further details, see \cite[Subsection 7.6]{AJS94}. 
	\end{proof}
	We further assume that the integral weight $\Gamma$ is of the form $q^\nu$ for some $\nu\in P$. In this case, we say that $\Lambda$ and $\Lambda^\prime$ are compatible. Then Corollary \ref{int-Weyl} yields that 
	\[\Phi_{R,\Lambda} = \Phi_{R,\Lambda^\prime} \quad \text{and}\quad \Wch_{R,\Lambda} = \Wch_{R,\Lambda^\prime}. \]
	Recall the condition \eqref{WeylCha}: two compatible weights $\Lambda, \Lambda^\prime$ lie in {the closure of the same ``facet''} for $\Wch_{R,\Lambda}$ if they satisfy 
	\begin{equation}\label{WeylCham}
		q^{2(\rho, \alpha)}(\tau\Lambda)(K_{2\alpha})\in q^{\bbN(\alpha,\alpha)} \quad \text{iff}\quad q^{2(\rho, \alpha)}(\tau\Lambda^\prime)(K_{2\alpha})\in q^{\bbN(\alpha,\alpha)}\quad \text{for all } \alpha\in \Phi_{R,\Lambda}.
	\end{equation}
	Intuitively, the weights $\tau\Lambda$ and $\tau\Lambda^\prime$ satisfying the above condition determine two integral weights in the Euclidean subspace $E(\tau\Lambda)$ spanned by $\Phi_{R,\Lambda}$, which lie in the closure of the same facet in the classical sense. 
	
	\begin{lemma}\label{Tran-Verma}
		Let $\Lambda, \Lambda^\prime\in \rmT_\bbF$ be antidominant weights with respect to $\sim_R$. Suppose $\Lambda, \Lambda^\prime$ are  compatible and satisfy the condition \eqref{WeylCham}. Let $w\in \Wch_{R,\Lambda}$. Then $T_\Lambda^{\Lambda^\prime}M_R(w\cdot\Lambda)$ has a Verma flag with subquotients isomorphic to  
		$M_R(w\bar{x}\cdot\Lambda^\prime)  $
		for $ \bar{x}\in \Stab_R(\Lambda)/\Stab_R(\Lambda)\cap \Stab_R(\Lambda^\prime)$, each occurring once.
	\end{lemma}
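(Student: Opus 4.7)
The plan is to combine the Verma-flag decomposition of $L_q(\overline{\Gamma})\otimes M_R(w\cdot\Lambda)$ supplied by Lemma~\ref{tensoringwithfdimreps} with the exactness of the block projection $\pr_{X^\prime}$, and then to identify which subquotients survive by a weight computation. Writing $\Gamma = \Lambda^\prime\Lambda^{-1}$, compatibility gives $\Gamma = q^\nu$ for a unique $\nu\in P$, and we set $\overline{\Gamma}=q^{\overline{\nu}}$ where $\overline{\nu}\in P^+$ is the dominant $W$-translate. Lemma~\ref{tensoringwithfdimreps} produces a finite filtration of $L_q(\overline{\Gamma})\otimes M_R(w\cdot\Lambda)$ in $\catO_R$ whose successive quotients are Verma modules $M_R(q^\mu(w\cdot\Lambda))$, with $q^\mu$ ranging over the weights of $L_q(\overline{\Gamma})$ counted with multiplicity. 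Since $\pr_{X^\prime}$ is exact and sends $M_R(\Xi)$ to itself when $\Xi\in X^\prime$ and to zero otherwise, we obtain a filtration of $T_\Lambda^{\Lambda^\prime}M_R(w\cdot\Lambda)$ whose subquotients are indexed precisely by those weights $\mu$ of $L_q(\overline{\Gamma})$ for which $q^\mu(w\cdot\Lambda)\in X^\prime$.

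It remains to match these surviving weights against $\Stab_R(\Lambda)/\bigl(\Stab_R(\Lambda)\cap\Stab_R(\Lambda^\prime)\bigr)$. By Corollary~\ref{int-Weyl} one has $\Wch_{R,\Lambda}=\Wch_{R,\Lambda^\prime}$, so $X^\prime = \Wch_{R,\Lambda}\cdot\Lambda^\prime$. For each $\bar x\in \Stab_R(\Lambda)$, a direct calculation with the shifted action together with Lemma~\ref{ref-weight}(1) and the $W$-equivariance of the group structure on $\rmT_R$ yields the identity
\[
(w\bar x)\cdot\Lambda^\prime \;=\; q^{w\bar x\nu}\,(w\cdot\Lambda).
\]
Hence $\mu = w\bar x\nu$ lies in the Weyl orbit $W\overline{\nu}$ and is an extremal weight of $L_q(\overline{\Gamma})$, occurring with multiplicity one. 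Two elements $\bar x,\bar x^\prime\in \Stab_R(\Lambda)$ produce the same subquotient precisely when $(w\bar x)\cdot\Lambda^\prime = (w\bar x^\prime)\cdot\Lambda^\prime$, that is, when $\bar x^{-1}\bar x^\prime\in \Stab_R(\Lambda^\prime)$; hence the distinct contributions are indexed exactly by $\Stab_R(\Lambda)/\bigl(\Stab_R(\Lambda)\cap\Stab_R(\Lambda^\prime)\bigr)$, each appearing once.

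The main obstacle is the converse: to verify that \emph{no} weight $\mu$ of $L_q(\overline{\Gamma})$ lying strictly inside the convex hull of $W\overline{\nu}$ can yield $q^\mu(w\cdot\Lambda)\in X^\prime$. This is precisely where the closure-of-facet hypothesis~\eqref{WeylCham}, combined with the antidominance of both $\Lambda$ and $\Lambda^\prime$, becomes essential: translating the equation $w\cdot\Lambda+\mu = u\cdot\Lambda^\prime$ to the additive picture inside the Euclidean span of $\Phi_{R,\Lambda}$ reduces the problem to the classical translation principle (cf.~\cite[Theorems~7.6--7.7]{Hum08} and the root-of-unity analogue in \cite{AJS94}); \eqref{WeylCham} forces $u(\lambda+\rho) = w(\lambda+\rho)$, whence $u\in w\Stab_R(\Lambda)$ and $\mu\in W\nu$. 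For the $R$-deformed statement, one reduces to the residue field $\bbK$ via Proposition~\ref{prop-T}(2) and the compatibility of Verma flags with base change, so that the combinatorial count becomes identical to the classical one.
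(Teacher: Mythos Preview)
Your proposal is correct and follows essentially the same route as the paper: both start from Lemma~\ref{tensoringwithfdimreps}, project with $\pr_{X'}$, and then reduce the identification of the surviving subquotients to the classical translation principle by passing to an ``additive'' picture inside the Euclidean span of $\Phi_{R,\Lambda}$ (the paper makes this reduction explicit by introducing the integral part $\hat\lambda\in E(\tau\Lambda)$ with $(\tau\Lambda)(K_\alpha)=\sigma(K_\alpha)q^{(\hat\lambda,\alpha)}$, and then invokes \cite[II.7.8]{Jan03} and \cite[Lemma 7.5]{Hum08}). Your forward direction, exhibiting each $(w\bar x)\cdot\Lambda'=q^{w\bar x\nu}(w\cdot\Lambda)$ as an extremal-weight contribution, is a bit more explicit than the paper's treatment, but the substance of the argument is the same.
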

	\begin{proof}
		First, $T_\Lambda^{\Lambda^\prime}(M_R(w\cdot\Lambda))$ admits a Verma flag whose subquotients are parametrized by  $q^\nu w\cdot\Lambda$ lying in $\Wch_{R,\Lambda^\prime}\cdot\Lambda^\prime$. Associate with $\tau\Lambda$ its integral part $\hat{\lambda}\in E(\tau\Lambda)$ such that $(\hat{\lambda}, \alpha^\vee)\in \bbZ$ and $(\tau\Lambda)(K_\alpha) = \sigma(K_\alpha)q^{(\hat{\lambda},\alpha)}$ for all $\alpha\in \Phi_{R,\Lambda}$, with $\sigma\in \bbZ_2^n$ determined by the signs of $(\tau\Lambda)(K_\alpha)$. Then it can be checked that 
		$(\tau\Lambda)(s_{R,\alpha}^\Lambda\cdot\tau\Lambda)\inv(K_\beta) = q^{(\hat{\lambda}-s_\alpha\cdot \hat{\lambda}, \beta)}$ for any $ \alpha, \beta\in \Phi_{R,\Lambda}$. Restricting $\check{(\text{-})}$ from Definition \ref{Isomorp} we get the stablizer of $\hat{\lambda}$ in $W_{R,\Lambda}$ with repsect to the shifted action is isomorphic to $\Stab_{\Wch_{R,\Lambda}}(\Lambda)$. 
		 Using \eqref{WeylCham}, we may work with the integral parts of $\tau\Lambda, \tau\Lambda^\prime$ inside the Euclidean space $E(\tau\Lambda)$. Repeating the classical argument of \cite[Section II. 7.8]{Jan03} and \cite[Lemma 7.5]{Hum08}, one verifies that the weights that occur are precisely $w\bar{x}\cdot \Lambda^\prime$ for $ \bar{x}\in \Stab_R(\Lambda)/\Stab_R(\Lambda)\cap \Stab_R(\Lambda^\prime)$, and that each Verma module appearing as a subquotient has multiplicity $1$. 
	\end{proof}
	
	\begin{remark}
		It follows that the translation functor $T_\Lambda^{\Lambda^\prime}$ sends modules with a Verma flag to modules with a Verma flag. 
	\end{remark}
	
	\begin{corollary}\label{App-tran-proj}
		Let 
		$X, X^\prime \in \rmT_\bbF/\sim_R$ be the linkage classes containing antidominant weights
		$\Lambda$ and $\Lambda^\prime$, respectively. 	Let $\Lambda, \Lambda^\prime$ be as in the above lemma, and let moreover  $\Stab_R(\Lambda) \subset \Stab_R(\Lambda^\prime)$. Then the following hold: 
		\begin{enumerate}[{\rm (1)}]
			\item For $w\in \Wch_{R,X}$, the module $T_\Lambda^{\Lambda^\prime}P_R(w\cdot\Lambda)$ decomposes into $|\Stab_R(\Lambda^\prime)/\Stab_R(\Lambda)|$ copies of $P_R({w}\cdot\Lambda^\prime)$,  possibly with additional summands $P_R({w}'\cdot\Lambda^\prime)$ with ${w}'\cdot\Lambda^\prime>{w}\cdot\Lambda^\prime$. 
			\item Suppose further that $\Stab_R(\Lambda) = \Stab_R(\Lambda^\prime)$. Then the functors $T_\Lambda^{\Lambda^\prime}: \catO_{R, X}\to \catO_{R, X^\prime}$ and $T_\Lambda^{\Lambda^\prime}:\catO_{R, X^\prime} \to \catO_{R, X}$ are mutually inverse equivalences. 
		\end{enumerate}
	\end{corollary}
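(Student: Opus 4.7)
The plan is to leverage the biadjointness of translation functors (Proposition~\ref{prop-T}(4)) together with their preservation of projectivity (Proposition~\ref{prop-T}(3)): this reduces the direct-summand decomposition in part~(1) to a composition-factor computation for translated simples, and part~(2) will then follow by applying part~(1) symmetrically in both directions. For part~(1), $T_\Lambda^{\Lambda^\prime}P_R(w\cdot\Lambda)$ is projective in $\catO_{R,X^\prime}$, so it splits as $\bigoplus_{y}P_R(y\cdot\Lambda^\prime)^{\oplus n_{w,y}}$ for nonnegative integers $n_{w,y}$. By the base-change isomorphism~\eqref{Proj-basechange}, the multiplicities can be read off on the closed fibre as $n_{w,y}=\dim_\bbK\Hom_{\catO_\bbK}\bigl(T_\Lambda^{\Lambda^\prime}P_\bbK(w\cdot\Lambda),L_\bbK(y\cdot\Lambda^\prime)\bigr)$; the biadjunction then transforms this into
\[
n_{w,y}=\dim_\bbK\Hom_{\catO_\bbK}\bigl(P_\bbK(w\cdot\Lambda),\,T_{\Lambda^\prime}^\Lambda L_\bbK(y\cdot\Lambda^\prime)\bigr)=\bigl[T_{\Lambda^\prime}^\Lambda L_\bbK(y\cdot\Lambda^\prime):L_\bbK(w\cdot\Lambda)\bigr].
\]

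The crux is the computation of the composition factors of $T_{\Lambda^\prime}^\Lambda L_\bbK(y\cdot\Lambda^\prime)$. Using the facet hypothesis~\eqref{WeylCham} and the integral-part reduction introduced in Lemma~\ref{Tran-Verma}, I would run the classical wall-crossing analysis inside the Euclidean subspace $E(\tau\Lambda)$ spanned by $\Phi_{R,\Lambda}$ to conclude that these composition factors are precisely the $L_\bbK(z\cdot\Lambda)$ for $z$ running over representatives of $y\,\Stab_R(\Lambda^\prime)/\Stab_R(\Lambda)$, each with multiplicity one. Specialising to $y=w$ yields $n_{w,w}=|\Stab_R(\Lambda^\prime)/\Stab_R(\Lambda)|$. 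For any other $y$ with $n_{w,y}\neq 0$, one obtains some $z\in y\,\Stab_R(\Lambda^\prime)$ with $z\cdot\Lambda\geq w\cdot\Lambda$; combined with the identity $T_\Lambda^{\Lambda^\prime}M_R(z\cdot\Lambda)=M_R(z\cdot\Lambda^\prime)$ from Lemma~\ref{Tran-Verma} under the assumption $\Stab_R(\Lambda)\subset\Stab_R(\Lambda^\prime)$ (which collapses its Verma flag to a single factor) and the order-preserving identification $z\cdot\Lambda\mapsto z\cdot\Lambda^\prime$ on the coset level, this forces $y\cdot\Lambda^\prime>w\cdot\Lambda^\prime$.

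For part~(2), under $\Stab_R(\Lambda)=\Stab_R(\Lambda^\prime)$ part~(1) specialises to $n_{w,w}=1$, with the symmetric statement holding for $T_{\Lambda^\prime}^\Lambda$. Comparing Verma flags on both sides via Lemma~\ref{Tran-Verma} and the BGG-reciprocity of Lemma~\ref{BGG-reciprocity}, the ranks of $T_\Lambda^{\Lambda^\prime}P_R(w\cdot\Lambda)$ and $P_R(w\cdot\Lambda^\prime)$ as free $R$-modules coincide, so no extra summands can appear; symmetrically for $T_{\Lambda^\prime}^\Lambda$. The biadjunction then supplies unit and counit natural transformations $\mathrm{id}\to T_{\Lambda^\prime}^\Lambda T_\Lambda^{\Lambda^\prime}$ and $T_\Lambda^{\Lambda^\prime}T_{\Lambda^\prime}^\Lambda\to\mathrm{id}$; on each projective generator $\bigoplus_w P_R(w\cdot\Lambda)$ they match identical objects, hence are isomorphisms, and by exactness one concludes they are natural isomorphisms on the whole category, giving the desired equivalence.

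The principal obstacle is the deformed wall-crossing step in the second paragraph: the classical Jantzen/Humphreys computation of $T_{\Lambda^\prime}^\Lambda L_\bbK(y\cdot\Lambda^\prime)$ is couched in the integral setting, while our $\Lambda$ and $\Lambda^\prime$ need not be integral. The hypothesis~\eqref{WeylCham} is precisely what allows one to reduce to the integral case by passing to the integral part inside $E(\tau\Lambda)$, so the facet geometry there governs the argument; once this identification is in place, the remaining work is bookkeeping on coset representatives of $\Stab_R(\Lambda^\prime)/\Stab_R(\Lambda)$, and the quantum deformation introduces no further essential difficulty.
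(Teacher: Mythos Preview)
Your overall strategy for part~(1) via biadjunction is sound up to the identity
\[
n_{w,y}=\bigl[T_{\Lambda'}^{\Lambda}L_\bbK(y\cdot\Lambda'):L_\bbK(w\cdot\Lambda)\bigr],
\]
but the next step fails: it is \emph{not} true that the composition factors of $T_{\Lambda'}^{\Lambda}L_\bbK(y\cdot\Lambda')$ are precisely the $L_\bbK(z\cdot\Lambda)$ for $z$ ranging over $y\,\Stab_R(\Lambda')/\Stab_R(\Lambda)$, each with multiplicity one. Translating a simple \emph{out of} a wall is the hard direction; its composition series is governed by Kazhdan--Lusztig combinatorics and typically contains many more factors. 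Already for $\mathfrak{sl}_2$ with $\Lambda$ regular antidominant and $\Lambda'$ singular (so $\Stab_R(\Lambda')=\{e,s\}$), one has $L_\bbK(\Lambda')=M_\bbK(\Lambda')$ and hence $T_{\Lambda'}^{\Lambda}L_\bbK(\Lambda')$ carries a Verma flag $M_\bbK(s\cdot\Lambda),\,M_\bbK(\Lambda)$, giving composition factors $L_\bbK(s\cdot\Lambda)$ once and $L_\bbK(\Lambda)$ \emph{twice}, contradicting your claim. What Lemma~\ref{Tran-Verma} actually gives is the Verma-flag description of $T_{\Lambda'}^{\Lambda}M_R(y\cdot\Lambda')$; you have conflated this with the composition series of the translated simple.

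The paper sidesteps this by never translating simples in the hard direction. Instead it applies Lemma~\ref{Tran-Verma} in the direction $T_\Lambda^{\Lambda'}$ (where, under $\Stab_R(\Lambda)\subset\Stab_R(\Lambda')$, each Verma goes to a single Verma) to the Verma flag of $P_R(w\cdot\Lambda)$ supplied by Lemma~\ref{BGG-reciprocity}. This produces a Verma flag for $T_\Lambda^{\Lambda'}P_R(w\cdot\Lambda)$, and the summand multiplicities are then read off by matching the lowest Verma factor of each possible $P_R(y\cdot\Lambda')$ against this flag, using the facet hypothesis to compare the orders on the $\Lambda$- and $\Lambda'$-sides. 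If you want to keep an adjunction argument, the workable variant is the one used in the proof of Proposition~\ref{EndAlg-Antidom-Projs}: compute multiplicities of summands of $T_{\Lambda'}^{\Lambda}P_R(\Lambda')$ via $T_\Lambda^{\Lambda'}$ applied to simples (the easy, ``into the wall'' direction), not the reverse. Your treatment of part~(2) is fine and essentially matches the paper's.
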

	\begin{proof}
			For (1), it is clear that $T_\Lambda^{\Lambda'} P_R(w\!\cdot\!\Lambda)$ is projective with a Verma flag, and Lemma \ref{BGG-reciprocity} shows that the Verma subquotients of $P_R(w\cdot\Lambda)$ are exactly the $M_R(w'\cdot\Lambda)$ with $w'\cdot\Lambda \ge w\cdot\Lambda$, and $M_R(w\cdot\Lambda)$ occurs once. Hence Lemma A.3 shows that any indecomposable summand of $T_\Lambda^{\Lambda'} P_R(w\!\cdot\!\Lambda)$ must be of the form  $P_R({w}'\cdot\Lambda^\prime)$ with ${w}'\cdot\Lambda^\prime \geq {w}\cdot\Lambda^\prime$. By the assumption on $\Lambda, \Lambda^\prime$, the inequality $w'\cdot\Lambda^\prime \geq w\cdot\Lambda^\prime$ forces $w'\cdot\Lambda\geq w\cdot\Lambda$; in particular, $wx\cdot\Lambda\geq w\cdot\Lambda$ for all $x\in \Stab_R(\Lambda^\prime)$. Therefore the summand $P_R({w}\cdot\Lambda^\prime)$ appears with multiplicity $|\Stab_R(\Lambda^\prime)/\Stab_R(\Lambda)|$. 
			For (2), by Lemma \ref{Tran-Verma}, $T_\Lambda^{\Lambda^\prime}M_R(w\cdot\Lambda) \cong M_R(w\cdot\Lambda^\prime)$ and $T_{\Lambda^\prime}^{\Lambda}M_R(w\cdot\Lambda^\prime) \cong M_R(w\cdot\Lambda)$ for all $w\in \Wch_{R,\Lambda}$, which follows that  $T_\Lambda^{\Lambda^\prime}\circ T_{\Lambda^\prime}^{\Lambda}$ and $T_{\Lambda^\prime}^{\Lambda}\circ T_\Lambda^{\Lambda^\prime}$ are naturally isomorphic to the identity functor on the full subcategory of modules admitting a Verma flag; in particular, this holds for projective objects. Since projective modules generate the category and the functors involved are exact, we conclude that they are naturally isomorphic to the identity functor on the entire categories. 
	\end{proof}
	Let $R = \calR$ be as in Subsection \ref{Comb-catO}, and let $\Lambda, \Lambda^\prime\in \rmT_{\bbF}$ be compatible weights satisfying the condition \eqref{WeylCham}.  When $\Lambda$ is regular and $\Stab_R(\Lambda) = \{e, s \}$ for some reflection $s$ in $\Wch_\Lambda$, we refer to the composition $\Theta_{s} :=T_{\Lambda^\prime}^\Lambda\circ T_{\Lambda}^{\Lambda^\prime} $ as the \emph{translation
	through the ``$s$-wall''}. By the above corollary, $\Theta_s$ is independent of the particular choice of $\Lambda, \Lambda^\prime$.
	
	\subsection{Structure functors}\label{App-Structure-functor}Let $R$ be a localization of $U_\bbF^0$ at a prime ideal with $\bbK$ its residue field, and let $X\in \rmT_\bbF/\sim_{R}$.  Suppose that $\Lambda\in X$ is antidominant. Recall the $R$-algebra $\calZ_{R,X}=\End_{\catO_{R}}(P_R(\Lambda))$ is commutative (see Subsection \ref{Deformed-End-Alg}).  The structure functor is defined as 
	\[\xymatrixcolsep{2pc}\xymatrix{
		\bbV = \bbV_{R,X}: = \Hom_{\catO_R}(P_R(\Lambda), \text{-}): \catO_{R,X} \ar[r] &  \calZ_{R,X}\text{-}\mathrm{mod}} \]
	where $\calZ_{R,X}\text{-}\mathrm{mod}$ denotes the category of finitely generated $\calZ_{R,X}$-modules.
	\begin{lemma}\label{propertiesforfunctorV}
		
		\begin{enumerate}[{\rm (1)}]
			\item The functor $\bbV $ is an exact functor, i.e., sends short exact sequences in $\catO_{R,X}$ to short exact sequences in $\calZ_{R,X}\text{-}\mathrm{mod}$. 
			\item For any $w\in \Wch_{R,X}$ there is an isomorphism 
			\[\bbV M_R(w\cdot \Lambda) \cong \calZ_{R,X}/\calI_{w\cdot \Lambda}, \]
			where $\calI_{w\cdot \Lambda}\subset \calZ_{R,X}$ is the ideal generated by elements acting trivially on $\bbV M_R(w\cdot \Lambda)$. 
		\end{enumerate}
	\end{lemma}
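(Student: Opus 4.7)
For part (1), the plan is to invoke projectivity of $P_R(\Lambda)$ in the abelian category $\catO_R$: this makes $\Hom_{\catO_R}(P_R(\Lambda),-)$ an exact functor to abelian groups, and the natural right $\calZ_{R,X}$-module structure given by precomposition preserves exactness, so the induced functor $\bbV$ to $\calZ_{R,X}\text{-}\mathrm{mod}$ is exact. Finite generation of $\bbV(M)$ for $M\in \catO_{R,X}$ follows from the finite weight-space condition built into the definition of $\catO_R$ together with the observation that any morphism out of $P_R(\Lambda)$ is determined by the image of a single generator in weight $\Lambda$.

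For part (2), observe first that $\calI_{w\cdot\Lambda}$ is by definition the annihilator $\mathrm{Ann}_{\calZ_{R,X}}(\bbV M_R(w\cdot\Lambda))$, so $\calZ_{R,X}/\calI_{w\cdot\Lambda}$ always embeds into $\End_R(\bbV M_R(w\cdot\Lambda))$. The content of the statement is thus that $\bbV M_R(w\cdot\Lambda)$ is \emph{cyclic} as a $\calZ_{R,X}$-module: once a generator $\nu$ is exhibited, the map $z \mapsto z\cdot\nu$ factors as a surjection $\calZ_{R,X}\twoheadrightarrow \bbV M_R(w\cdot\Lambda)$ with kernel exactly $\calI_{w\cdot\Lambda}$, yielding the asserted isomorphism.

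My plan for establishing cyclicity combines a concrete construction of a generator with Nakayama's lemma on the local ring $R$. First, I would construct a canonical element $\phi_w \in \bbV M_R(w\cdot\Lambda)$ as the composition $P_R(\Lambda)\twoheadrightarrow L_R(\Lambda)=M_R(\Lambda)\xrightarrow{\iota_w}M_R(w\cdot\Lambda)$, where $\iota_w$ is the deformation of the classical BGG embedding of the antidominant Verma (existing over $R$ by the freeness of Verma modules and the $R$-liftability of singular vectors). Next, by projectivity of $P_R(\Lambda)$ and the base-change formula~\eqref{Proj-basechange}, there is a natural $\calZ_{R,X}\otimes_R\bbK$-equivariant isomorphism
\[
\bbV M_R(w\cdot\Lambda)\otimes_R \bbK \;\cong\; \Hom_{\catO_\bbK}\!\bigl(P_\bbK(\Lambda), M_\bbK(w\cdot\Lambda)\bigr),
\]
where $\calZ_{R,X}\otimes_R\bbK\cong \End_{\catO_\bbK}(P_\bbK(\Lambda))$. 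If the base-changed module is cyclic over $\calZ_{R,X}\otimes_R\bbK$ with generator the reduction of $\phi_w$, then by Nakayama $\phi_w$ itself generates $\bbV M_R(w\cdot\Lambda)$ over $\calZ_{R,X}$, completing the proof.

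The main obstacle will be establishing cyclicity at the residue field $\bbK$. The $\bbK$-dimension of $\Hom_{\catO_\bbK}(P_\bbK(\Lambda),M_\bbK(w\cdot\Lambda))$ equals the Jordan-H\"older multiplicity $[M_\bbK(w\cdot\Lambda):L_\bbK(\Lambda)]$, which in general exceeds one, so the $\End_{\catO_\bbK}(P_\bbK(\Lambda))$-action must be genuinely non-scalar for cyclicity to hold. I would proceed by induction on the Bruhat length $l(w)$ in $(W_{R,X},S_{R,X})$, combining the inductive description of $\bbV P_R(w\cdot\Lambda)$ from Subsection~\ref{Comb-catO}(iii) with the intertwining isomorphisms $\bbV \circ T_{\Lambda^\prime}^\Lambda \cong \Ind\circ\bbV$ and $\bbV \circ T_\Lambda^{\Lambda^\prime}\cong \Res\circ\bbV$ of Lemma~\ref{isoms-of-funtors}. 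The base case $w=e$ is immediate from $\Hom_{\catO_\bbK}(P_\bbK(\Lambda),M_\bbK(\Lambda))\cong \bbK$; the inductive step uses wall-crossing translations $\Theta_s$ to transfer a cyclic generator from length $l(w)-1$ to a generating element at length $l(w)$, exploiting exactness of $\bbV$ from part~(1) on short exact sequences relating the relevant projectives and Verma modules.
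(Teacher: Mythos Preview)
Part~(1) is fine and matches the paper.

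For part~(2) you correctly identify that the content is cyclicity of $\bbV M_R(w\cdot\Lambda)$ over $\calZ_{R,X}$, and that Nakayama is the right tool. But you then assert that the residue-field dimension $[M_\bbK(w\cdot\Lambda):L_\bbK(\Lambda)]$ ``in general exceeds one'' and build an induction on Bruhat length via wall-crossing to work around this. That assertion is false: $\Lambda$ is chosen \emph{antidominant} in its linkage class, so $L_\bbK(\Lambda)=M_\bbK(\Lambda)$, and the antidominant simple occurs with multiplicity exactly~$1$ in every Verma module of the block (in Kazhdan--Lusztig terms $P_{e,w}=1$ for all $w$; equivalently, by BGG reciprocity each Verma appears once in the Verma flag of the big projective $P_\bbK(\Lambda)$). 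Consequently the paper's proof is a single step: by~\eqref{Proj-basechange} one has $\dim_\bbK\bigl(\bbV M_R(w\cdot\Lambda)\otimes_R\bbK\bigr)=1$, so Nakayama over the local ring $R$ produces a single $R$-module generator $f$, and $z\mapsto z.f$ surjects $\calZ_{R,X}$ onto $\bbV M_R(w\cdot\Lambda)$ with kernel $\calI_{w\cdot\Lambda}$.

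Two smaller issues with your proposed workaround, had it been needed. First, invoking Lemma~\ref{isoms-of-funtors} here risks circularity, since that lemma is established in the paper via Corollary~\ref{prop-V}, whose proof in turn relies on the argument of the present lemma. Second, the equality $L_R(\Lambda)=M_R(\Lambda)$ you use in constructing $\phi_w$ fails over $R$: the simple objects of $\catO_R$ are modules over the residue field, so one only has $L_\bbK(\Lambda)=M_\bbK(\Lambda)$ after reduction.
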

	\begin{proof}
		The first assertion is clear. For (2), using the formula \eqref{Proj-basechange} and Lemma \ref{BGG-reciprocity} we obtain that $\dim_\bbK \bbV M_R(\Lambda^\prime)\otimes_R \bbK =1$. 
		Hence, by Nakayama's Lemma, $\bbV(M_R(\Lambda^\prime))$ is generated by a single element as $R$-modules, say $f$. It follows that the map $\calZ_{R,X} \to \bbV M_R(\Lambda^\prime)$ via sending $z$ to $z.f$ is surjective. This proves (2). 
	\end{proof}
	
	Let $\calM_R$ be the full subcategory of $\catO_{R}$ consisting of modules which admits a Verma flag. In particular, all projectives in $\catO_{R}$ belong to $\calM_R$. Let $\calM_{R,X}=\calM_R\cap \catO_{R,X}$ be the corresponding block of $\calM_{R}$. Using the formula \eqref{Proj-basechange} together with Lemma \ref{propertiesforfunctorV}(1) and the argument in the proof of (2), we can directly deduce the following statement.
	\begin{corollary}\label{prop-V}
		\begin{enumerate}[{\rm (1)}]
			\item If $M\in \calM_{R,X}$, then $\bbV M$ is free of finite rank over $R$. In particular, $\bbV M_R(w\cdot \Lambda)$ is free of rank one over $R$. 
			\item $\bbV$ commutes with base changes $R\rightarrow R^\prime$, i.e., there are natural isomorphisms of functors 
			\[\bbV_{R,X}(\text{-})\otimes_R R^\prime \cong \bigoplus_i \bbV_{R^\prime,X_i}(\text{-}\otimes_R R^\prime), \]
			where $X = \sqcup_i X_i$ is the splitting of $X$ under $\sim_{R^\prime}$. \qed 
		\end{enumerate}
	\end{corollary}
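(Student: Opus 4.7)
The plan is to handle the two assertions in sequence, each by reducing to Lemma \ref{propertiesforfunctorV} combined with the projective base-change isomorphism \eqref{Proj-basechange}. For (1), I would first prove the ``in particular'' clause and then bootstrap to the general case. Lemma \ref{propertiesforfunctorV}(2) already produces a surjection of $R$-modules $\calZ_{R,X}\twoheadrightarrow \bbV M_R(w\cdot\Lambda)$, exhibiting the target as cyclic. To upgrade ``cyclic'' to ``free of rank one'', the key observation is that the Verma module $M_R(w\cdot\Lambda)$ is $R$-free as an $R$-module (indeed isomorphic to $U_R^-$ via the PBW basis), so the ambient $R$-module $\Hom_R(P_R(\Lambda),M_R(w\cdot\Lambda))$ is $R$-torsion-free, and hence so is its $U_R$-linear subspace $\bbV M_R(w\cdot\Lambda)$. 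A cyclic torsion-free module over the integral domain $R$ must be isomorphic to $R$.

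For general $M\in \calM_{R,X}$, I would choose a Verma flag $0=M_0\subset M_1\subset\cdots\subset M_n=M$ and apply the exactness of $\bbV$ (Lemma \ref{propertiesforfunctorV}(1)) to obtain short exact sequences
\[
0\rightarrow \bbV M_{k-1}\rightarrow \bbV M_k\rightarrow \bbV(M_k/M_{k-1})\rightarrow 0.
\]
By the previous step each quotient $\bbV(M_k/M_{k-1})\cong R$ is $R$-projective, so every such sequence splits over $R$; an induction on $k$ then yields $\bbV M\cong R^{\oplus n}$.

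For (2), applying \eqref{Proj-basechange} to $P_R(\Lambda)$ gives
\[
\bbV_{R,X}(M)\otimes_R R^\prime \cong \Hom_{\catO_{R^\prime}}\bigl(P_R(\Lambda)\otimes_R R^\prime,\,M\otimes_R R^\prime\bigr).
\]
Under the finer equivalence $\sim_{R^\prime}$, the class $X$ splits as $X=\sqcup_i X_i$, and both $\catO_{R^\prime}$ and the module $M\otimes_R R^\prime$ decompose accordingly into $X_i$-blocks, so the right-hand side breaks into $\bigoplus_i \Hom_{\catO_{R^\prime}}\bigl((P_R(\Lambda)\otimes_R R^\prime)_{X_i},(M\otimes_R R^\prime)_{X_i}\bigr)$. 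The crucial identification $(P_R(\Lambda)\otimes_R R^\prime)_{X_i}\cong P_{R^\prime}(\Lambda_i)$, for the antidominant weight $\Lambda_i\in X_i$ under $\sim_{R^\prime}$, can then be checked by comparing Verma-flag multiplicities via Lemma \ref{BGG-reciprocity} applied over both $R$ and $R^\prime$. Once this identification is established, the Hom decomposes into $\bigoplus_i\bbV_{R^\prime,X_i}(M\otimes_R R^\prime)$ as claimed.

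The main obstacle is verifying the decomposition $P_R(\Lambda)\otimes_R R^\prime\cong \bigoplus_i P_{R^\prime}(\Lambda_i)$ with multiplicity exactly one in each block; this comparison of BGG-reciprocity multiplicities across two deformation rings is the genuinely nontrivial step, since one must confirm that the antidominance of $\Lambda$ under $\sim_R$ transfers properly to each split block under $\sim_{R^\prime}$. By contrast, assertion (1) is essentially formal once the $R$-freeness of Verma modules is noted.
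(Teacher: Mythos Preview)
Your proposal is correct and follows exactly the route the paper indicates in its one-sentence pointer to \eqref{Proj-basechange}, Lemma~\ref{propertiesforfunctorV}(1), and the cyclicity argument from the proof of Lemma~\ref{propertiesforfunctorV}(2); your torsion-freeness step and the Verma-flag induction are precisely the details the paper leaves to the reader. Your identification of the decomposition $P_R(\Lambda)\otimes_R R'\cong\bigoplus_i P_{R'}(\Lambda_i)$ as the substantive step in~(2) is accurate, though note that $\Lambda$ itself lies in only one of the split blocks (where its antidominance is automatic since $X_i\subset X$); the full decomposition follows by comparing Verma-flag multiplicities once one observes that the antidominant Verma is simple over each residue field, so BGG reciprocity forces every Verma in the block to occur with multiplicity one in the antidominant projective on both sides.
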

	
	When $X$ consists of a single element, the functor $\bbV: \catO_{R, X} \rightarrow R\text{-}\mathrm{mod}$ is an equivalence of categories.
	If $X$ consists of two elements with $\Lambda\in X$ antidominant, let $X^\prime \in \rmT_\bbF/\sim_R$ be the class containing the single element $\Lambda^\prime$. Assume further that $\Lambda$ and $\Lambda^\prime$ are compatible and satisfy the condition \eqref{WeylCham}. Proposition~\ref{EndAlg-Antidom-Projs} then implies that $\calZ_{\calR, X^\prime} \hookrightarrow \calZ_{\calR, X}$.
	Define  $\Ind(\text{-}) = \Ind^{\calZ_{\calR,\Lambda}}_{\calZ_{\calR,\Lambda^\prime}}(\text{-}) =\calZ_{\calR,\Lambda}\otimes_{\calZ_{\calR,\Lambda^\prime}} (\text{-})$ the induction functor, and let $\Res(\text{-})= \Res_{\calZ_{\calR,\Lambda^\prime}}^{\calZ_{\calR,\Lambda}}(\text{-})$ denote the restriction functor.
	Then we have the following results.
	\begin{proposition}\label{Subgeneric-cases}
		Let $X \in \rmT_\bbF/\sim_R$ have two elements with $\Lambda\in X$ antidominant. Then 
		\begin{enumerate}[{\rm (1)}]
			\item For any $M, M^\prime \in \calM_{R,X}$, we have a natural isomorphism 
			\begin{align*}
				\xymatrixcolsep{3pc}\xymatrix{
					\Hom_{\calM_{R,X}}(M, M^\prime)  \ar[r]^-{\widesim{}} &  \Hom_{\calZ_{R,X}\text{-}\mathrm{mod}}(\bbV M, \bbV M^\prime) }
			\end{align*}
			\item Let $X^\prime \in \rmT_\bbF/\sim_R$ consists of one element $\Lambda^\prime$, and $\Lambda, \Lambda^\prime$ are compatible, and satisfy the condition \eqref{WeylCham}. Then there are isomorphisms of functors
			\[\xymatrix{\bbV_{R,X^\prime}\circ T_\Lambda^{\Lambda^\prime}\cong \Res\circ \bbV_{R,X}: \calM_{R,X} \ar[r] & \calZ_{R,X^\prime}\text{-}\mathrm{mod}}  
			\]
			and 
			\[\xymatrix{\bbV_{R,X}\circ T_{\Lambda^\prime}^{\Lambda}\cong \Ind\circ \bbV_{R,X^\prime}: \calM_{R,X^\prime} \ar[r] & \calZ_{R,X}\text{-}\mathrm{mod}.}  
			\]
		\end{enumerate}
	\end{proposition}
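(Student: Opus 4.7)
The plan exploits the explicit combinatorics of the subgeneric case $|X|=2$. Write $\Lambda_0:=s_{R,\alpha}^{\Lambda}\cdot\Lambda$ for the dominant member of $X$, so that the proof of Lemma \ref{Center-Localization}(2) produces both the short exact sequence
\[0\to M_R(\Lambda_0)\to P_R(\Lambda)\to M_R(\Lambda)\to 0\]
in $\calM_{R,X}$ and the pullback description $\calZ_{R,X}\cong R\times_\bbK R$ realized inside $R\oplus R$ by restriction of an endomorphism to each Verma subquotient. Applying the exact functor $\bbV=\Hom_{\catO_R}(P_R(\Lambda),-)$ (exactness by Lemma \ref{propertiesforfunctorV}(1)) to this sequence, together with Lemma \ref{propertiesforfunctorV}(2), pins down $\bbV M_R(\Lambda)$ and $\bbV M_R(\Lambda_0)$ up to canonical $\calZ_{R,X}$-module isomorphism: they are $\calZ_{R,X}/\calI_\Lambda\cong R$ and $\calI_\Lambda\cong \calZ_{R,X}/\calI_{\Lambda_0}\cong R$ respectively, each carrying a $\calZ_{R,X}$-action that factors through the corresponding projection $R\times_\bbK R\to R$. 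Moreover $M_R(\Lambda_0)=P_R(\Lambda_0)$ is itself projective, since no higher weight lies in $X$.

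For part (1), I would first observe that the map $\Hom_{\calM_{R,X}}(M,M^\prime)\to\Hom_{\calZ_{R,X}}(\bbV M,\bbV M^\prime)$ is tautologically an isomorphism when $M=P_R(\Lambda)$, by the Yoneda-type identification $\Hom_{\calZ_{R,X}}(\calZ_{R,X},\bbV M^\prime)\cong \bbV M^\prime$. A five-lemma argument applied to the two long Hom-sequences arising from the short exact sequence above then reduces the case $M=M_R(\Lambda_0)=P_R(\Lambda_0)$ to the previous case, together with the vanishings $\Hom_{\catO_R}(M_R(\Lambda),M_R(\Lambda_0))=0$ (no upward maps between Vermas) and $\Hom_{\calZ_{R,X}}(\bbV M_R(\Lambda),\bbV M_R(\Lambda_0))=0$, which are immediate from the explicit $\calI_{\bullet}$ description. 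Finally, an induction on the length of a Verma flag of $M$, using exactness of $\bbV$ and the five-lemma once more, extends the isomorphism to arbitrary $M,M^\prime\in\calM_{R,X}$.

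For part (2), the crucial input is that $\Stab_R(\Lambda)=\{e\}$ (as $\Lambda$ is regular in $X$) while $\Stab_R(\Lambda^\prime)=\Wch_{R,X}=\Wch_{R,X^\prime}$ (as $X^\prime$ is a singleton). Corollary \ref{App-tran-proj}(1) then yields $T_\Lambda^{\Lambda^\prime}P_R(\Lambda)\cong M_R(\Lambda^\prime)^{\oplus 2}$ (the absence of higher summands follows since $X^\prime=\{\Lambda^\prime\}$), and Lemma \ref{Tran-Verma} combined with preservation of projectives (Proposition \ref{prop-T}(3)) yields $T_{\Lambda^\prime}^\Lambda M_R(\Lambda^\prime)\cong P_R(\Lambda)$, since the resulting projective has a Verma flag with both $M_R(\Lambda)$ and $M_R(\Lambda_0)$ appearing once. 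Proposition \ref{EndAlg-Antidom-Projs} identifies the embedding $\calZ_{R,X^\prime}=R\hookrightarrow \calZ_{R,X}=R\times_\bbK R$ with the diagonal map $r\mapsto(r,r)$. Evaluating on these projective generators matches $\bbV_{R,X^\prime}(T_\Lambda^{\Lambda^\prime}P_R(\Lambda))=R^{\oplus 2}$ with $\Res\calZ_{R,X}=R\times_\bbK R$, free of rank $2$ over the diagonal $R$; and $\bbV_{R,X}(T_{\Lambda^\prime}^\Lambda M_R(\Lambda^\prime))=\calZ_{R,X}$ with $\Ind R=\calZ_{R,X}\otimes_R R=\calZ_{R,X}$. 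A devissage in $M$ (resp. $N$) analogous to that of part (1), using exactness of $T$, $\bbV$, $\Ind$, and $\Res$, propagates these object-wise identifications to the claimed natural isomorphisms of functors.

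The main obstacle is verifying naturality in part (2): one must check that the $\calZ_{R,X^\prime}$-action on $\bbV_{R,X^\prime}\circ T_\Lambda^{\Lambda^\prime}$ obtained by precomposition with endomorphisms of $P_R(\Lambda^\prime)$ agrees, under the object-wise identifications above, with the restriction of the $\calZ_{R,X}$-action via the diagonal embedding, and dually for induction through the $(T_{\Lambda^\prime}^\Lambda,T_\Lambda^{\Lambda^\prime})$-biadjunction of Proposition \ref{prop-T}(4). This is precisely where the explicit subgeneric description $R\times_\bbK R$ with uniformizer $x_\alpha$ is indispensable, because one has to trace the precomposition action summand-by-summand along the isomorphism $T_\Lambda^{\Lambda^\prime}P_R(\Lambda)\cong M_R(\Lambda^\prime)^{\oplus 2}$ and match it with the diagonal embedding. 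Once this compatibility is confirmed on a projective generator, the extension to all of $\calM_{R,X}$ and $\calM_{R,X^\prime}$ is formal, since all functors involved are additive and exact on the $R$-linear categories in question.
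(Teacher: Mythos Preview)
Your overall strategy—exploit the explicit two-element description of $X$, verify full faithfulness on a small set of generators, and propagate by d\'evissage—is exactly the approach the paper defers to (it simply cites Fiebig's Theorems~5 and~6). Part~(2) is handled well: the identifications $T_\Lambda^{\Lambda'}P_R(\Lambda)\cong M_R(\Lambda')^{\oplus 2}$ and $T_{\Lambda'}^{\Lambda}M_R(\Lambda')\cong P_R(\Lambda)$ are correct, the diagonal embedding $R\hookrightarrow R\times_\bbK R$ is the right picture, and you rightly flag the naturality check through the biadjunction as the crux.

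There is, however, a genuine gap in your five-lemma step for part~(1). From the short exact sequence $0\to M_R(\Lambda_0)\to P_R(\Lambda)\to M_R(\Lambda)\to 0$, applying $\Hom(-,M')$ gives
\[
0\to\Hom(M_R(\Lambda),M')\to\Hom(P_R(\Lambda),M')\to\Hom(M_R(\Lambda_0),M')\to\Ext^1(M_R(\Lambda),M')\to\cdots
\]
and Yoneda only controls the \emph{middle} term. To conclude anything about the third term you must also control the first term (for general $M'$, not just $M'=M_R(\Lambda_0)$) and the $\Ext^1$ term. Your stated vanishings are for the single target $M'=M_R(\Lambda_0)$, which does not suffice; and even there the cokernel $\Ext^1(M_R(\Lambda),M_R(\Lambda_0))\cong\bbK$ is nonzero on \emph{both} sides (the map $R\to R$ is multiplication by $x_\alpha$), so one cannot avoid comparing the induced map on $\Ext^1$. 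A cleaner route, and closer to Fiebig's argument, is: (i) observe that for $M,M'\in\calM_{R,X}$ both $\Hom_{\catO_R}(M,M')$ and $\Hom_{\calZ}(\bbV M,\bbV M')$ are torsion-free, hence free over the DVR $R$; (ii) check the comparison map is an isomorphism after base change to $\mathrm{Frac}(R)$, where the block splits into singletons; (iii) verify equality of ranks by computing all four Verma-to-Verma $\Hom$ spaces directly on both sides (they are $R,R,0,0$—your vanishing $\Hom(M_R(\Lambda),M_R(\Lambda_0))=0$ is correct, since over $\mathrm{Frac}(R)$ both deformed Vermas are simple and non-isomorphic), and then pass to arbitrary Verma flags by additivity of rank. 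Alternatively, classify the indecomposables of $\calM_{R,X}$ (there are exactly three: the two Vermas and $P_R(\Lambda)$) and check all nine $\Hom$ spaces by hand using the $R\times_\bbK R$ model. Either approach closes the gap without invoking any $\Ext$ comparison.
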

	\begin{proof}
		The proof is essentially identical to the arguments for Theorems 5 and 6 in \cite{Fie06}, where the case in which $X$ contains two elements is referred to as \emph{subgeneric}, while $X^\prime$ corresponds to the \emph{generic} case.
	\end{proof}

\end{document}